\newenvironment{customthm}[1]
  {\innercustomthm}
  {\endinnercustomthm}
\newtheorem{theorem}{Theorem}
\newtheorem*{theorem*}{Theorem}
\newtheorem{corollary}[theorem]{Corollary}
\newtheorem{lemma}[theorem]{Lemma}
\newtheorem{proposition}[theorem]{Proposition}
\newtheorem{defi}[theorem]{Definition}
\newtheorem{claim}[theorem]{Claim}
\newcommand*{\myproofname}{Proof}
\newenvironment{claimproof}[1][\myproofname]{\begin{proof}[#1]}{\end{proof}}
\newtheorem*{claim*}{Claim}
\newtheorem{examp}[theorem]{Example}
\newtheorem{question}[theorem]{Question}
\DeclareMathOperator{\DB}{DB}
\DeclareMathOperator{\SSS}{SS}
\DeclareMathOperator{\Sp}{Sp}
\DeclareMathOperator{\ST}{ST}
\newcommand{\sapm}{SAPM}
\newcommand{\apm}{APM}
\newcommand{\ksapm}{$k$-SAPM}
\newcommand{\kapm}{$k$-APM}
\newcommand*{\ceilfrac}[2]{\mathopen{}\left\lceil\frac{#1}{#2}\right\rceil\mathclose{}}
\newcommand*{\floorfrac}[2]{\mathopen{}\left\lfloor\frac{#1}{#2}\right\rfloor\mathclose{}}
\newcommand*{\abs}[1]{\lvert #1\rvert}
\title{Trees maximizing the number of almost-perfect matchings}
\author{Stijn Cambie \and Bradley McCoy \and Gunjan Sharma \and Stephan Wagner \and Corrine Yap}
\date{\today}
\begin{document}

\maketitle

\begin{abstract}

    We characterize the extremal trees that maximize the number of almost-perfect matchings, which are matchings covering all but one or two vertices, and those that maximize the number of strong almost-perfect matchings, which are matchings missing only one or two leaves. We also determine the trees that minimize the number of maximal matchings. We apply these results to extremal problems on the weighted Hosoya index for several choices of vertex-degree-based weight function.
\end{abstract}

\begin{section}
{Introduction}\label{sec:intro}
\end{section}

A matching in a graph $G$ is a set of edges such that no two edges share a vertex. Matching theory has a rich history with many applications. In particular, extremal problems and enumeration problems involving matchings have been studied extensively. A maximum matching can be the best solution to a variety of problems, such as scheduling and task assignment problems or matching donors with receivers~\cite{Gale-Shapley-1962,LP86}.

Computing the number of matchings in a graph, also called the Hosoya index, is \#P-complete in general. In this paper, we focus on enumeration of matchings in trees.
It is known that the number of matchings in the $n$-vertex path graph $P_n$ is equal to $F_n$, the $n$th Fibonacci number (where $F_1=1,F_2=2$), and that this is the maximum over all $n$-vertex trees~\cite{Gutman80}. In fact, the star and the path respectively minimize and maximize not only the number of matchings but also the number of induced matchings~\cite{KKKL17}, and the number of matchings of a fixed size~\cite[Thm.~4.7.3]{WH19}. In the last case, we show that the path is, in fact, the unique maximizer. For a more comprehensive background on matching enumeration problems, see e.g. the survey of Wagner and Gutman~\cite{WG10}, or the book by Wagner and Wang~\cite{WH19}.

To prove that the path uniquely maximizes the number of matchings of a fixed size, we require results on almost-perfect matchings which are of independent interest. 
Perfect matchings are central to the study of matching theory---Hall's matching theorem and Tutte's theorem are perhaps the most notable examples \cite{Hall1935, Tutte47}. However, the notion of a matching that is ``close to perfect'' has been less prevalent. When the number of vertices in a graph is odd, an almost-perfect matching is a matching that avoids a single vertex. This is also called a near-perfect matching, and enumeration problems for other classes of graphs (such as factor-critical graphs and grid graphs) have been considered~\cite{doroslovavcki2019, Liu02, perepechko2019counting}.
When the number of vertices is even, an almost-perfect matching avoids two vertices. We characterize the extremal trees in both cases and prove the following:

\begin{theorem}\label{thr:odd_apm}
    If $n$ is odd, a tree $T$ of order $n$ has at most $\frac{n+1}{2}$ almost-perfect matchings.
    Equality holds if and only if $T$ is a $1$-subdivision of a tree of order $\frac{n+1}{2}.$
\end{theorem}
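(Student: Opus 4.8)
The plan is to reduce the whole problem to counting a single combinatorial quantity. The starting point is the fact that a forest has \emph{at most one} perfect matching (a leaf forces the edge to its neighbor, and one removes that pair and induces). Since $n$ is odd, an almost-perfect matching misses exactly one vertex $u$ and is precisely a perfect matching of the forest $T-u$; by uniqueness, these matchings are in bijection with the set $G=\{u\in V(T): T-u \text{ has a perfect matching}\}$ of ``good'' vertices. So it suffices to prove $|G|\le \frac{n+1}{2}$, with equality exactly for $1$-subdivisions.

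I would next show that $G$ is an independent set. If $u,u'\in G$ were adjacent, let $M,M'$ be the (near-perfect) matchings missing $u,u'$ respectively. In the symmetric difference $M\mathbin{\triangle}M'$, viewed as a subgraph of $T$, the only vertices of odd degree are $u$ and $u'$, so it contains a path joining them; but the unique path between the adjacent vertices $u,u'$ in a tree is the edge $uu'$, which lies in neither $M$ nor $M'$ (each exposes one of its endpoints), a contradiction. With independence in hand the bound is immediate: if $G=\emptyset$ we are done, and otherwise I fix $u_0\in G$ together with a near-perfect matching $M_0$ missing $u_0$; its $\frac{n-1}{2}$ edges cover $V(T)\setminus\{u_0\}\supseteq G\setminus\{u_0\}$, and since $G$ is independent each edge covers at most one good vertex, so $|G\setminus\{u_0\}|\le\frac{n-1}{2}$ and $|G|\le\frac{n+1}{2}$.

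The engine for the equality case is the standard augmenting-path characterization: $v\in G$ if and only if the unique $u_0$-to-$v$ path in $T$ is $M_0$-alternating, beginning with a non-matching edge (equivalently, ending with a matching edge incident to $v$). Equality forces every $M_0$-edge to contain exactly one good vertex; write $B=G$ and $W=V(T)\setminus G$. I would first prove $W$ is independent: an edge $w_1w_2$ inside $W$ has $M_0$-partners $a_1,a_2\in B$ with $a_1w_1,a_2w_2\in M_0$, and removing $w_1w_2$ splits $T$ with $u_0$ on one side, say $w_1$'s; the alternating path to $a_1$ forces the edge entering $w_1$ to be non-matching, after which the path to $a_2$ continues through the non-matching edge $w_1w_2$, producing two consecutive non-matching edges and contradicting $a_2\in B$. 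A parallel alternating-path argument rules out any leaf of $T$ lying in $W$. Now $B$ and $W$ are both independent, so $T$ is bipartite with parts of sizes $\frac{n+1}{2}$ and $\frac{n-1}{2}$; counting edges gives $\sum_{w\in W}\deg(w)=n-1=2|W|$, and since every $w\in W$ has degree at least $2$, every such $w$ has degree exactly $2$. Contracting these degree-$2$ vertices of $W$ then exhibits $T$ as the $1$-subdivision of a tree on $B$, which has order $\frac{n+1}{2}$.

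For the converse I would check directly that a $1$-subdivision $S(H)$ attains equality: deleting a subdivision vertex leaves two odd components and hence no perfect matching, while deleting an original vertex $u$ leaves a forest with a perfect matching obtained by rooting $H$ at $u$ and matching each subdivision vertex to the endpoint of its edge farther from $u$; thus $G$ is exactly the set of original vertices and $|G|=|V(H)|=\frac{n+1}{2}$. The routine steps here are the bijection $m_1(T)=|G|$ and the counting bound; I expect the main obstacle to be the forward direction of the equality analysis, namely proving that $W$ is independent and leaf-free, where the alternating-path characterization of $G$ carries the decisive weight.
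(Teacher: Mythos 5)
Your proposal is correct, but it follows a genuinely different route from the paper. The paper gets the upper bound in one line from parity of the bipartition: an avoided vertex $v$ must lie in the larger class $X$ (else $T\setminus v$ has unequal bipartition classes and no perfect matching), and $|X|=\frac{n+1}{2}$; it then handles equality by induction, showing all leaves lie in $X$ and no two share a neighbor, deleting a diametral leaf together with its degree-$2$ neighbor, and applying the inductive hypothesis to conclude the smaller tree (and hence $T$) is a $1$-subdivision. You instead work non-inductively with the set $G$ of good vertices: independence of $G$ via the symmetric-difference argument, the covering bound via a fixed near-perfect matching $M_0$, and then a direct structural analysis of the equality case (complement $W$ independent and leaf-free via alternating paths, degree count forcing every $W$-vertex to have degree exactly $2$, then suppression of those vertices). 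Your equality analysis is arguably more informative, since it reconstructs the subdivided tree explicitly rather than recursively, and it implicitly recovers the bipartition $B\cup W$ that the paper starts from; the price is length, plus one step taken on faith — the ``standard augmenting-path characterization'' of good vertices. That citation is not a gap: for trees it follows from exactly the symmetric-difference argument you already use for independence of $G$ (the components of $M_0\mathbin{\triangle}M_v$ are paths, and the one joining $u_0$ to $v$ is the unique tree path, alternating with the stated boundary behavior), but a fully self-contained writeup should include that half-paragraph. Two small points to tighten: when suppressing the degree-$2$ vertices of $W$, note that two of them cannot join the same pair of $B$-vertices (that would create a cycle), so the contracted graph is a simple tree; and your converse computation matches the paper's construction almost verbatim, so that part is essentially the same.
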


\begin{theorem}\label{thr:even_apm}
	If $n$ is even, a tree $T$ of order $n$ has at most $\binom{ \frac{n}{2}+1}{2}=\frac{n(n+2)}{8}$ almost-perfect matchings.
	\begin{itemize}
	\item For $n > 4$, equality holds if and only if $T$ is the path $P_n$.
	\item For $n=4$, equality holds for both $P_4$ and $S_4$.
    \end{itemize}
\end{theorem}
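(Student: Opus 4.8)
The plan is to prove the inequality for the larger class of forests, since deleting a vertex from a tree yields a forest. For a forest $F$ on an even number of vertices write $\alpha(F)$ for the number of almost-perfect matchings (matchings of size $\tfrac{|V(F)|}{2}-1$), and for a forest $F'$ of odd order write $\beta(F')$ for the number of near-perfect matchings (matchings of size $\tfrac{|V(F')|-1}{2}$). Fixing a leaf $\ell$ with neighbour $v$ and splitting according to whether $\ell$ is covered gives the identity $\alpha(F)=\alpha(F-\{\ell,v\})+\beta(F-\ell)$: a covered $\ell$ must be matched to $v$, while an uncovered $\ell$ is one of the two exposed vertices. I first record a forest version of the odd bound: a forest $F'$ of odd order $m'$ satisfies $\beta(F')\le\tfrac{m'+1}{2}$. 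Indeed, $F'$ has exactly one odd component $C_0$, all other components must be perfectly matched, and a forest has at most one perfect matching; hence $\beta(F')\le\tfrac{|C_0|+1}{2}\le\tfrac{m'+1}{2}$ by Theorem~\ref{thr:odd_apm} applied to the tree $C_0$. Feeding this and the inductive hypothesis into the identity yields $\alpha(F)\le\binom{m/2}{2}+\tfrac{m}{2}=\binom{m/2+1}{2}$, with the tree case as a specialisation.

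For equality, suppose a tree $T$ of even order $n$ attains $\binom{n/2+1}{2}$. For any leaf $\ell$ the identity forces both summands to be maximal, so $\beta(T-\ell)=\tfrac{n}{2}$ for \emph{every} leaf $\ell$; as $T-\ell$ is a genuine tree of odd order $n-1$, Theorem~\ref{thr:odd_apm} says $T-\ell$ is the $1$-subdivision of a tree of order $\tfrac n2$. I will exploit the rigidity of $1$-subdivisions: such a tree is bipartite, one colour class consists entirely of degree-$2$ vertices, and this class is smaller than the other by exactly one. Writing $(A,B)$, $|A|\ge|B|$, for the bipartition of $T$, deleting a leaf $\ell\in A$ leaves classes of sizes $|A|-1$ and $|B|$, which must differ by exactly one; this forces $|A|-|B|\in\{0,2\}$ and identifies the degree-$2$ class. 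Since a leaf in $A$ has its neighbour in $B$, this subdivision condition translates into explicit degree constraints on $T$ itself.

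It then remains to treat two cases. If $|A|=|B|$, the constraint says that for every leaf $\ell\in A$ all other vertices of $A$ have degree $2$, so $A$ contains at most one leaf, and symmetrically so does $B$; as every tree has at least two leaves, $T$ has exactly two and is the path $P_n$. If $|A|-|B|=2$, then no leaf can lie in $B$ (a $B$-leaf would force balance), so all leaves lie in $A$; comparing the degree constraints coming from two distinct leaves shows they must share a common neighbour, and a tree all of whose leaves are adjacent to a single vertex is a star, which has $|A|-|B|=2$ only for $S_4$. This gives $T=P_n$ for $n>4$ and $T\in\{P_4,S_4\}$ for $n=4$; sufficiency follows from the direct counts that $P_n$ has $\binom{n/2+1}{2}$ and $S_4$ has $3$ almost-perfect matchings. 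I expect the main obstacle to be precisely this equality analysis: extracting, from the single scalar equality $\beta(T-\ell)=\tfrac n2$ holding for all leaves, enough rigidity to pin down $T$, where the bipartition-and-degree bookkeeping—especially ruling out the imbalanced case for $n>4$—requires the most care.
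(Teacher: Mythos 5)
Your proof is correct, but it follows a genuinely different route from the paper's, in two respects. First, for the upper bound, the paper stays inside the class of trees: it invokes Lemma~\ref{lem:leavesatdistance2=>n_bound} to rule out sibling leaves and then takes a diameter endpoint to obtain a leaf $u$ whose neighbour $v$ has degree $2$, so that $T\setminus\{u,v\}$ is again a tree and induction applies; you instead prove the bound for all forests, which makes the recursion $\alpha(F)=\alpha(F-\{\ell,v\})+\beta(F-\ell)$ available at \emph{any} leaf and renders both the sibling-leaves lemma and the diameter argument unnecessary. Second, and more substantially, your equality analysis is non-inductive: from the fact that $\beta(T-\ell)=\tfrac n2$ for \emph{every} leaf $\ell$, you apply the equality case of Theorem~\ref{thr:odd_apm} at each leaf and extract the bipartition/degree rigidity of $1$-subdivisions to pin down $T\in\{P_n\}\cup\{S_4\}$ directly, with the $S_4$ exception for $n=4$ emerging naturally from the case $\abs{A}-\abs{B}=2$; the paper instead characterizes equality through the induction itself (forcing $T\setminus\{u,v\}=P_{n-2}$ and $T\setminus u$ a $1$-subdivision, with $\deg(v)=2$), which requires hand/computer verification of the base cases $n=4,6$. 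Your approach buys uniformity in $n$ and fewer auxiliary lemmas; the paper's buys a shorter equality argument once its Lemma~\ref{lem:leavesatdistance2=>n_bound} is in place (that lemma is also reused later in Section~\ref{sec:sapm}, which your route would not provide). Two small points you should patch when writing this up: the claim that an odd-order forest ``has exactly one odd component'' is not literally true (it has an odd number of them); you should say that if there are three or more odd components then $\beta(F')=0$ and the bound is trivial, and otherwise argue as you do. Also, the forest induction needs its (trivial) base cases stated: edgeless forests have $\alpha=0$ unless $m=2$, and $\alpha=1=\binom{2}{2}$ when $m=2$.
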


We consider the same questions for strong almost-perfect matchings, which are matchings that cover all vertices except one or two leaves, and again find precise characterizations of the extremal graphs. When the order is odd, the number of strong almost-perfect matchings is maximized by the basic spider (a star with all but at most one edge subdivided once, as presented in Figure~\ref{fig:spiders}); this is the same construction that, for example, maximizes the number of maximal independent sets~\cite{Wilf86}.

\begin{theorem}\label{thr:sapmcombined}
Let $T$ be a tree of order $n\ge 28$.
Then the number of strong almost-perfect matchings in $T$ is at most $\frac{n-1}{2}$ if $n$ is odd and $\floorfrac{(n-4)^2}{12}$ if $n$ is even.
Equality holds if and only if $T$ is a spider (when $n$ is odd) or a balanced spider-trio (when $n$ is even).
\end{theorem}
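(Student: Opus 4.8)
The engine behind both parities is the classical fact that a forest has at most one perfect matching: a perfect matching is forced to pair each leaf with its unique neighbour, and peeling off these leaf-edges one at a time determines the matching completely. Hence a strong almost-perfect matching is determined by the set of leaves it misses, and writing $s(T)$ for the number of strong almost-perfect matchings of $T$, I would first record
\[
s(T)=\#\{\text{leaves }\ell: T-\ell\text{ has a perfect matching}\}\qquad(n\text{ odd}),
\]
\[
s(T)=\#\bigl\{\text{leaf-pairs }\{\ell_1,\ell_2\}: T-\{\ell_1,\ell_2\}\text{ has a perfect matching}\bigr\}\qquad(n\text{ even}).
\]
Next I would impose the bipartite parity constraint: fix the proper $2$-colouring with colour classes $A,B$. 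Deleting one vertex can equalise the classes only if it lies in the larger class and $\bigl||A|-|B|\bigr|=1$; deleting a pair can equalise them only if $|A|=|B|$ (then one leaf from each class) or $\bigl||A|-|B|\bigr|=2$ (then both leaves in the larger class), and $s(T)=0$ once the imbalance exceeds these thresholds. This already localises every removable leaf.

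The odd case then follows quickly. We may assume $|A|-|B|=1$, so $|A|=\tfrac{n+1}{2}$, as otherwise $s(T)=0$; all removable leaves lie in $A$, and being leaves they give
\[
s(T)\le\#\{\text{leaves in }A\}\le|A|-1=\tfrac{n-1}{2},
\]
where the last inequality uses that $A$ contains a non-leaf (if every vertex of $A$ were a leaf then $|B|=1$ and $|A|=n-1$, contradicting $|A|=\tfrac{n+1}{2}$ for $n\ge 5$). For equality I would trace both inequalities: $A$ must have exactly one non-leaf $c$; counting edges forces $\deg(c)=|B|$, so $c$ is adjacent to all of $B$; and demanding that \emph{every} leaf of $A$ be removable then forces each vertex of $B$ to carry exactly one pendant leaf, which is precisely the spider with all legs subdivided once.

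The even case is the substantial part, and I expect its upper bound together with the equality analysis to be the main obstacle. One must handle both the balanced regime $|A|=|B|$ (removable pairs use one leaf from each class) and the imbalanced regime $\bigl||A|-|B|\bigr|=2$ (both leaves in the larger class), and show that the latter wins. A first crude bound comes from the odd result: for each leaf $\ell$ the tree $T-\ell$ has odd order $n-1\ge 27$, so by the odd bound above it has at most $\tfrac{n-2}{2}$ removable leaves, whence $s(T)\le\tfrac12\cdot(\#\text{leaves})\cdot\tfrac{n-2}{2}$. This is far from tight once the number of leaves is large, because leaves sharing a common stem can never be freed simultaneously, and quantifying this scarcity is exactly what the sharp bound must capture.

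The heart of the argument is to show how severely removability restricts the structure, and to read off the trio. The guiding principle (verified on the extremal family) is that in any near-perfect matching a high-degree vertex $z$ is matched across a single branch, while in every other branch its local star-centre is forced to consume a stem, orphaning a leaf that must then be one of the two deleted leaves; since only two deletions are available, at most two branches besides the matched one can be sustained, so the relevant central vertex has degree exactly three. In the extremal configuration this makes $T$ a central vertex joined to three subdivided stars with leaf-counts $t_1,t_2,t_3$, and each removable pair takes one orphaned leaf from each of the two non-matched branches, giving $s(T)=\sum_{p<q}t_pt_q$. For fixed total number of leaves this is maximised by equalising the $t_i$, producing the balanced spider-trio and the value $3t^2=\floorfrac{(n-4)^2}{12}$, with the floor absorbing the unavoidable imbalance when $6\nmid(n-4)$. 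The hard part will be converting this principle into a valid upper bound over \emph{all} trees (not just this family), pinning down equality, and ruling out the balanced regime — where the best construction, a double spider with $a$ and $b$ length-two legs on adjacent centres, yields only $ab\le\tfrac{(n-2)^2}{16}<\floorfrac{(n-4)^2}{12}$; the threshold $n\ge 28$ is what guarantees this strict comparison and disposes of the small degenerate cases.
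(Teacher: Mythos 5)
Your reduction of the count to ``removable'' leaves and leaf-pairs (via uniqueness of perfect matchings in forests) is correct, and your odd-case argument is complete and valid for $n\ge 7$; it even takes a slightly different route from the paper, which first disposes of two leaves sharing a neighbour (at most two {\sapm}s) and otherwise compares leaves with internal vertices, whereas you count leaves inside the larger bipartition class directly and force the spider from removability. That part is fine.

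The even case is where the theorem actually lives, and there your proposal has a genuine gap that you flag yourself: the passage from your ``guiding principle'' to an upper bound valid for \emph{all} trees is missing, and the principle's key conclusion (``the relevant central vertex has degree exactly three'') is never derived. The paper obtains that vertex by a mechanism quite different from your branch-propagation heuristic: writing $X\cup Y$ for the bipartition with $\abs{X}=\abs{Y}+2$, it splits on whether $Y$ contains a leaf. If $Y$ has no leaf, then every vertex of $Y$ has degree at least $2$, and since $\abs{Y}=\frac{n-2}{2}$ while $\sum_{v\in Y}\deg(v)=n-1$, pure degree counting forces exactly one vertex $z\in Y$ of degree $3$ and all others of degree $2$; then $T\setminus z$ has three odd components, no avoided pair can lie inside a single component, so the count is at most $ab+bc+ca$ in the component leaf-counts, and AM--GM together with the odd theorem applied componentwise (to force each component to be a spider) finishes. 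If $Y$ does contain a leaf, every \sapm{} must use its pendant edge, and the paper closes this case by induction on $n$, with base cases $n\le 16$ settled by computer --- this case is entirely absent from your outline. Finally, you misattribute the role of $n\ge 28$: the balanced regime $\abs{X}=\abs{Y}$ is already strictly beaten for $n\ge 20$ (it gives at most $\floorfrac{(n-2)^2}{16}$); the real competitor near the threshold is the degenerate trio $\ST_{\ceilfrac{n-4}{4},\floorfrac{n-4}{4},0}$, whose count is $ab+a+b=\floorfrac{n^2}{16}-1$ because pairs may also use the pendant leaf at the centre, and $\floorfrac{n^2}{16}-1\le\floorfrac{(n-4)^2}{12}$ holds precisely when $n\ge 28$. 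Your formula $s(T)=\sum_{p<q}t_pt_q$ does not see those extra pairs, which is a symptom of the missing case analysis rather than a cosmetic slip.
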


In fact, we are able to give a complete picture of the bounds and extremal trees for all values of $n$, but postpone the precise statements until Section~\ref{sec:sapm}. 

The trees that maximize the number of maximal matchings have been previously determined by G\'{o}rska and Skupie\'{n}~\cite{GS07}.
Surprisingly, the characterization of the trees that minimize the number of maximal matchings has not appeared in the literature.
We show that the extremal trees are precisely the most basic spiders again, or a special spider if $n$ is odd. 

\begin{theorem} \label{thr:minmaxmatchings}
   A tree $T$ of order $n$ has at least $\lceil \frac n2 \rceil $ maximal matchings. 
    Equality holds if and only if $T$ is a spider $\Sp_n$, or an odd special spider $\SSS_n$.
\end{theorem}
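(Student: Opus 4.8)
Write $\mu(T)$ for the number of maximal matchings of a tree (or forest) $T$; recall that a matching is maximal exactly when its set of exposed vertices is independent, and that $\mu$ is multiplicative over the connected components of a forest. The plan is to prove the bound $\mu(T)\ge\lceil n/2\rceil$ by strong induction on $n$, driven by a recursion taken at the end of a longest path, and then to read off the equality cases from that same recursion. After checking the small orders directly, I fix a longest path and let $v_0$ be one endpoint, $v_1$ its neighbour, and $v_2$ the next vertex. Since $v_1$ has a leaf neighbour, maximality forces $v_1$ to be covered (otherwise $v_0$ and $v_1$ would both be exposed along the edge $v_0v_1$); and by maximality of the path, every neighbour of $v_1$ other than $v_2$ is a leaf. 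I write $L$ for this set of $d-1$ leaves, where $d=\deg(v_1)$.

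Splitting the maximal matchings of $T$ according to the partner of $v_1$ yields the recursion
\[
 \mu(T)=\mu(R_1)+(d-1)\,\mu(R_2),
\]
where $R_2=T-\{v_1\}-L$ is a tree of order $n-d$ and $R_1=R_2-v_2$ is a forest of order $n-d-1$: matching $v_1$ to $v_2$ leaves the exposed leaves of $L$ harmlessly uncovered and produces the maximal matchings of $R_1$, while matching $v_1$ to any one of the $d-1$ leaves of $L$ produces, for each choice, the maximal matchings of $R_2$. To make the induction go through on the forest $R_1$, I would prove the slightly stronger claim that every forest $F$ satisfies $\mu(F)\ge\prod_{C}\lceil|C|/2\rceil$ over its components $C$, which follows from the tree statement by multiplicativity and whose equality case is componentwise. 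Substituting the inductive estimates $\mu(R_2)\ge\lceil(n-d)/2\rceil$ and $\mu(R_1)\ge\prod_C\lceil|C|/2\rceil\ge 1$ into the recursion and verifying the elementary inequality $1+(d-1)\lceil(n-d)/2\rceil\ge\lceil n/2\rceil$ for all $d\ge 2$ then delivers the lower bound.

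The substantial part, and where I expect the real difficulty, is the equality analysis. Equality in $\mu(T)=\lceil n/2\rceil$ forces equality in every estimate used: the product $\prod_C\lceil|C|/2\rceil$ must equal $1$, so each component of $R_1$ has order $1$ or $2$; the tree $R_2$ must itself be extremal, hence a spider $\Sp$ or an odd special spider $\SSS$ by induction; and the arithmetic $1+(d-1)\lceil(n-d)/2\rceil=\lceil n/2\rceil$ must hold, which constrains $d$ and the parity of $n$ very tightly. The hard step is to turn these local constraints into a global conclusion: knowing that all the bushes hanging at $v_2$ are paths on one or two vertices and that $R_2$ is a spider, I must reassemble $T$ and show that the only survivors are exactly the spiders $\Sp_n$ and, for odd $n$, the special spiders $\SSS_n$ (those with precisely two unsubdivided legs at the centre), excluding every other way of gluing. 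Because a given extremal tree may admit several valid choices of longest-path endpoint --- e.g. one may land on a leg-midpoint with $d=2$ or on the centre with $d\ge 3$ --- this reconstruction splits into cases according to $d=2$ versus $d\ge 3$, and one must also handle the degenerate small orders and the fact that for small odd $n$ the two extremal families coincide. That bookkeeping is the bulk of the proof; the lower bound itself is comparatively routine.
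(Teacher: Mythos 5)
Your lower-bound argument is correct and complete, and it takes a genuinely different route from the paper. The paper also works at the end of a diameter, but instead of an exact count it splits into two cases---either $v_1$ has a second leaf neighbour, or $\deg(v_1)=2$---deletes just two vertices to obtain a tree $T'$ of order $n-2$, and shows via an injective extension argument that $T$ has strictly more maximal matchings than $T'$, so $\mu(T)\ge \mu(T')+1\ge\lceil n/2\rceil$. Your identity $\mu(T)=\mu(R_1)+(d-1)\mu(R_2)$ subsumes both of the paper's cases at once, and for $d=2$ it sharpens the paper's inequality to the exact statement $\mu(T)=\mu(T')+\mu(T'-v_2)$; this exactness is precisely what makes the equality analysis mechanical later. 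Your elementary inequality does hold for all $2\le d\le n-1$: since $\lceil n/2\rceil\le\lceil (n-d)/2\rceil+\lceil d/2\rceil$, it reduces to $1+(d-2)\lceil (n-d)/2\rceil\ge \lceil d/2\rceil$, which follows from $1+(d-2)\ge\lceil d/2\rceil$ for $d \geq 2$.

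The one real shortfall is that you never execute the equality analysis, which is half of the theorem---and you substantially overestimate the bookkeeping it requires, since your three necessary conditions close it out quickly. The arithmetic condition $1+(d-1)\lceil(n-d)/2\rceil=\lceil n/2\rceil$ forces $d=2$ once $n\ge 6$: for $d\ge 3$, comparing $1+(d-1)\tfrac{n-d}{2}$ with $\tfrac{n+1}{2}$ gives $(d-2)(n-d)\le d-1$, hence $n-d\le 2$, and the only surviving solution is $n=5$, $d=3$, which is exactly $\SSS_5$. With $d=2$, induction gives $R_2\in\{\Sp_{n-2},\SSS_{n-2}\}$, so $T$ is $R_2$ with a length-$2$ leg attached at $v_2$; the condition that every component of $R_1=R_2-v_2$ has order at most $2$ then pins $v_2$ to the spider's centre once $n\ge 8$ (deleting any other vertex of $\Sp_{n-2}$ or $\SSS_{n-2}$ leaves a component of order at least $3$), giving precisely $T\in\{\Sp_n,\SSS_n\}$; both attain the bound, e.g.\ by your own recursion. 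Note that the paper organizes this step differently: from extremality of $T'$ it deduces that $T$ retains a leg $L$ of length $2$ and re-applies the induction to $T\setminus L$; but it must still (and only rather tersely does) rule out attaching the leg anywhere other than the centre, which is the same finishing argument sketched above. So your plan is sound and no step in it would fail, but as written it is a proof of the inequality together with a promissory note for the characterization; the few sentences above, plus the small-order checks ($n\le 7$, which the paper does by computer), are what is owed.
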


In Section~\ref{sec:HosoyaIndex}, we consider applications to the weighted Hosoya index $Z_\phi(T)$, where each edge is assigned a weight by the function $\phi$ and the weight of a matching is the product of its edge weights. Motivated by recent results by Cruz, Gutman, and Rada~\cite{CRUZ2022}, we consider certain choices of vertex-degree-based weight functions and prove that while the minimizer is the star in all cases that we consider, the maximizer can be the path, the double-broom, or a type of spider.

The outline of the paper is as follows: we start with mentioning some basics in Section~\ref{subsec:def_not}. In Section~\ref{sec:ExtrTrees_mk}, we prove Theorems~\ref{thr:odd_apm} and~\ref{thr:even_apm}. In Section~\ref{sec:sapm}, we prove Theorem~\ref{thr:sapmcombined}. In Section~\ref{sec:minmaxmatching}, we prove Theorem~\ref{thr:minmaxmatchings}. In Section~\ref{sec:HosoyaIndex}, we prove results related to the weighted Hosoya index. We conclude by discussing some further questions to pursue in Section~\ref{sec:future}.

\section{Preliminaries}\label{subsec:def_not}

In this section, we introduce definitions and notation that will be used throughout the paper.
Given a graph $G=(V,E)$, a \emph{matching} $M$ in $G$
is a set of edges such that no two edges share a common
vertex. A matching is \emph{maximal} if it is not a subset of any other matching.
A matching is a \emph{maximum matching} if it has the largest possible cardinality.
A \emph{perfect} matching is a matching that includes all vertices. 

The subgraph $G[V']$ of $G=(V,E)$ induced by a set $V'\subset V$ is the graph with vertex set $V'$ and edge set $E \cap \binom{V'}{2}.$
Two vertices $u,v$ are adjacent (neighbors of each other) if $uv=(u,v) \in E$. This adjacency will also be denoted by $u \sim v.$

The {\em diameter} of a graph $G$, 
denoted $diam(G)$, is the maximum distance between any pair of vertices over all pairs in $G$. When we say ``a diameter of $G$,'' we mean a path with length equal to $diam(G)$. A {\em $1$-subdivision of an edge} $uv$ is obtained by adding a new vertex $x_{uv}$ and replacing $uv$ with the path $\{u, x_{uv}, v\}$. A {\em $1$-subdivision of a graph} is a graph all of whose edges are ($1$-)subdivided.

\begin{defi}
    In a tree of order $n$, a matching $M$ is an {\em almost-perfect matching} (\apm) if it covers all but one vertex when $n$ is odd and all but two vertices when $n$ is even. The vertices not covered by $M$ are called the {\em avoided vertices}. $M$ is a {\em strong almost-perfect matching} (\sapm) if the avoided vertices are leaves. 
\end{defi}

Let $\mathcal M(G)$ be the set of all matchings of $G$, $\mathcal M_k(G)$ be the set of matchings in $G$ of size $k$, and  $m_k(G) = |\mathcal M_k(G)|$ with $m_0(G) = 1$. 
The \emph{Hosoya index} of $G$ is the number of matchings of $G$, which can be written as 
$$Z(G)= |\mathcal M(G)| = \Sigma_{k\geq0} m_k(G).$$ 
The name of the index refers to Hosoya~\cite{HOSOYA71} who observed that if $G$ is a molecular graph, the Hosoya index is correlated with its chemical properties.

Let $(G,\omega)$ denote a graph with weight function
$\omega:E\to\mathbb{R}^+$. The weight of a matching $M$ is then the product of the weights of the edges in $M$, 
and the \emph{weighted Hosoya index} is
$$Z(G, \omega) = \sum_{M \in \mathcal{M}(G)}\left(\prod_{e \in M} \omega(e)\right).$$

Note that $\emptyset \in \mathcal{M}(G)$ for all $G$; by convention we take the weight of the empty matching to be $1$, so when viewed as a polynomial in $\{\omega(e)\}_{e \in E(G)}$, the weighted Hosoya index always has constant term $1$. 
As an aside, observe that this coincides with the definition of the (multivariate) monomer-dimer partition function from statistical physics when $\omega(e)$ is a positive constant $\lambda_e$ for each edge $e$. 

Let $e=uv$ with $\deg(u)=i$
and $\deg(v)=j$. The weight function
is \emph{vertex-degree-based} if $\omega(e)=\phi(i,j)$
where $\phi$ is a function such that $\phi(i,j) = \phi(j,i)$.
In this case, we denote $Z(G,\omega)$ by $Z_{\phi}(G).$

We finish this section with some statements that we will use in our proofs throughout. We will refer to the fact $\sum_{v \in V(G)} \deg(v) = 2|E(G)|$ as the ``handshaking lemma.'' The following is also a well-known statement:

\begin{lemma}\label{lem:treepm}
Every tree has at most one perfect matching.
\end{lemma}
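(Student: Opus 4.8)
The plan is to argue by contradiction using the symmetric difference of two perfect matchings. I would suppose that a tree $T$ admits two \emph{distinct} perfect matchings $M_1$ and $M_2$, and then study the subgraph $H$ of $T$ whose edge set is the symmetric difference $M_1 \triangle M_2 = (M_1 \setminus M_2) \cup (M_2 \setminus M_1)$. The goal is to show that $H$ must contain a cycle, which is impossible inside a tree.

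The key step is a degree count at each vertex of $H$. Since $M_1$ and $M_2$ are both perfect, every vertex $w$ is incident to exactly one edge of $M_1$ and exactly one edge of $M_2$. If these two edges coincide then $w$ has degree $0$ in $H$; otherwise $w$ has degree exactly $2$, being incident to one edge of $M_1 \setminus M_2$ and one of $M_2 \setminus M_1$. Hence every vertex of $H$ has degree $0$ or $2$, so each connected component of $H$ is either an isolated vertex or a cycle.

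To finish, I would observe that $M_1 \ne M_2$ forces $H$ to contain at least one edge, and therefore at least one component that is a cycle. But $H$ is a subgraph of $T$, which is acyclic, giving the desired contradiction; thus $M_1 = M_2$ and $T$ has at most one perfect matching. There is essentially no real obstacle here — the only point needing care is the degree bookkeeping that pins down each component of the symmetric difference as a cycle.

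As an equally clean alternative, one can induct on $|V(T)|$: the statement is vacuous when $|V(T)|$ is odd (no perfect matching exists), and otherwise a leaf $v$ must be matched to its unique neighbour $u$ in \emph{every} perfect matching, so the edge $uv$ is forced. Deleting $u$ and $v$ leaves a forest whose components are trees of strictly smaller order, each with at most one perfect matching by the inductive hypothesis, so the whole matching is uniquely determined. I would present the symmetric-difference argument as the primary proof, since it avoids the mild bookkeeping of reducing to a forest in the inductive step.
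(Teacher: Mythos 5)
Your proof is correct and follows essentially the same route as the paper, which overlays the two matchings (the paper uses the union $M \cup M'$, you use the symmetric difference $M_1 \triangle M_2$, a purely cosmetic difference) and concludes that distinct perfect matchings would force a cycle in the tree. Your degree bookkeeping is accurate, and the inductive alternative you sketch is also valid, so there is nothing to fix.
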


\begin{proof}
    Suppose $M$ and $M'$ are two perfect matchings of $T$. In the graph $(V(T), M \cup M')$, every component must be either a single edge (contained in both $M$ and $M'$) or a cycle. As $T$ is a tree, it contains no cycles and so we must have $M = M'$.
\end{proof}

For two non-increasing sequences $x_1 \geq x_2 \geq \cdots \geq x_k$ and $y_1 \geq y_2 \geq \cdots \geq y_k$, we say $(x_i)_{i=1}^k$ {\em majorizes} $(y_i)_{i=1}^k$ if and only if  $\sum_{i=1}^j x_i \geq \sum_{i=1}^j y_i$ for every $1 \le j \le k$ and equality holds for $j=k$.
Analogously, for two non-decreasing sequences $x_1 \leq x_2 \leq \cdots \leq x_k$ and $y_1 \leq y_2 \leq \cdots \leq y_k$, we say $(x_i)_{i=1}^k$ {\em majorizes} $(y_i)_{i=1}^k$ if and only if  $\sum_{i=1}^j x_i \leq \sum_{i=1}^j y_i$ for every $1 \le j \le k$ and equality holds for $j=k$.

Observe that if $(x_i)_{i=1}^k$ and  $(y_i)_{i=1}^k$ are non-increasing, then the sequences $(x_i')_{i=1}^k$ and $(y_i')_{i=1}^k$ defined by $x_i' = x_{k-i+1}$ and $y_i' = y_{k-i+1}$ are non-decreasing, and $(x_i)_{i=1}^k$ majorizes $(y_i)_{i=1}^k$ if and only if $(x_i')_{i=1}^k$ majorizes $(y_i')_{i=1}^k$.


Our main use of this notion is the following inequality.

\begin{lemma}[Karamata's inequality, \cite{Kar32}]\label{lem:karamata}
    If $(x_i)_{i=1}^k$ is a sequence of real numbers that majorizes $(y_i)_{i=1}^k$, and $f$ is a real-valued convex function, then $\sum_{i=1}^k f(x_i) \geq \sum_{i=1}^k f(y_i)$. This inequality is strict if the sequences are not equal and $f$ is a strictly convex function. If $f$ is concave, the reverse inequality holds.
\end{lemma}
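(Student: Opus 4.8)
The plan is to prove the convex case directly by a summation-by-parts (Abel summation) argument, and then deduce the concave case by applying the convex case to $-f$.

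First I would record the majorization hypothesis in terms of partial sums. Set $X_j=\sum_{i=1}^j x_i$ and $Y_j=\sum_{i=1}^j y_i$ with $X_0=Y_0=0$, and write $D_j=X_j-Y_j$. Then majorization says exactly that $D_j\ge 0$ for every $j$, together with the boundary conditions $D_0=D_k=0$. Next, to each index $i$ I would attach a slope $c_i$ satisfying the identity $f(x_i)-f(y_i)=c_i(x_i-y_i)$: when $x_i\ne y_i$ take $c_i$ to be the secant slope $\frac{f(x_i)-f(y_i)}{x_i-y_i}$, and when $x_i=y_i$ take $c_i$ to be any subgradient of $f$ at that point (the identity then holds trivially).

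The structural heart of the proof is that $(c_i)_{i=1}^k$ is non-increasing. This is precisely the monotonicity of difference quotients of a convex function: since $x_{i+1}\le x_i$ and $y_{i+1}\le y_i$, both endpoints defining $c_{i+1}$ lie weakly to the left of those defining $c_i$, so $c_{i+1}\le c_i$ (the subgradient convention handling the degenerate entries). With this in hand, I would expand $\sum_i\bigl(f(x_i)-f(y_i)\bigr)=\sum_i c_i(x_i-y_i)=\sum_i c_i(D_i-D_{i-1})$ and apply summation by parts. Because $D_0=D_k=0$ the boundary terms vanish, leaving $\sum_{i=1}^{k-1}(c_i-c_{i+1})D_i$. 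Every factor $c_i-c_{i+1}$ is non-negative by the monotonicity of the slopes and every $D_i$ is non-negative by majorization, so this sum is $\ge 0$, which is exactly the desired inequality.

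For the strict statement I would assume $f$ strictly convex and $(x_i)\ne(y_i)$ and show the Abel sum is strictly positive. Let $p$ be the first index with $x_p\ne y_p$; since $X_{p-1}=Y_{p-1}$ and $D_p\ge 0$, one gets $D_p=x_p-y_p>0$. If the whole sum were zero then, all terms being non-negative, each product $(c_i-c_{i+1})D_i$ would vanish, forcing $c_i=c_{i+1}$ at every index with $D_i>0$; under strict convexity the difference quotients are strictly monotone, so $c_i=c_{i+1}$ forces $x_i=x_{i+1}$ and $y_i=y_{i+1}$. A short induction starting from $p$ then shows $D_{p+j}=(j+1)D_p>0$ for all admissible $j$, contradicting $D_k=0$; hence the sum is strictly positive. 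I expect the main obstacle to be this strictness bookkeeping, in particular verifying carefully that $c_i=c_{i+1}$ forces the endpoints to coincide, including the configurations where some $x_i$ equals its paired $y_i$ and $c_i$ is only a subgradient; this is a routine but slightly delicate case analysis on the strict monotonicity of subgradients of a strictly convex function.
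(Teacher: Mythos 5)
The paper does not actually prove this lemma --- it is quoted as a classical result with a citation to Karamata's 1932 paper --- so there is no in-paper argument to compare against; what you have written is the standard Abel-summation proof, and it is essentially correct. Encoding majorization as $D_j = X_j - Y_j \ge 0$ with $D_0 = D_k = 0$, writing $f(x_i)-f(y_i)=c_i(x_i-y_i)$ via secant slopes, invoking monotonicity of difference quotients of a convex function to get $c_1 \ge c_2 \ge \cdots \ge c_k$, and summing by parts to obtain $\sum_i \bigl(f(x_i)-f(y_i)\bigr)=\sum_{i=1}^{k-1}(c_i-c_{i+1})D_i \ge 0$ is exactly the classical route, and your strictness argument (if the Abel sum vanishes then $c_i=c_{i+1}$ wherever $D_i>0$; strict convexity makes secant slopes strictly monotone, so the pairs are frozen from the first discrepancy $p$ onward, forcing $D_k=(k-p+1)(x_p-y_p)>0$, contradicting $D_k=0$) is sound. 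One small repair is needed: ``take $c_i$ to be \emph{any} subgradient'' is too permissive. If $x_i=y_i=x_{i+1}=y_{i+1}=t$ at a kink of $f$ and you happen to choose $c_i=f'_-(t)$ and $c_{i+1}=f'_+(t)$, then $c_i<c_{i+1}$, and since $D_i$ can still be positive at such an index (e.g.\ $x=(2,1,1,0)$, $y=(1,1,1,1)$, where $D_2=1$), an individual Abel term $(c_i-c_{i+1})D_i$ goes negative and the per-term non-negativity your argument relies on fails (the total is rescued only because the degenerate terms cancel in pairs). The fix is trivial: use a fixed monotone selection of subgradients, say $c_i=f'_+(t)$ at every degenerate index, or simply the same subgradient at repeated points; then $c_{i+1}\le c_i$ holds in all cases, and the same selection also cleans up the ``slightly delicate'' coincidence cases in your strictness analysis, since for strictly convex $f$ any secant over points in $(-\infty,t]$ with one endpoint strictly below $t$ is strictly less than $f'_-(t)$, hence strictly less than any subgradient at $t$.
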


\begin{section}
{Maximum number of matchings of a fixed size}\label{sec:ExtrTrees_mk}
\end{section}

In this section, we characterize the trees $T$ of order $n$ maximizing the number of almost-perfect matchings and use this to characterize those maximizing the number of matchings of a fixed size $k$.
When $n=2k$, any tree with a perfect matching is extremal since the perfect matching must be unique. While it is straightforward to check if a given tree has a perfect matching, there is not a clear characterization of the set of all trees with perfect matchings (there are exponentially many of them~\cite{Simion91}).

When $n=2k+1$, we prove that there are many extremal trees maximizing the number of almost-perfect matchings. For convenience, we recall the theorem statement:
\begin{customthm} {\bf \ref{thr:odd_apm}} If $n$ is odd, a tree $T$ of order $n$ has at most $\frac{n+1}{2}$ almost-perfect matchings.
    Equality holds if and only if $T$ is a $1$-subdivision of a tree of order $\frac{n+1}{2}.$
\end{customthm}

\begin{proof}
    Recall that every tree has a unique bipartition, so let $T=(X \cup Y,E)$. If $T$ has an almost-perfect matching, then the bipartition classes must differ by $1$ and the larger one, without loss of generality $X$, contains precisely $\frac{n+1}{2}$ vertices. This immediately implies the upper bound, since $T\setminus \{y\}$ for $y \in Y$ cannot have a perfect matching.

    Next, we prove the characterization of the extremal trees.
    Let $T'$ be an arbitrary tree of order $\frac{n+1}{2}$.
    Let $T$ be a 1-subdivision of $T'$, with $Y$ the set of added vertices.
    For every $v \in V(T')$, we can construct a perfect matching of $T \setminus \{v\}$: viewing $T$ as a tree rooted at $v$, match each $u \in V(T')\setminus \{v\}$ with its neighbor (in $Y$) on the unique path from $u$ to $v$.
    Hence any subdivision of a tree of order $\frac{n+1}{2}$ contains $\frac{n+1}{2}$ almost-perfect matchings.

    We prove the other direction by induction on $n$.
    The cases $n \in \{1,3\}$ are straightforward, since $P_3$ is the only tree of order $3$ and is the $1$-subdivision of $P_2$.
    Thus, assume the statement is true for $n-2,$ where $n \ge 5$.
    Let $T=(X \cup Y,E)$ be a tree that attains equality and without loss of generality, let $|X| = \frac{n+1}{2}$. Then for every choice of $v \in X$, $T \setminus \{v\}$ has a perfect matching.
    Observe that every leaf of $T$ must be in $X$. If there is a leaf in $Y$, then its unique neighbor in $X$ cannot be avoided by an almost-perfect matching, since any such matching would also avoid the leaf itself.
    This implies that no two leaves can have the same neighbor. Indeed, if $x_1, x_2 \in X$ are leaves, then for any $v \in X$ such that $v \neq x_1, x_2$, the almost-perfect matching that avoids $v$ would have to cover both leaves $x_1$ and $x_2$, a contradiction.
    
    Now let $x \in X$ be a leaf such that its unique neighbor $y$ has degree 2 (such a leaf must exist, e.g.~one of the ends of a diameter). Then $T':=T \setminus \{x, y\}$ is a tree.  Furthermore, for any $v \in X \setminus \{x\}$, we know that $T \setminus \{v\}$ has a perfect matching, which must contain the edge $xy$. Thus, $T'$ contains $\frac{n-1}{2}$ almost-perfect matchings so we may apply the inductive hypothesis to conclude that $T'$ is a $1$-subdivision of a tree, and hence $T$ is as well.
\end{proof}

Next, we will show that when $n=2k+2$, the path is extremal in all non-trivial cases. We start with a lemma that will be applied in the proof of this and some further theorems.

\begin{lemma}\label{lem:leavesatdistance2=>n_bound}
    If $n$ is even and $T$ is a tree of order $n$ with at least one pair of leaves sharing a common neighbor, then $T$ has at most $n-1$ almost-perfect matchings.
\end{lemma}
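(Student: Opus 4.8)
The plan is to count almost-perfect matchings through their pairs of avoided vertices. Since $n$ is even, each almost-perfect matching $M$ avoids exactly two vertices, and $M$ is a perfect matching of the forest obtained by deleting those two vertices. As every forest has at most one perfect matching (Lemma~\ref{lem:treepm} applied componentwise), the map sending $M$ to its unordered pair of avoided vertices is injective. Hence it suffices to bound the number of \emph{good pairs} $\{a,b\}$, meaning pairs for which $T\setminus\{a,b\}$ has a perfect matching, and to show there are at most $n-1$ of them.

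Next I would exploit the leaf pair. Let $u,v$ be leaves with common neighbor $w$. First, $w$ can never be avoided: deleting $w$ isolates both $u$ and $v$, and a single further deletion can save at most one of them, so $T\setminus\{a,b\}$ would retain an isolated vertex and admit no perfect matching. Second, at least one of $u,v$ must be avoided: if both survive together with $w$, then both leaves can only be matched to $w$, which is impossible. Consequently every good pair is either $\{u,v\}$ or of the form $\{u,z\}$ or $\{v,z\}$ with $z\notin\{u,v,w\}$. In the latter two cases the surviving leaf is forced to match $w$, so after removing this forced edge the condition becomes exactly that $F\setminus\{z\}$ has a perfect matching, where $F:=T\setminus\{u,v,w\}$ is a forest on $n-3$ vertices. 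Writing $S:=\{z\in V(F): F\setminus\{z\}\text{ has a perfect matching}\}$, the number of good pairs is therefore at most $1+2\lvert S\rvert$.

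The heart of the argument is the bound $\lvert S\rvert\le \frac n2-1$. Suppose $S$ is nonempty and take $z\in S$. Removing $z$ leaves a perfect matching of $F$, so every component of $F$ other than the one containing $z$ must have a perfect matching, while the component $C\ni z$ satisfies that $C\setminus\{z\}$ has a perfect matching. This pins down a single component: any two elements of $S$ must lie in the unique component $C$ of $F$ that itself has no perfect matching (if two components lacked one, no single deletion could repair both; if every component had one, deleting $z$ from its own component would leave odd order and no perfect matching). Thus $S\subseteq V(C)$, the tree $C$ has odd order, and, using the uniqueness of the bipartition of the connected graph $C$, every element of $S$ lies in the larger bipartition class of $C$, whose size is $\frac{\lvert V(C)\rvert+1}{2}$. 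Since $\lvert V(C)\rvert\le \lvert V(F)\rvert=n-3$, we obtain $\lvert S\rvert\le\frac{n-2}{2}=\frac n2-1$, and hence at most $1+2\bigl(\frac n2-1\bigr)=n-1$ good pairs, as desired.

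I expect the main obstacle to be precisely this last step: ruling out that too many distinct vertices $z$ can each be deleted to leave a perfect matching of $F$. The reduction to a single odd component, followed by the parity/bipartition count, is what keeps $\lvert S\rvert$ below $\frac n2-1$; the earlier structural observations about $w$ and the two leaves are routine bookkeeping once the reformulation in terms of good pairs is in place.
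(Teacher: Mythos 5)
Your proof is correct. It follows the same basic skeleton as the paper's proof --- split the almost-perfect matchings according to which of the two sibling leaves is avoided, bound each branch by roughly $n/2$ via bipartite parity, and correct for the overlap coming from the pair $\{u,v\}$ (your count $1+2\left(\frac n2 -1\right)$ mirrors the paper's $\frac n2+\frac n2-1$) --- but it differs in how the $n/2$-type bound is obtained. The paper simply observes that matchings avoiding $u$ (resp.\ $u'$) are almost-perfect matchings of $T\setminus u$ (resp.\ $T\setminus u'$), invokes Theorem~\ref{thr:odd_apm} to bound each by $\frac n2$, and subtracts the doubly counted perfect matching of $T\setminus\{u,u'\}$; this is a three-line reduction, and it also hands the paper an equality characterization for free (equality forces $T\setminus u$ to be extremal for Theorem~\ref{thr:odd_apm}, i.e.\ a $1$-subdivision of a tree). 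You instead pass to the forest $F=T\setminus\{u,v,w\}$ after peeling off the forced edge at $w$, and re-derive the parity bound from scratch: the set $S$ of deletable vertices lies inside the unique component of $F$ with no perfect matching, hence inside its larger bipartition class. This is essentially an inlined, forest version of the counting argument inside the proof of Theorem~\ref{thr:odd_apm}. Your route is self-contained and slightly more general (it works directly with forests), at the cost of length; the paper's route is shorter and reuses machinery it has already established, which is the natural choice given that Theorem~\ref{thr:odd_apm} is proved immediately before this lemma. Note also that you could have cited Theorem~\ref{thr:odd_apm} legitimately, since it precedes the lemma in the paper.
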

\begin{proof} Recall that for $n$ even, an almost-perfect matching $M$ covers all but two vertices, and we call these vertices the avoided pair of $M$. 
Let $T$ be a tree of order $n$, and let $u$ and $u'$ be the two leaves with a common neighbor (siblings).
No matching can cover both leaves, so for every almost-perfect matching $M$ of $T$, at least one of $u$ and $u'$ is contained in the avoided pair of $M$.

By Theorem~\ref{thr:odd_apm}, $T\setminus \{u\}$ and $T\setminus  \{u'\}$ have both at most $\frac{n}{2}$ almost-perfect matchings.
Equality is only possible if $T\setminus \{u\}$ and $T\setminus  \{u'\}$ are $1$-subdivisions of a tree. Since the perfect matching of $T\setminus \{u, u'\}$ has been counted twice, $T$ has at most $n-1$ almost-perfect matchings.
Because $T\setminus \{u\}$ and $T\setminus \{u'\}$ are isomorphic, we also know that equality is attained if and only if $T$ is a $1$-subdivision of a tree of order $\frac{n}{2}$ where one copy of a leaf is added (i.e. a copy $u'$ of a leaf $u$ with the same neighbor).
\end{proof}

We use this to prove the case for $n$ even:
\begin{customthm} {\bf \ref{thr:even_apm}}
	If $n$ is even, a tree $T$ of order $n$ has at most $\binom{ \frac{n}{2}+1}{2}=\frac{n(n+2)}{8}$ almost-perfect matchings.
	\begin{itemize}
	\item For $n > 4$, equality holds if and only if $T$ is the path $P_n$.
	\item For $n=4$, equality holds for both $P_4$ and $S_4$.
    \end{itemize}
\end{customthm}

\begin{proof}
    One can check by hand that for $n = 4$ both $P_4$ and $S_4$ have three almost-perfect matchings, for $n=6$ the path $P_6$ has six almost-perfect matchings, and in both cases these are the only examples attaining the maximum.  

	 Now let $n \ge 8$ and suppose the claim holds for all even numbers less than or equal to $n-2$. 
	 
	 Since $n < \frac{n(n+2)}{8}$, by Lemma~\ref{lem:leavesatdistance2=>n_bound} we may assume there are no two leaves with a common neighbor. Taking a diameter of the tree, we note that there is an edge $uv$ with $u$ a leaf and $v$ its neighbor of degree $2$. Then either $uv$ is contained in the almost-perfect matching, or the avoided pair contains $u$. We count the number of matchings in each case and take the sum to get the total number of almost-perfect matchings in $T$.
	 
	 For the first term, the number of almost-perfect matchings containing $uv$ is equal to the number of almost-perfect matchings in $T \setminus \{u,v\}$ which by the inductive hypothesis is at most $\frac{n(n-2)}{8}$.
	 For the second term, the number of avoided pairs containing $u$ is at most the number of almost-perfect matchings in $T \setminus \{u\}$, which is bounded by $\frac{(n-1)+1}{2}$ by Theorem~\ref{thr:odd_apm}.
	 Since $\frac{n(n-2)}{8}+\frac{(n-1)+1}{2}=\frac{n(n+2)}{8}$, the first statement of the theorem holds by induction.
	 
	 To characterize equality, we see by the inductive hypothesis that equality holds for the first term if and only if $T \setminus \{u, v\} = P_{n-2}$. By Theorem~\ref{thr:odd_apm}, equality holds for the second term if and only if $T \setminus \{u\}$ is a 1-subdivision of a tree of order $\frac n2$. Since $v$ must have degree 2, these imply that equality holds overall if and only if $T = P_n$.
\end{proof}

Another way of stating Theorem~\ref{thr:even_apm} is that $P_n$ maximizes $m_k(T)$ when $n = 2k+2$ and $k \geq 2$. We can extend this to larger values of $n$. Note that $m_1(T)=n-1$ for every tree $T$ and so it has been omitted in the statement. 

\begin{theorem}\label{thr:maxmatch}
    If $k \geq 2$ and $T$ is a tree of order $n$ with $n \ge 2k+2$, then 
    $m_k(T) \le m_k(P_n)=\binom{n-k}{k}$. Equality holds if and only if $T=P_n.$
\end{theorem}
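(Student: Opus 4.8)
The plan is to prove Theorem~\ref{thr:maxmatch} by induction on $n$ with $k$ held fixed, taking Theorem~\ref{thr:even_apm} as the base case $n=2k+2$. For such $n$ a matching of size $k$ covers $2k$ of the $2k+2$ vertices, hence is exactly an almost-perfect matching, and $\binom{n-k}{k}=\binom{k+2}{2}$ agrees with the bound $\binom{n/2+1}{2}$ there; since $k\ge 2$ gives $n\ge 6>4$, the uniqueness (equality only for $P_n$) is inherited directly. Before the induction I would record one auxiliary fact: \emph{every forest $F$ of order $m$ satisfies $m_j(F)\le\binom{m-j}{j}=m_j(P_m)$}. This follows from the known non-strict maximality of the path among \emph{trees} (\cite[Thm.~4.7.3]{WH19}): adding an edge between two distinct components of a forest keeps it acyclic and destroys no matching, so each $m_j$ weakly increases; iterating turns $F$ into a spanning tree $T^\ast$ with $m_j(F)\le m_j(T^\ast)\le\binom{m-j}{j}$.

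For the inductive step, take $n\ge 2k+3$, a tree $T$ of order $n$, and any leaf $u$ with neighbour $v$. Partitioning the $k$-matchings of $T$ according to whether they contain the edge $uv$ gives
\[
m_k(T)=m_k(T\setminus u)+m_{k-1}(T\setminus\{u,v\})\le\binom{n-1-k}{k}+\binom{n-1-k}{k-1}=\binom{n-k}{k},
\]
where the first term is bounded by the induction hypothesis (as $T\setminus u$ is a tree of order $n-1\ge 2k+2$), the second by the forest bound (as $T\setminus\{u,v\}$ is a forest of order $n-2$), and the final equality is Pascal's rule. This establishes the upper bound.

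For uniqueness, equality requires both terms to be simultaneously maximal. Maximality of the first term, together with the strict inductive characterization, forces $T\setminus u=P_{n-1}$, so $T$ is the path $P_{n-1}$ with the extra leaf $u$ attached at $v$; it remains to show $v$ is an endpoint. If $v$ were internal, then $T\setminus\{u,v\}=P_{n-1}\setminus v$ would split into two \emph{nonempty} paths $P_a\sqcup P_b$ with $a+b=n-2$. Conditioning on the edge of $P_{a+b}$ joining the two blocks yields
\[
m_{k-1}(P_{a+b})=m_{k-1}(P_a\sqcup P_b)+m_{k-2}(P_{a-1}\sqcup P_{b-1}),
\]
and the last term is strictly positive, since $n\ge 2k+3$ guarantees a matching of size $k-2$ in $P_{a-1}\sqcup P_{b-1}$ (its maximum matching has size at least $\tfrac{n-6}{2}\ge k-1$). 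Hence $m_{k-1}(T\setminus\{u,v\})<\binom{n-1-k}{k-1}$, contradicting maximality of the second term. Therefore $v$ is an endpoint of $P_{n-1}$ and $T=P_n$; the converse is immediate.

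I expect the upper bound itself to be almost immediate once the forest lemma and the leaf decomposition are in place, so the delicate point is the equality analysis — in particular the split-path strict inequality $m_{k-1}(P_a\sqcup P_b)<m_{k-1}(P_{a+b})$ and the verification that a $(k-2)$-matching survives in $P_{a-1}\sqcup P_{b-1}$ under the hypothesis $n\ge 2k+3$ (including the degenerate cases $a=1$ or $b=1$). This is where I would spend the most care.
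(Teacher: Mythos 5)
Your proof is correct, but it takes a genuinely different route from the paper's. The paper proves the full statement (bound \emph{and} uniqueness) for \emph{forests}, by an outer induction on $k$ with an inner induction on $n$: the base case $k=2$ is handled via Karamata's inequality applied to degree sequences, the case $n=2k+2$ for disconnected forests by a three-way parity analysis of the components (using Theorem~\ref{thr:odd_apm}, Lemma~\ref{lem:treepm}, and AM--GM), and the step $n\ge 2k+3$ uses the same leaf decomposition $m_k(F)=m_k(F\setminus u)+m_{k-1}(F\setminus\{u,v\})$ that you use; uniqueness then comes for free because the induction hypothesis at level $k-1$ already characterizes equality in the second term. You instead fix $k$ and induct only on $n$ among trees, outsourcing the control of the second (forest) term to the known non-strict bound of \cite[Thm.~4.7.3]{WH19} via edge-addition monotonicity and a spanning tree, and then recovering strictness by hand through the split-path identity $m_{k-1}(P_{a+b})=m_{k-1}(P_a\sqcup P_b)+m_{k-2}(P_{a-1}\sqcup P_{b-1})$. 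Both routes work. Yours is leaner: it avoids the Karamata base case and the disconnected-forest case analysis entirely, and your equality analysis is more explicit than the paper's rather terse final sentence. What it gives up is self-containedness (the paper reproves the non-strict path bound rather than citing it, so its argument is new even for the inequality) and a slightly stronger conclusion (the paper's induction establishes the characterization among all forests, not just trees). One arithmetic nitpick: when $n=2k+3$ the quantity $\frac{n-6}{2}$ equals $k-\frac{3}{2}$, so the displayed chain ``$\ge\frac{n-6}{2}\ge k-1$'' is not literally valid there; but since a maximum matching has integer size, the bound $\ge k-1$, and hence the existence of the $(k-2)$-matching you need (with $m_0=1$ covering $k=2$), still holds, so nothing breaks.
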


\begin{proof}
    We will prove that the statement is true for forests of order $n$ using induction on $k$.

    When $k=2$, for a tree $T$ we have $m_2(T)=\binom{n-1}{2}-\sum_v \binom{\deg(v)}{2}.$ Observe that the degree sequence of the path $P_n$ is majorized by the degree sequence of any other $n$-vertex tree. Indeed, the degree sequence of $P_n$ is $(2, 2, \ldots, 2, 1, 1)$. If $T \neq P_n$ has degree sequence $(d_1 \geq d_2 \geq \cdots \geq d_n)$ and $\sum_{i=1}^j d_i < 2j$ for some $j \leq n-2$, then we must have $d_i = 1$ for all $i \geq j$. Then $\sum_{i=1}^n d_i < 2j + (n-j) \leq 2n-2$, a contradiction since $\sum_{i=1}^n d_i = 2(n-1)$ by the handshaking lemma. Thus, we may apply Lemma~\ref{lem:karamata} (with strict inequality since the degree sequences of $T$ and $P_n$ must be different) to conclude that for any $T \neq P_n$, we have $m_2(T) < m_2(P_n) = {n-2 \choose 2}$.
    
    If $F$ is a forest with at least two components, then $|E(F)| \leq n-2$ and as such, $m_2(F) \leq {n-2 \choose 2}$. Equality is not possible when $n > 4$.

    Now assume the statement holds for $m_{k-1}(T)$ for any $T$ on $n \geq 2k$ vertices.
    When $n=2k+2$, a matching of size $k$ is an almost-perfect matching and so $P_n$ is extremal among trees by Theorem~\ref{thr:even_apm}.
    For a forest $F$ with multiple components, there are three cases.
    
    {\bf Case 1:} All components of $F$ have even size. Let the components of $F$ be $T_1, T_2, \dots, T_r$ where $|T_i| = 2k_i$ for $1 \leq i \leq r$. An almost-perfect matching of $F$ consists of an almost-perfect matching of $T_i$ for some $i$ and a perfect matching of $T_j$ for every $j \neq i$, of which there is at most one by Lemma~\ref{lem:treepm}. Then 
    $$m_k(F) \le \sum_{i=1}^r m_{k_i-1}(T_i) \leq \sum_i m_{k_i - 1}(P_{2k_i}).$$ 
    We claim this is strictly less than $m_k(P_n)$. By Theorem~\ref{thr:even_apm}, it suffices to observe that 
    $$\sum_{i=1}^r \binom{k_i +1}{2} < \binom {\sum k_i + 1}2.$$
    This is true by induction on $r$ since $\binom{a+b+1}{2} > \binom{a+1}{2}+\binom{b+1}2$ for every $a,b \ge 1$ (a straightforward computation).

    {\bf Case 2:} At least four components are odd. Then $m_k(F)=0$ since at least one vertex from each of the four components must be avoided.
    
    {\bf Case 3:} Exactly two components are odd, with order $a$ and $b$. In the last case, we apply Theorem~\ref{thr:odd_apm} and the inequality between the arithmetic and geometric mean (AM-GM) to conclude
    $m_k(F) \le \frac{a+1}{2}\cdot \frac{b+1}{2} \le \left(\frac{k+2}{2}\right)^2 < \binom{k+2}{2}=m_k(P_n).$
    Thus, the statement is true for forests when $n=2k+2.$
    
    So for $n\ge 2k+3$, we may assume that the claim holds for $n-1$.

    Let $F$ be a forest and $u$ a leaf of $F$, with unique neighbor $v$ (a forest without edges is clearly not extremal).
    We observe that $m_k(F)=m_k(F \setminus \{u\}) + m_{k-1}(F \setminus \{u,v\})$, since either $u$ is covered by the matching or it is not.
    By the inductive hypothesis, we have that $m_k(F \setminus \{u\})$ is maximized when $F \setminus u = P_{n-1}$ and $m_{k-1}(F \setminus \{u,v\})$ is maximized when $F \setminus \{u,v\} = P_{n-2}$. This shows that $m_k(F)$ is maximized if and only if $F = P_n$, completing the induction.
\end{proof}

\begin{section}
{Maximum number of strong almost-perfect matchings}\label{sec:sapm}
\end{section}

Recall that a strong almost-perfect matching (\sapm) is a matching which avoids one leaf when $n$ is odd and two leaves when $n$ is even. In this section, we characterize the trees that maximize the number of {\sapm}s with three theorems which, combined, give a more complete version of Theorem~\ref{thr:sapmcombined}.

We first define the relevant families of trees. Often a spider can refer to any subdivision of a star, i.e. any tree which has at most one vertex of degree at least $3$. We will refer to a spider $\Sp_n$ as the unique spider of order $n$ whose ``legs'' all have length $2$, except possibly one leg of length $1$ when $n$ is even.

\begin{defi}
    An {\em odd spider} is a $1$-subdivision of a star. It has odd order. An {\em even spider} is an odd spider with an additional pendant edge added to the unique vertex of degree $\geq 3$, or added to the center of a $P_5$. It has even order.
    We use the notation $\Sp_n$ for a (odd or even depending on parity of $n$) spider of order $n$. See Figure~\ref{fig:spiders}.
    A spider $\Sp_i$ for $i \le 5$ corresponds with a path $P_i$.
\end{defi}

A {\em leg} then refers to any of the 1-subdivided edges or to the additional pendant edge.

\begin{figure}[ht]

\begin{minipage}[b]{.45\linewidth}
\begin{center}
    \begin{tikzpicture}[scale=0.7]
    {
	\foreach \x in {0,1,2,3,4}{\draw[thick] (2,4) -- (\x,2);}				
	\foreach \x in {0,1,2,3,4}{\draw[thick] (\x,2) -- (\x,0);}
	
    \foreach \x in {0,1,2,3,4}{\draw[fill] (\x,2) circle (0.1);}
	\foreach \x in {0,1,2,3,4}{\draw[fill] (\x,0) circle (0.1);}
    \draw[fill] (2,4) circle (0.1);
			
	}
	\end{tikzpicture}\\
\subcaption{An odd spider $\Sp_{11}$}
\label{fig:oddspider}
\end{center}
\end{minipage}\quad\begin{minipage}[b]{.45\linewidth}

\begin{center}
    \begin{tikzpicture}[scale=0.7]
    {
	\foreach \x in {0,1,2,3,4}{\draw[thick] (2,4) -- (\x,2);}				
	\foreach \x in {0,1,2,3,4}{\draw[thick] (\x,2) -- (\x,0);}
	\draw[thick] (2,4) -- (4,4);
    \foreach \x in {0,1,2,3,4}{\draw[fill] (\x,2) circle (0.1);}
	\foreach \x in {0,1,2,3,4}{\draw[fill] (\x,0) circle (0.1);}
    \draw[fill] (2,4) circle (0.1);
	\draw[fill] (4,4) circle (0.1);		
	}
	\end{tikzpicture}\\
\subcaption{An even spider $\Sp_{12}$}
\label{fig:evenspider}
\end{center}
\end{minipage}
\caption{Two spiders}\label{fig:spiders}
\end{figure}
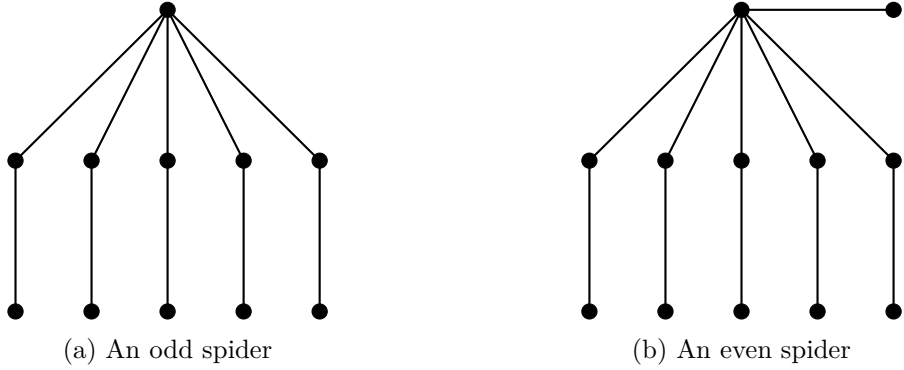

\begin{defi}\label{def:DB}
    A {\em double broom} $\DB_{a,b}$ is a tree of diameter $3$ whose non-leaves have degree $a+1$ and $b+1$ respectively.
    A {\em balanced double broom} on $n$ vertices is a tree of diameter $3$ whose non-leaves have degree $\floorfrac{n}{2} $ on one side and $\ceilfrac{n}{2}$ on the other side. See Figure~\ref{fig:doublebroom}.
   
\end{defi}

\begin{figure}[ht]

\begin{minipage}[b]{.45\linewidth}
\begin{center}
    \begin{tikzpicture}[scale=0.7]
    {
	\foreach \x in {0,1,2,3}{\draw[thick] (0,\x+0.5) -- (2,2);}				
	\foreach \x in {0,1,2,3,4}{\draw[thick] (4,2) -- (6,\x);}
	\draw[thick] (4,2) -- (2,2);
	
    \foreach \x in {0,1,2,3}{\draw[fill] (0,\x+0.5) circle (0.1);}
	\foreach \x in {0,1,2,3,4}{\draw[fill] (6,\x) circle (0.1);}
    \draw[fill] (4,2) circle (0.1);
	\draw[fill] (2,2) circle (0.1);	
	\draw [decorate,decoration={brace,amplitude=4pt},xshift=0pt,yshift=0pt] (-0.25,0.5) -- (-0.25,3.5) node [black,midway,xshift=-0.35cm]{$a$};
	\draw [decorate,decoration={brace,amplitude=4pt},xshift=0pt,yshift=0pt] (6.25,4) -- (6.25,0) node [black,midway,xshift=0.35cm]{$b$};

	}
	\end{tikzpicture}\\
\subcaption{A (balanced) double broom $\DB_{4,5}$}
\label{fig:doublebroom}
\end{center}
\end{minipage}
\quad\begin{minipage}[b]{.45\linewidth}
\begin{center}
    \begin{tikzpicture}[scale=0.7]
    {
	\foreach \x in {0,1,2,3}{\draw[thick] (0,\x+0.5) -- (2,2);}				
	\foreach \x in {0,1,2,3,4}{\draw[thick] (4,2) -- (6,\x);}
	\draw[thick] (4,2) -- (2,2);
	
    \foreach \x in {0,1,2,3}{\draw[fill] (0,\x+0.5) circle (0.1);}
    \foreach \x in {0,1,2,3}{\draw[fill] (1,0.5*\x+1.25) circle (0.1);}
	\foreach \x in {0,1,2,3,4}{\draw[fill] (6,\x) circle (0.1);}
	\foreach \x in {0,1,2,3,4}{\draw[fill] (5,0.5*\x+1) circle (0.1);}
    \draw[fill] (4,2) circle (0.1);
	\draw[fill] (2,2) circle (0.1);		
	}
	\end{tikzpicture}\\
\subcaption{A wide spider $W_{20}$}
\label{fig:widespider}
\end{center}
\end{minipage}
\caption{A double broom and wide spider}\label{fig:db+widespider}
\end{figure}
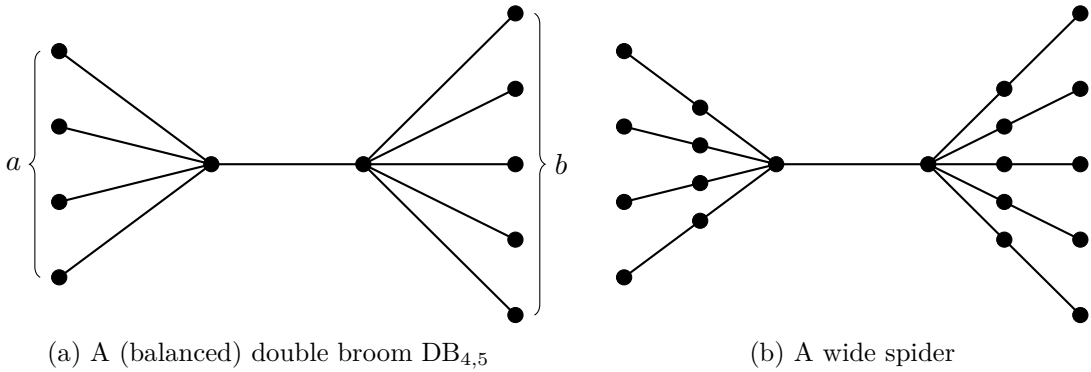

\begin{theorem}\label{thr:odd_sapm}

    Let $n$ be odd. If $n \geq 5$, a tree $T$ of order $n$ has at most $\frac{n-1}{2}$ strong almost-perfect matchings.
    
    For $n \geq 7$, equality holds if and only if $T$ is an odd spider.
    
    For $n=5$, equality holds for both $\DB_{1,2}$ and $P_5.$
\end{theorem}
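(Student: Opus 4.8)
The plan is to count strong almost-perfect matchings by their single avoided vertex and to lean on the fact that a tree has at most one perfect matching (Lemma~\ref{lem:treepm}). Since $n$ is odd, a \sapm{} avoids exactly one vertex, which must be a leaf, and is precisely a perfect matching of $T\setminus\ell$ for that avoided leaf $\ell$; as $T\setminus\ell$ is again a tree, it has at most one perfect matching. Hence the number of {\sapm}s equals the number of leaves $\ell$ for which $T\setminus\ell$ admits a perfect matching. First I would fix the unique bipartition $T=(X\cup Y,E)$ and note that, $n$ being odd, the classes differ in size, say $|X|=\frac{n+1}2>|Y|=\frac{n-1}2$. Any perfect matching of $T\setminus\ell$ saturates all of $Y$ and all but one vertex of $X$, and that uncovered vertex is the avoided leaf; so only leaves lying in $X$ can ever be avoided, and the number of {\sapm}s is at most the number of leaves contained in $X$.

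To finish the upper bound I would show $X$ contains at most $\frac{n-1}2$ leaves, i.e. at least one vertex of $X$ is not a leaf. If all $\frac{n+1}2$ vertices of $X$ were leaves, then, since $X$ is independent and $T$ is a connected bipartite graph, every edge would join a degree-one vertex of $X$ to $Y$, forcing $|E|=|X|=\frac{n+1}2$; but $|E|=n-1$, giving $n=3$ and contradicting $n\ge5$. Thus $X$ has at least one non-leaf and the bound $\frac{n-1}2$ follows.

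For equality both inequalities must be tight: $X$ has exactly one non-leaf $c$, and $T\setminus\ell$ has a perfect matching for each of the $\frac{n-1}2$ leaves $\ell\in X$. A handshake count on $X$ (its leaves contribute $\frac{n-1}2$ to the degree sum $n-1$) forces $\deg(c)=\frac{n-1}2=|Y|$, so $c$ is adjacent to every vertex of $Y$, and the leaves of $X$ distribute among the $Y$-vertices, with $y_i$ carrying $a_i\ge0$ of them and $\sum_i a_i=\frac{n-1}2$. Analyzing a perfect matching of $T\setminus\ell$ (each surviving $X$-leaf must be matched to its unique $Y$-neighbor, and $c$ to the one $Y$-vertex left free) shows it exists for the leaf $\ell$ at $y_j$ exactly when every $y_i$ with $i\ne j$ carries at most one leaf and $a_j\le2$. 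The main obstacle is to promote this local requirement, imposed simultaneously for all leaves, to a rigid global structure: if two distinct $Y$-vertices carry leaves, then every $a_i\le1$, whence all $a_i=1$ and $T$ is the odd spider; whereas if only one $Y$-vertex carries leaves it must carry all $\frac{n-1}2$ of them while respecting $a_i\le2$, forcing $\frac{n-1}2\le2$ and hence $n=5$, which produces the extra extremal tree $\DB_{1,2}$ (note that $P_5$ is itself the odd spider on five vertices). Verifying the ``one free $Y$-vertex'' condition in each surviving configuration then completes the characterization.
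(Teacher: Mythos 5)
Your proposal is correct, but it takes a genuinely different route from the paper's. The paper's upper bound splits on whether two leaves share a common neighbor (at most two {\sapm}s in that case; otherwise at most $\frac{n-1}{2}$ leaves, hence at most $\frac{n-1}{2}$ {\sapm}s), and its equality analysis for $n \geq 7$ is short and local: the $\frac{n-1}{2}$ leaves $u_i$ have distinct neighbors $v_i$, leaving one extra vertex $x$, and the perfect matching of $T \setminus u_i$ must contain every edge $u_jv_j$ with $j \neq i$, forcing $x \sim v_i$ for all $i$; the case $n=5$ is settled by a hand check. You instead work with the bipartition $X \cup Y$: avoided leaves must lie in the larger class $X$, an edge count shows $X$ contains a non-leaf, and in the equality case a degree count forces the unique non-leaf $c \in X$ to be adjacent to all of $Y$, after which the distribution $(a_i)$ of leaves over the $Y$-vertices is pinned down by the avoidability conditions. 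Your route is somewhat longer, but it buys a uniform treatment of all odd $n \geq 5$: the exceptional tree $\DB_{1,2}$ emerges structurally from the same case analysis (the ``only one $Y$-vertex carries leaves'' branch) rather than from a separate verification. Two points to tighten in a write-up: (i) the claim $\abs{X}=\frac{n+1}{2}$, $\abs{Y}=\frac{n-1}{2}$ does not follow from $n$ being odd alone (for the star the classes differ by $n-2$); you should first observe that if the classes differ by more than $1$ then $T$ has no \sapm{} at all and the bound is trivial, so one may assume these sizes. (ii) Your ``exactly when'' criterion ($a_i \leq 1$ for $i \neq j$ and $a_j \leq 2$) is a priori only a necessary condition; sufficiency relies on the global constraint $\sum_i a_i = \abs{Y}$, which guarantees that exactly one $Y$-vertex remains free to be matched with $c$. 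This is precisely the deferred ``one free $Y$-vertex'' verification you flag at the end, and it does go through, but it should be spelled out rather than left implicit.
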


\begin{proof}
If $T$ is a tree that contains two leaves with a common neighbor, then $T$ has at most two {\sapm}s, since every matching must avoid at least one of these two leaves.
Otherwise, $T$ has at least as many internal vertices as leaves and thus at most $\frac{n-1}{2}$ leaves. It follows that $T$ has at most $\frac{n-1}{2}$ {\sapm}s. The $n$-vertex spider achieves this upper bound.

For $n=5$, we have $\frac{n-1}{2}=2$, and one can check by hand that the extremal graphs are the path $P_5$ (which is a spider) and the double broom $\DB_{1,2}$.
For $n \ge 7$, let $T$ be a tree of order $n$ with $\frac{n-1}{2}$ {\sapm}s. Then $T$ must have $\frac{n-1}{2}$ leaves $\{u_i : 1 \leq i \leq \frac{n-1}{2}\}$. For each $i$, let $v_i$ be the unique neighbor of $u_i$. This gives us $n-1$ vertices, so $T$ contains one additional vertex $x$. For each $i$, $T \setminus \{u_i\}$ has a perfect matching, which must contain $u_jv_j$ for all $j \neq i$. Thus, $x \sim v_i$ for all $i$, and so $T$ is indeed a spider.
\end{proof}

When $n$ is even, an almost-perfect matching $M$ of $T$ contains all but two vertices, so counting the number of almost-perfect matchings of $T$ is equivalent to counting the number of avoided pairs $P \subset V(T)$ for which $T \setminus P$ has a perfect matching. This is the strategy we employ to prove the next theorem, in which we have the additional assumption that $T$ contains a perfect matching. 

    \begin{defi}\label{def:wide-spider}
A {\em wide spider} is constructed by subdividing every pendant edge of a double broom. For $n=2k$, the {\em balanced wide spider} $W_n$ on $n$ vertices is the tree one obtains from subdividing every pendant edge of a balanced double broom on $k+1$ vertices. See Figure~\ref{fig:widespider}.
\end{defi}

\newpage

\begin{theorem}\label{thr:even_sapm_wpm}
Let $n$ be even. If $T$ is a tree of order $n$ that contains a perfect matching, then the number of strong almost-perfect matchings in $T$ is at most
$$\max \left\{1,\frac{n-2}{2}, \floorfrac{(n-2)^2}{16}\right\} = \begin{cases} 1 & n = 2, \\ \frac{n-2}{2} & 2 < n \leq 10, \\ \floorfrac{(n-2)^2}{16} & n \geq 10.\end{cases}$$
Equality holds if and only if either
\begin{itemize}
    \item $n \leq 10$, and $T$ is obtained by attaching a leaf to each vertex of a tree of order $\frac{n}{2}$, or
    \item $n \geq 10$, and $T$ is the balanced wide spider.
\end{itemize}
\end{theorem}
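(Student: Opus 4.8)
The plan is to reduce the count to a clean optimization over \emph{oriented} trees. Fix the (unique, by Lemma~\ref{lem:treepm}) perfect matching $M^*$ of $T$, and note first that no two leaves share a neighbor, since such a neighbor could be matched to only one of them. As observed just before the statement, a strong almost-perfect matching is the same thing as a pair of leaves $\{u,u'\}$ for which $T\setminus\{u,u'\}$ has a (necessarily unique) perfect matching $M'$. Comparing $M'$ with $M^*$, the symmetric difference $M^*\triangle M'$ is a disjoint union of paths and even cycles; as $T$ is a tree it is a single path, which must run from $u$ to $u'$ and alternate between $M^*$- and $M'$-edges, beginning and ending with the matching edges $uv,u'v'\in M^*$. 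Hence $\{u,u'\}$ is a strong almost-perfect matching \emph{if and only if} the unique $u$--$u'$ path in $T$ is $M^*$-alternating, starting and ending with matching edges; in particular $u$ and $u'$ lie in different classes of the bipartition $X\cup Y$ of $T$.

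Next I would contract every edge of $M^*$ to obtain a tree $\hat T$ on $N=n/2$ vertices and orient each (necessarily unmatched) edge from its $Y$-endpoint to its $X$-endpoint. Each super-vertex carries one vertex of $X$ and one of $Y$; a super-vertex containing a leaf of $T$ becomes a \emph{source} (leaf in $X$, all edges outgoing) or a \emph{sink} (leaf in $Y$, all edges incoming), and all other super-vertices are \emph{internal}. A short check shows that $M^*$-alternating $u$--$u'$ paths correspond exactly to directed source-to-sink paths in $\hat T$. Thus the number of strong almost-perfect matchings of $T$ equals the number $f(\hat T)$ of source--sink pairs joined by a directed path, and (since every orientation of every tree on $N$ vertices is realized by expanding each vertex into a matched edge) maximizing over all $T$ with a perfect matching is the same as maximizing $f$ over all oriented trees on $N$ vertices.

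For the upper bound I would write $S,K$ for the numbers of sources and sinks. Each source--sink pair is joined by at most one directed path, so $f(\hat T)\le S\cdot K$; and sources and sinks are distinct vertices, so $S+K\le N$ with equality exactly when $\hat T$ has no internal vertex. If $\hat T$ has no internal vertex, then every edge runs from a source to a sink and $f(\hat T)=N-1$; this case is precisely $T$ being a corona (a leaf attached to every vertex of a tree of order $N$), for which the strong almost-perfect matchings are directly seen to biject with the $N-1$ edges of that tree. If $\hat T$ has at least one internal vertex, then $S+K\le N-1$, so $f(\hat T)\le S\cdot K\le \lfloor (N-1)^2/4\rfloor$. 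Combining the cases yields the bound $\max\{1,N-1,\lfloor (N-1)^2/4\rfloor\}$, the leading $1$ covering the degenerate case $N=1$ (that is $n=2$).

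The real work is the equality analysis. For $N\le 4$ (i.e.\ $n\le 8$) one has $\lfloor(N-1)^2/4\rfloor<N-1$, so equality forces the no-internal-vertex case, hence a corona. For $N\ge 6$ (i.e.\ $n\ge 12$) the quadratic term strictly dominates, so equality forces at least one internal vertex together with $f(\hat T)=S\cdot K=\lfloor (N-1)^2/4\rfloor$; since $\lfloor(N-2)^2/4\rfloor<\lfloor(N-1)^2/4\rfloor$ for $N\ge 4$, this pins down \emph{exactly one} internal vertex $c$ and a balanced $\{S,K\}$. The hard part, which I expect to be the main obstacle, is to show that $f(\hat T)=S\cdot K$ with a single internal vertex forces the spider orientation in which every source and every sink is adjacent to $c$: a source not adjacent to $c$ can only reach sinks adjacent to it, and demanding that it reach all $K\ge 2$ sinks, combined with acyclicity of $T$, produces a forbidden cycle; the symmetric statement handles sinks. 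This identifies $\hat T$ as a balanced in/out-star and hence $T$ as the balanced wide spider. Finally, $N=5$ (i.e.\ $n=10$) is the crossover point, where $N-1=\lfloor (N-1)^2/4\rfloor=4$ and both the corona and the balanced wide spider are extremal, matching the overlap in the statement. Careful bookkeeping of which leaves are sources versus sinks, and the spider-forcing argument, are where I anticipate the bulk of the effort.
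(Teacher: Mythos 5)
Your proposal is correct, and it reaches the theorem by a genuinely different route from the paper: you contract the unique perfect matching and recast strong almost-perfect matchings as source--sink directed paths in an oriented tree on $n/2$ vertices, whereas the paper argues directly inside $T$. The two arguments run structurally parallel---your sources and sinks are exactly the paper's leaves in $X$ and in $Y$ (so your bound $f\le S\cdot K$ is the paper's $\floorfrac{k^2}{4}$ with $k=S+K$ leaves), your unique internal super-vertex $c$ in the equality analysis is the paper's matched edge $xy$ between the two vertices that are neither leaves nor leaf-neighbors, and your ``spider-forcing'' cycle contradiction mirrors the paper's argument that $v_\ell\not\sim y$ would force a cycle---but the packaging differs in a substantive way. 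Your framework buys: (i) the realization statement that \emph{every} oriented tree on $n/2$ vertices arises from some $T$ with a perfect matching, which turns the extremal question into a clean optimization over oriented trees; (ii) a transparent corona case, since with no internal vertex every directed path has length one and $f$ equals the number of edges, $\frac{n}{2}-1$; (iii) a reachability refinement (only pairs joined by a directed path count), which is sharper in formulation than the paper's ``at most one matching per cross pair'' and makes the uniqueness analysis systematic. What the paper's direct argument buys is brevity: no contraction formalism, and the extremal structures are exhibited in $T$ itself. Finally, the step you flagged as the anticipated bulk of the work does go through essentially as you sketched: if a source $s$ is not adjacent to the unique internal vertex $c$, then all neighbors of $s$ are sinks, so $s$ must be adjacent to all $K\ge 2$ sinks; the in-neighbor of $c$ is a source $s''\ne s$ (no directed path can enter a source), and a directed path from $s''$ to a second sink is either a single edge or passes through $c$, and in either case its union with the two edges from $s$ to those sinks closes a cycle, contradicting that $\hat T$ is a tree. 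The symmetric argument for sinks then pins $\hat T$ down as a balanced in/out-star and $T$ as the balanced wide spider, completing the equality classification exactly as in the statement.
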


\begin{proof}
If $n = 2$, then there is precisely one \sapm, namely the empty matching, so assume that $n > 2$.
Let $T$ be an arbitrary tree of order $n$ with a perfect matching. Let $\{u_1, \dots, u_k\}$ be the leaves of $T$. Observe that no two leaves have the same neighbor; else, the perfect matching would match both with their common neighbor. It follows that $k \leq \frac{n}{2}$. Let $v_i$ be the unique neighbor of $u_i$, $1 \leq i \leq k$.

Recall that a bipartite graph can have a perfect matching only if the parts of the bipartition are the same size. Consider the (unique) bipartition $V(T) = X \cup Y$. An \sapm{} of $T$ that avoids $u_i$ and $u_j$ is a perfect matching of $T \setminus \{u_i, u_j\}$. This can occur only if $u_i$ and $u_j$ belong to different parts of the bipartition $X \cup Y$. Thus, there are at most $\floorfrac{k^2}{4}$ {\sapm}s of $T$.

If $k = \frac{n}{2}$, then $T[\{v_1, \dots, v_k\}]$ must be a tree of order $\frac{n}{2}$, and every \sapm{} in $T$ can only avoid $u_i$ and $u_j$ such that $v_i \sim v_j$. This gives $\frac{n}{2}-1 = \frac{n-2}{2}$ {\sapm}s in $T$, one for each edge of $T[\{v_1, \dots, v_k\}]$.

Otherwise, $k \leq \frac{n}{2} - 1 = \frac{n-2}{2}$, which means that there are at most $\floorfrac{(n-2)^2}{16}$ {\sapm}s. This is achieved by the wide spider $W_n$. Indeed, for every pair of leaves on opposite sides, there is an \apm{} that avoids those leaves, so the number of {\sapm}s in $W_n$ is $\floorfrac{(n-2)^2}{16}$.

Thus, we have shown that the maximum is
$$\max \left\{1,\frac{n-2}{2}, \floorfrac{(n-2)^2}{16} \right\},$$
and it remains to discuss the cases of equality. Note first that
$\frac{n-2}{2} \geq \floorfrac{(n-2)^2}{16}$ if and only if $n \leq 10$ (with equality for $n=10$), so the case that there are $k = \frac{n}{2}$ leaves yields the maximum for $2 < n \leq 10$, while the case that there are $k = \frac{n}{2} - 1$ leaves and $\floorfrac{k^2}{4}$ {\sapm}s yields the maximum for $n \geq 10$. 

It only remains to show that the wide spider $W_n$ is unique in the second case. Suppose $T$ has $k = \frac{n-2}2$ leaves and $\floorfrac{(n-2)^2}{16}$ almost-perfect matchings. Let $x$ and $y$ be the two vertices that are neither leaves nor neighbors of leaves. Their degree is at least $2$. The perfect matching of $T$ must contain the edge $u_iv_i$ for all $i$, so $x \sim y$. Suppose without loss of generality that $x \in X, y \in Y$. Then there is some $v_i \in X, v_j \in Y$ such that $x \sim v_j$ and $y \sim v_i$.

Let $v_\ell \in X$ for $\ell \neq i$. Suppose for the sake of contradiction that $v_\ell \not\sim y$. For each $m$ such that $v_m \in Y$, in order to have a perfect matching of $T \setminus \{u_\ell, u_m\}$, we must have $v_\ell \sim v_m$. This implies $v_m \not\sim x$ for all $m \neq j$; else, $\{x, v_j, v_\ell, v_m\}$ would form a cycle. Then there cannot be an almost-perfect matching that avoids $u_i$ and $u_m$ for $m \neq j$, since there is no disjoint set of edges to cover $v_m, v_\ell$, and $u_\ell$, which is a contradiction.
This tells us that $v_\ell \sim y$ for all $\ell$ such that $v_\ell \in X$, and a similar argument shows that $v_m \sim x$ for all $m$ such that $v_m \in Y$, and so $T$ is isomorphic to the balanced wide spider.
\end{proof}

When we remove the requirement that $T$ contains a perfect matching, we can still characterize the extremal examples as follows:

\begin{defi}
An {\em even special spider} $\SSS_n$ on $n$ vertices for $n$ even is an odd spider $\Sp_{n-1}$ with a sibling added for one of the leaves. See Figure~\ref{fig:evenspecialspider}.

A {\em spider trio} $\ST_{a,b,c}$ is a tree constructed from three odd spiders $\Sp_{2a+1}$, $\Sp_{2b+1}$, $\Sp_{2c+1}$ whose center vertices are attached to a single new vertex. See Figure~\ref{fig:spider-trio}.
\end{defi}

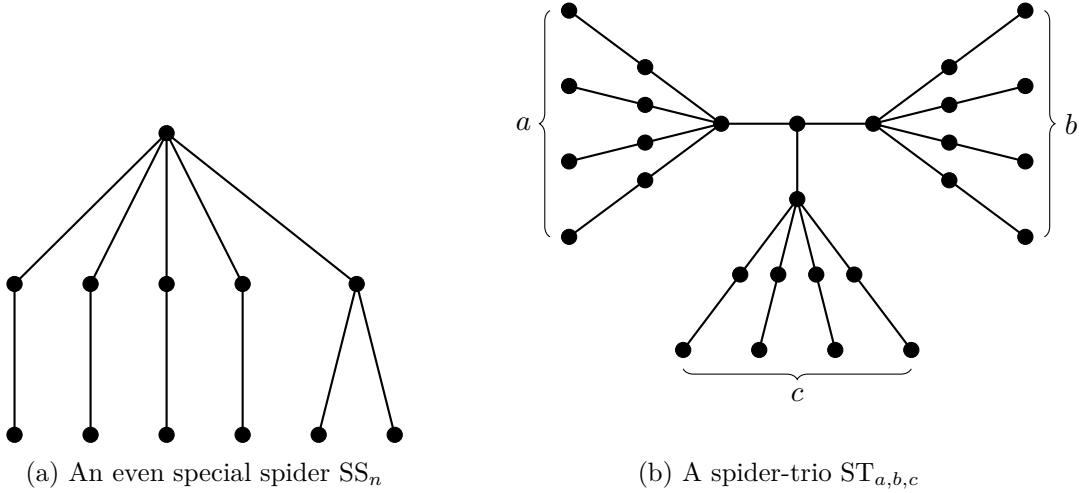
\begin{figure}[ht]
\begin{minipage}[b]{.45\linewidth}
\begin{center}
    \begin{tikzpicture}[scale=0.7]
    {
	\foreach \x in {0,1,2,3,4.5}{\draw[thick] (2,4) -- (\x,2);}				
	\foreach \x in {0,1,2,3}{\draw[thick] (\x,2) -- (\x,0);}
	
    \foreach \x in {0,1,2,3,4.5}{\draw[fill] (\x,2) circle (0.1);}
	\foreach \x in {0,1,2,3,4,5}{\draw[fill] (\x,0) circle (0.1);}
    \draw[fill] (2,4) circle (0.1);
    
    \draw[thick] (4.5,2) -- (4,0);
    
    \draw[thick] (4.5,2) -- (5,0);
	}
	\end{tikzpicture}\\
\subcaption{An even special spider $\SSS_{12}$}
\label{fig:evenspecialspider}
\end{center}
\end{minipage}\quad\begin{minipage}[b]{.45\linewidth}

		\begin{center}
			\begin{tikzpicture}[scale=0.7]
			{
				\foreach \x in {0,1,2,3}{\draw[thick] (0,\x+0.5) -- (2,2);}				
				\foreach \x in {0,1,2,3}{\draw[thick] (4,2) -- (6,\x+0.5);}
				\draw[thick] (4,2) -- (2,2);
				\draw[thick] (3,2) -- (3,1);
				
				\foreach \x in {0,1,2,3}{\draw[fill] (0,\x+0.5) circle (0.1);}
				\foreach \x in {0,1,2,3}{\draw[fill] (1,0.5*\x+1.25) circle (0.1);}
				\foreach \x in {0,1,2,3}{\draw[fill] (6,\x+0.5) circle (0.1);}
				\foreach \x in {0,1,2,3}{\draw[fill] (5,0.5*\x+1.25) circle (0.1);}
				\draw[fill] (4,2) circle (0.1);
				\draw[fill] (2,2) circle (0.1);	
				\draw[fill] (3,2) circle (0.1);
				\draw[fill] (3,1) circle (0.1);
				
				\foreach \x in {0,1,2,3}{\draw[fill] (\x+1.5,-1) circle (0.1);}
				\foreach \x in {0,1,2,3}{\draw[fill] (0.5*\x+2.25,0) circle (0.1);}
				\foreach \x in {0,1,2,3}{\draw[thick] (\x+1.5,-1) -- (3,1);}		
				
				\draw [decorate,decoration={brace,amplitude=4pt},xshift=0pt,yshift=0pt] (-0.25,0.5) -- (-0.25,3.5) node [black,midway,xshift=-0.35cm]{$a$};
				\draw [decorate,decoration={brace,amplitude=4pt},xshift=0pt,yshift=0pt] (6.25,3.5) -- (6.25,0.5) node [black,midway,xshift=0.35cm]{$b$};	
				\draw [decorate,decoration={brace,amplitude=4pt},xshift=0pt,yshift=0pt] (4.5,-1.25)--(1.5,-1.25)  node [black,midway,yshift=-0.35cm]{$c$};

			}
			\end{tikzpicture}
		\end{center}
	\subcaption{A spider-trio $\ST_{4,4,4}$}\label{fig:spider-trio}
\end{minipage}
\caption{Other variants of spiders}\label{fig:adaptedspiders}
\end{figure}

\begin{theorem}\label{thr:even_sapm_gen}
Let $f(n)$ be the maximum number of {\sapm}s in a tree $T$ of order $n$. Then, for $n$ even,
	$$f(n)=
\begin{cases}
\floorfrac{3n}{4} & \text{ if } 2 \le n \le 6,\\
n-3 & \text{ if } 8 \le n \le 14,\\
 \floorfrac{n^2}{16}-1 & \text{ if } 16 \le n \le 28,\\
\floorfrac{(n-4)^2}{12}  & \text{ if } n \ge 30.\\
\end{cases}$$
Equality holds if and only if $T$ is one of the following trees:
$$\begin{cases}
P_2, S_4, \textrm{or } \DB_{2,2} &\text{ for } 2 \le n \le 6,\\
\SSS_n & \text{ if } 8 \le n \le 12,\\
\SSS_{14} \textrm{ or } \ST_{3,2,0} & \text{ if } n=14,\\
\ST_{\ceilfrac{n-4}{4}, \floorfrac{n-4}{4},0}  & \text{ if } 16 \le n \le 26,\\
\ST_{6,6,0} \textrm{ or } \ST_{4,4,4}   & \text{ if } n=28,\\
\ST_{\floorfrac{n}{6}, \floorfrac{n-2}{6},\floorfrac{n-4}{6}}  & \text{ if } n \ge 30.\\
\end{cases}$$
\end{theorem}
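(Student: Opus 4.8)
The plan is to identify each \sapm{} of $T$ with a pair of leaves $\{\ell,\ell'\}$ whose removal leaves a forest admitting a perfect matching, and to organize everything around the unique bipartition $V(T)=X\cup Y$, $|X|\ge|Y|$. Since $n$ is even, $|X|-|Y|$ is an even number $2d$, and removing two vertices can balance the two classes---a prerequisite for a perfect matching---only when $d\le 1$; thus $d\ge 2$ yields no {\sapm}s, and the argument splits into the balanced case $d=0$ and the off-by-two case $d=1$. I would first dispose of the balanced case. If $T$ also has a perfect matching, Theorem~\ref{thr:even_sapm_wpm} caps the count at $\floorfrac{(n-2)^2}{16}$, which a one-line comparison shows is strictly below the claimed $f(n)$ once $n\ge 8$. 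If $T$ is balanced but has no perfect matching, then an \sapm{} avoids one leaf from each class, and I would bound the number of admissible leaf pairs directly via Hall's condition, checking that the count again stays under $f(n)$ for $n\ge 8$. The small orders $2\le n\le 6$ (producing $P_2$, $S_4$, and $\DB_{2,2}$) I would settle by hand.

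The core of the proof is the off-by-two case, where every \sapm{} avoids two leaves of the larger class $X$, and the aim is to show the extremal trees are precisely the spider-trios. The key structural fact is a bipartition count: joining the centres of $k$ branches (subdivided stars, allowing a single pendant leaf as a degenerate branch) to one new vertex separates the two bipartition classes by exactly $k-1$, so the condition $d=1$ singles out \emph{three} branches. To force this shape I would run the leaf-recursion used for Theorems~\ref{thr:odd_apm} and~\ref{thr:even_apm}: take a leaf $\ell$ at the end of a diameter and split the {\sapm}s according to whether $\ell$ is avoided, the two cases reducing to smaller trees whose (strong) almost-perfect matchings are controlled by the earlier theorems. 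Iterating, together with local exchange steps, I would show that without decreasing the count one may reduce to a single core vertex joined to three branches, each a subdivided star whose legs have length two (possibly degenerating to a single pendant leaf). Rooting at the core vertex and deciding which branch absorbs it then gives the clean count
$$\#\{\text{{\sapm}s of }\ST_{a,b,c}\}=\sum_{i=1}^{3}\ \prod_{j\ne i}\max(a_j,1),\qquad (a_1,a_2,a_3)=(a,b,c),$$
where the factor $\max(\,\cdot\,,1)$ records a degenerate branch of size $0$ collapsing to a single leaf that is then forced into the avoided pair. In particular $\SSS_n\cong\ST_{(n-4)/2,\,0,\,0}$, which reconciles the special spider with the spider-trio family.

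It then remains to solve the discrete optimization $\max\sum_{i}\prod_{j\ne i}\max(a_j,1)$ over $a+b+c=\tfrac{n-4}{2}$ with $a\ge b\ge c\ge 0$. Keeping all branches positive gives $ab+bc+ca$, balanced to $\approx\tfrac{(n-4)^2}{12}$; setting $c=0$ gives $(a+1)(b+1)-1$, balanced to $\approx\tfrac{n^2}{16}-1$; and setting $b=c=0$ gives $2a+1=n-3$. Comparing these three expressions produces exactly the phase transitions in the statement---the two-zero and one-zero regimes cross at $n=14$, and the one-zero and all-positive regimes cross at $n=28$---and pins down the optimal integer parameters; the equality cases are then read off from the (essentially unique) balanced maximizers in each regime, with the small exceptional orders recorded separately.

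The step I expect to be the main obstacle is the structural reduction in the off-by-two case: proving rigorously that an extremal tree must be a spider-trio. The exchange and recursion arguments have to control leaf and sibling structure, internal degrees, and pendant-path lengths simultaneously, and they interact delicately with the degenerate branches of size $0$ or $1$---which are exactly what generate the $\max(\,\cdot\,,1)$ correction and drive the thresholds at $n=14$ and $n=28$. Managing this bookkeeping uniformly across the boundary cases, and separately verifying the handful of exceptional small orders, is where the real care will be required.
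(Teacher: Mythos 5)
Your skeleton---the bipartition split into the balanced case $d=0$ and the off-by-two case $d=1$, the identification $\SSS_n\cong\ST_{(n-4)/2,0,0}$, the count $\sum_i\prod_{j\ne i}\max(a_j,1)$ for (possibly degenerate) spider-trios, and the three-regime optimization with crossings at $n=14$ and $n=28$---matches the paper's, and your final optimization step is essentially identical to the paper's (AM--GM in the all-positive regime, $(a+1)(b+1)-1$ with one degenerate branch, $2a+1=n-3$ with two). But there is a genuine gap exactly where you flag one, and it is not mere bookkeeping: you have no argument that an extremal tree in the off-by-two case decomposes into three branches around a core vertex. Your ``key structural fact'' runs in the wrong direction: it shows that joining three branches at a new vertex \emph{produces} $|X|-|Y|=2$, not that $|X|-|Y|=2$ \emph{forces} that shape, and the proposed ``leaf-recursion plus local exchange steps'' is precisely the step you admit you cannot carry out. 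The paper closes this hole with a short counting argument that your proposal is missing: split on whether the smaller class $Y$ contains a leaf. If $Y$ has no leaves, then every vertex of $Y$ has degree at least $2$; since $T$ is bipartite, every edge has exactly one endpoint in $Y$, so $\sum_{v\in Y}\deg(v)=|E(T)|=n-1$, while $|Y|=\frac{n-2}{2}$. Hence exactly one vertex $z\in Y$ has degree $3$ and all others have degree $2$, and deleting $z$ immediately yields the three components $A,B,C$; a parity argument shows an avoided pair cannot lie in a single component, the count is bounded by $ab+bc+ca$ over the leaf numbers, and the equality analysis reduces to Theorem~\ref{thr:odd_sapm} applied componentwise---no exchange argument is needed anywhere. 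If instead $Y$ contains a leaf $y$ with neighbor $x$, then (since avoided pairs lie in $X$) every \sapm{} contains $xy$, and one finishes by induction on $n$ applied to $T\setminus\{x,y\}$, whether it is a tree or a forest. Your plan has no counterpart to this second case either, and without it the reduction to the ``no leaves in $Y$'' structure does not go through.

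Two smaller discrepancies. First, the paper does not obtain the regimes $2\le n\le 6$ and $8\le n\le 14$, nor their extremal trees, from the optimization: it verifies everything up to $n=16$ by computer and runs the structural argument only for $n\ge 18$. This matters because the argument excludes sibling leaves using $f(n)>n$, and the degenerate trios $\SSS_n=\ST_{(n-4)/2,0,0}$ are extremal exactly in that low range, so a purely hand-checked treatment of $8\le n\le 16$ needs more care than ``settle by hand.'' Second, in your balanced case without a perfect matching, Hall's condition is unnecessary: once sibling leaves are excluded there are at most $\frac{n-2}{2}$ leaves, each avoided pair must have one leaf in each class, and the resulting bound $\floorfrac{(n-2)^2}{16}$ already falls below $f(n)$ in the relevant range, which is the one-line count the paper uses.
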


\begin{proof}
    For $n \leq 16$, the values of $f(n)$ have been checked with a computer, so we may assume $n\ge 18$.
	Let $T$ be a tree on $n$ vertices. If two leaves of $T$ share a common neighbor, then by Lemma~\ref{lem:leavesatdistance2=>n_bound}, $T$ has at most $n-1$ {\sapm}s. Since $f(n)>n$ (as evidenced by the trees given in the claim), $T$ cannot be extremal in this case. 
	
	Thus, we may also assume that every leaf has a unique neighbor and so the number of leaves is at most $\frac n2$.
	If $T$ has exactly $\frac n2$ leaves, then $T$ must also contain a perfect matching. By Theorem~\ref{thr:even_sapm_wpm}, $T$ has at most $\floorfrac{(n-2)^2}{16}$ {\sapm}s, which is smaller than both $\floorfrac{n^2}{16}-1$ and $\floorfrac{(n-4)^2}{12}$ when $n \geq 18$.
	
	Now suppose the number of leaves in $T$ is at most $\frac n2-1.$ Let $V(T)=X \cup Y$ be the bipartition of $T$, and without loss of generality, suppose $\abs X \geq \abs Y$. Recall that $T$ containing an almost-perfect matching implies that either $\abs X = \abs Y$ or $\abs X = \abs Y + 2$.
	If $\abs X = \abs Y,$ then every avoided pair of leaves must have one element in $X$ and one element in $Y$. There are at most $\floorfrac{(n-2)^2}{16}$ such pairs, which again is less than the stated upper bounds.
	
	So now we assume that $\abs X = \abs Y+2=\frac n2+1.$
	
	{\bf Case 1:} $Y$ contains no leaves. Then the degree of every vertex in $Y$ is at least $2$. As $\abs Y = \frac{n-2}{2}$, and $\sum_{v \in Y}\deg(v) = n-1$, this implies that all but one vertex in $Y$ has degree 2. Let $z$ be the unique vertex in $Y$ which has degree 3.
	
	In that case $T \setminus \{z\}$ has three components---call them $A, B, C$. Each component is itself a tree whose vertices in $Y$ have degree 2. Thus each of them has one more vertex in $X$ than in $Y$. In particular, each component has an odd number of vertices in total. 
	
	Observe that if a component, say $C$, of $T \setminus \{z\}$ does not contain a member of the avoided pair, then, since $\abs C$ is odd, the \sapm{} must match $z$ with its neighbor in $C$. Thus, an avoided pair of leaves cannot be contained in a single component.

	Let $a,b,c$ be the number of leaves in $A, B$, and $C$ respectively, with $a \ge b \ge c$. Then the number of {\sapm}s is bounded above by $ab+bc+ac.$
	If $B$ and $C$ are singletons, this gives an upper bound of $1+2a< n$.
	If only $C$ is a singleton, then we have $ab+a+b=(a+1)(b+1)-1\le \floorfrac{n^2}{16}-1.$
	Furthermore, we can see that equality is attained only for the tree $\ST_{a,b,0}$ when $a = b = \frac{n}{4}$. Indeed, each \sapm{} of $T$ is a union of {\sapm}s of $A$ and $B$, and the edge from $z$ to $C$. By Theorem~\ref{thr:odd_sapm}, $A$ and $B$ have the maximum number of {\sapm}s if and only if they are both spiders.
	
	Else, each neighbor of $z$ is a non-leaf vertex in $X$, and as such $a+b+c \le \frac{n-4}{2}.$
	By the AM-GM inequality, we know that $ab+bc+ac \le \frac{(a+b+c)^2}{3}$ and since the number of {\sapm}s is an integer, we conclude that $\floorfrac{(n-4)^2}{12}$ is an upper bound.
	By an argument similar to the previous case, the extremal tree is precisely the balanced spider-trio $\ST_{a,b,c}$ where $a\ge b \ge c \ge a-1$. This has exactly $\floorfrac{(n-4)^2}{12}$ {\sapm}s.
	It is now sufficient to note that $\floorfrac{n^2}{16}-1 \le \floorfrac{(n-4)^2}{12} $ if and only if $n \ge 28.$
	
	{\bf Case 2:} There is a leaf $y\in Y$. Let $x$ be the unique neighbor of $Y$.
	Since $\abs X = \abs Y+2$, every avoided pair must be contained in $X$ and so every almost-perfect matching must contain $xy$.
	
	If $\deg(x)=2$, the number of {\sapm}s of $T$ equals the number of {\sapm}s of $T\setminus \{x, y\}$, which is at most $f(n-2)$. It is straightforward to verify algebraically that $ \floorfrac {n^2} {16} - 1$ (for $16 \leq n \leq 28$) and $\floorfrac {(n - 4)^2} {12}$ (for $n \geq 28$) are increasing functions, so the result follows by induction.
	
	If $\deg(x)>2$ and $T\setminus \{x, y\}$ is a forest, the result also follows similarly by induction since $T\setminus \{x, y\}$ is a subgraph of a tree.
	If the components of $T\setminus \{x, y\}$ are all even, with orders $n_1, n_2, \ldots n_k$, then there is some $i \in \{1, \dots, k\}$ such that the component of order $n_i$ has two more vertices in $X$ than in $Y$. This implies that $T$ has at most $f(n_i) < f(n)$ {\sapm}s.
	If two components are odd, we have at most $\floorfrac{(n-2)^2}{16}$ pairs of leaves, one in each odd component. Therefore, there are at most $\floorfrac{(n-2)^{2}}{16} \leq\floorfrac{(n-4)^{2}}{12}$ SAPMs.
\end{proof}

\begin{section}
{Minimum number of maximal matchings}\label{sec:minmaxmatching}
\end{section}

In this section, we characterize trees with the minimum number of maximal matchings. This complements the result of G\'{o}rska and Skupie\'{n}~\cite{GS07} on the maximum number of maximal matchings.

\begin{defi}
An {\em odd special spider} $\SSS_n$ on $n$ vertices for $n$ odd is an even spider $\Sp_{n-1}$ with a pendant edge added to the center vertex (for $n \geq 7$, this is the unique vertex of degree $\geq 3$). See Figure~\ref{fig:oddSSSn}.
\end{defi}

\begin{figure}[ht]
    \centering
    \begin{tikzpicture}[scale=0.7]
    {
	\foreach \x in {0,1,2,3,4}{\draw[thick] (2,4) -- (\x,2);}				
	\foreach \x in {0,1,2,3,4}{\draw[thick] (\x,2) -- (\x,0);}
	
    \foreach \x in {0,1,2,3,4}{\draw[fill] (\x,2) circle (0.1);}
	\foreach \x in {0,1,2,3,4}{\draw[fill] (\x,0) circle (0.1);}
     \foreach \x in {0,2,4}{
     \draw[fill] (\x,4) circle (0.1);
     }
    
    \draw[thick] (2,4) -- (4,4);
    \draw[thick] (2,4) -- (0,4);
	}
	\end{tikzpicture}
    \caption{An odd special spider $\SSS_n$ where $n=13$}
    \label{fig:oddSSSn}
\end{figure}

\begin{customthm} {\bf \ref{thr:minmaxmatchings}}
    A tree $T$ of order $n$ has at least $\lceil \frac n2 \rceil $ maximal matchings. 
    Equality holds if and only if $T$ is a spider $\Sp_n$, or an odd special spider $\SSS_n$.
\end{customthm}

\begin{proof}
    We proceed by induction on $n$. A small computer verification confirms the claim for $n\le 7$ (the paths $P_n$ are also spiders $\Sp_n$ when $n \le 5$).
    Now let $n \ge 8$. 
    We consider two cases. First, suppose there are two leaves $u_1, u_2$ with a common neighbor $v$. 
    Let $T' = T\setminus \{u_1, u_2\}$. Observe that every maximal matching $M$ of $T'$ can be extended to a maximal matching of $T$. If $M$ covers $v$, then $M$ is also a maximal matching of $T$. If $M$ does not cover $v$, then we can extend $M$ with $u_1v$ or with $u_2v$. Thus, $T$ has at least as many maximal matchings as $T'$.
    
    In the second case, no two leaves share a common neighbor. Then there must exist a leaf $u$ whose unique neighbor $v$ has degree 2 (for example, suppose $T$ is rooted at an arbitrary vertex $\rho$ and take $u$ to be a leaf at maximum depth).
    Let $T' = T \setminus \{u, v\}$ and let $x$ be the other neighbor of $v$. As before, every maximal matching $M$ of $T'$ can be extended to a maximal matching of $T$ and so $T$ has at least as many maximal matchings as $T'$. Indeed, if $M$ covers $x$, the unique extension is given by $M \cup \{uv\}$. If $M$ does not cover $x$, we can extend using either $uv$ or $vx$. 

    Moreover, in both cases, we can always find a maximal matching of $T$ that uses a particular edge not contained in $T'$ (either $u_1v$ or $u_2v$ in the first case, and either $uv$ or $vx$ in the second case). This shows that $T$ has strictly more maximal matchings than $T'$, and we conclude by induction.

    For equality to be attained by $T$, equality needs to be attained by $T'$ and thus $T'$ is a spider or an odd special spider. Note that $T'$ has at least two legs of length $2$ and thus $T$ has at least one leg of length $2$, say with leaf vertex $y$ and neighbor $z$. By applying the inductive hypothesis to $T \setminus \{y,z\}$, we know that $T \setminus \{y,z\}$ has to be either $\Sp_{n-2}$, or $\SSS_{n-2}$ if $n$ is odd. We can then conclude that $T$ itself is $\Sp_{n}$ or $\SSS_{n}$, and these indeed attain equality.
\end{proof}

Initially, this question about determining the minimum number of maximal matchings was motivated by the following observation about maximal matchings, which we will utilize in applications to the weighted Hosoya index. We say the {\em degree sum} of a maximal matching $M$ to mean $\sum_{v \in V(M)} \deg(v)$. 

\begin{lemma}\label{lem:sumdeg_maxmatching}
If $n \geq 2$, then the degree sum of a maximal matching $M$ in a tree $T$ of order $n$ is at least $n$.
Equality occurs only if $T$ is a star, or $T$ has diameter $3$ and $M$ is the central edge.
\end{lemma}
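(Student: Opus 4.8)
The plan is to read ``the sum of degrees in a maximal matching $M$'' as $\sum_{v \in V(M)} \deg_T(v)$, where $V(M)$ denotes the set of vertices covered by $M$ (equivalently $\sum_{uv \in M}(\deg_T(u)+\deg_T(v))$, since $M$ is a matching), and to evaluate this quantity by counting edge by edge. The crucial observation is that, because $M$ is maximal, $V(M)$ is a \emph{vertex cover} of $T$: every edge of $T$ has at least one endpoint in $V(M)$, since otherwise that edge could be added to $M$, contradicting maximality.

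First I would split the edges of $T$ according to how many of their endpoints lie in $V(M)$. Let $a$ be the number of edges with both endpoints in $V(M)$ and $b$ the number with exactly one; as $V(M)$ is a vertex cover, no edge has zero endpoints in $V(M)$, so $a+b=|E(T)|=n-1$. Counting endpoint-incidences then gives
$$\sum_{v \in V(M)} \deg_T(v) = 2a + b = a + (n-1).$$
Every edge of $M$ has both endpoints in $V(M)$, so $a \ge |M|$, and $|M| \ge 1$ because $T$ has an edge (as $n \ge 2$) and a maximal matching cannot be empty. Hence $a \ge 1$ and the degree sum is at least $n$, which is the claimed bound.

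For equality, the identity above forces $a = 1$, hence $|M| = 1$; writing $M = \{uv\}$, we have $V(M) = \{u,v\}$, and the only edge with both endpoints in $\{u,v\}$ is $uv$ itself, so $a = 1$ is in fact automatic once $|M|=1$. Thus equality holds exactly when $M$ is a single edge $uv$ whose endpoints form a vertex cover of $T$. I would then unpack this: any vertex $w \notin \{u,v\}$ can have neighbors only in $\{u,v\}$ (an incident edge must meet the cover), and $w$ cannot be adjacent to both $u$ and $v$ since $T$ is acyclic; so each such $w$ is a leaf attached to exactly one of $u,v$. If all these leaves hang off a single one of $u,v$, then $T$ is a star; otherwise both $u$ and $v$ have degree at least $2$, so $T$ is a double broom of diameter $3$ and $M=\{uv\}$ is its central edge. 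In both situations a direct check confirms the degree sum equals $n$.

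The heart of the argument is just the double-counting together with the vertex-cover property of a maximal matching, so I do not anticipate any real difficulty in the bound itself. The step that needs the most care is the equality analysis: one must confirm that the condition $a=1$ truly narrows the extremal configurations down to stars and diameter-$3$ double brooms (with $M$ central), rather than admitting some broader family, which is where the acyclicity of $T$ is used decisively.
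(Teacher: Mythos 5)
Your proof is correct, but it runs along a genuinely different line from the paper's. The paper uses maximality to show that the neighborhoods of the matched vertices cover $V$ (every uncovered vertex has a covered neighbor, and matched vertices see their partners), so $\sum_{v \in V(M)} \deg(v) = \sum_{v \in V(M)} |N(v)| \geq |V| = n$; for equality it then argues that every vertex has exactly one matched neighbor, deduces that all non-leaves are covered, rules out diameter at least $4$ along a diameter path, and finishes by inspecting the two kinds of maximal matchings in diameter-$3$ trees. You instead use maximality to show that $V(M)$ is a \emph{vertex cover} of $E(T)$ and count edge--endpoint incidences, getting the exact identity $\sum_{v \in V(M)} \deg(v) = (n-1) + a$, where $a$ is the number of edges with both ends in $V(M)$. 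This is in fact a sharper statement than the lemma's bound, since $a \geq |M|$ gives $\sum_{v\in V(M)}\deg(v) \geq (n-1)+|M|$, and it makes the equality analysis nearly automatic: equality forces $a = 1$, hence $|M| = 1$, and the structure of a tree whose two matched endpoints form a vertex cover collapses (via acyclicity) to a star or a diameter-$3$ double broom with $M$ the central edge, with no diameter case distinction needed. In short, the paper's argument is ``matched vertices dominate $V$,'' yours is ``matched vertices cover $E$''; both are short for the inequality, but your edge-counting identity does more of the work in the equality case and yields the refined bound $(n-1)+|M|$ as a bonus.
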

Note that the statement is trivially false for a $1$-vertex tree, so $n \geq 2$ is required.
\begin{proof}
    Let $M$ be a maximal matching. If $v \in V$ is not covered by $M$, then there must exist some $u \in N(v)$ that is covered by $M$; else, $M \cup \{uv\}$ is a larger matching, contradicting the maximality of $M$. This implies that $\bigcup\{N(u) : u \text{ is covered by }M\} = V$, from which the statement follows.
    
    Equality can occur only if every vertex has precisely one neighbor covered by the matching. By the argument above, for every $v$ not covered by $M$, we must have all of $N(v)$ covered by $M$. This implies that all non-leaf vertices of $T$ are covered by $M$. If the diameter of $T$ is at least $4$, then there exist internal vertices of a diameter path which have more than one neighbor covered by $M$, and so equality does not occur. If $T$ has diameter $3$, there are only two types of maximal matchings and we conclude easily.
\end{proof}

As a corollary of Theorem~\ref{thr:minmaxmatchings} and Lemma~\ref{lem:sumdeg_maxmatching}, we have the following result on the weighted Hosoya index. This is a special case of a more general theorem that we prove in the next section. Recall that $S_n$ denotes the $n$-vertex star.
 
\begin{corollary}\label{exponential}
    Let $\phi(i,j) = c^{i+j}$ where $c \geq 2$. Then for every $n$-vertex tree $T$, we have $Z_\phi(S_n) \leq Z_\phi(T)$.
\end{corollary}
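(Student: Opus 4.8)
The plan is to compute $Z_\phi(S_n)$ exactly and then show every other $n$-vertex tree has weighted Hosoya index at least this large, by extracting enough weight from the maximal matchings alone. Since all $n-1$ edges of the star meet its center, every nonempty matching of $S_n$ is a single edge of weight $c^{(n-1)+1}=c^n$, so $Z_\phi(S_n)=1+(n-1)c^n$. For a general tree $T$ I would first rewrite the weight of a matching in terms of covered degrees: the weight of $M$ is $\prod_{uv\in M}c^{\deg u+\deg v}=c^{D(M)}$, where $D(M)=\sum_{v \text{ covered by } M}\deg_T(v)$. Thus $Z_\phi(T)=\sum_M c^{D(M)}$, and since the empty matching contributes $1$, it suffices to show $\sum_{M\neq\varnothing}c^{D(M)}\ge (n-1)c^n$.

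The key step, and the main obstacle, is that Lemma~\ref{lem:sumdeg_maxmatching} in its stated form (every maximal matching has $D(M)\ge n$) is not by itself strong enough: combined with the lower bound of $\lceil n/2\rceil$ maximal matchings from Theorem~\ref{thr:minmaxmatchings}, it only yields roughly $\tfrac{n}{2}c^n$, which falls short of $(n-1)c^n$ by a factor of about two. What is needed is the sharper bound $D(M)\ge n-1+|M|$ for every maximal matching $M$, reflecting that a tree with few maximal matchings must compensate with larger ones. I would prove this by noting that the uncovered vertices of a maximal matching form an independent set, so the number of edges meeting them equals $\sum_{u \text{ uncovered}}\deg_T(u)$; since the $|M|$ matching edges are disjoint from this set, $\sum_{u \text{ uncovered}}\deg_T(u)\le (n-1)-|M|$, and hence $D(M)=2(n-1)-\sum_{u \text{ uncovered}}\deg_T(u)\ge n-1+|M|$.

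With this refinement in hand the rest is a short computation. Because $c\ge 2$ gives $c^{k}\ge ck$ for every integer $k\ge 1$ (equivalently $c^{k-1}\ge 2^{k-1}\ge k$), each maximal matching satisfies $c^{D(M)}\ge c^{n-1+|M|}\ge c^{n}\,|M|$. Summing over all maximal matchings and switching the order of summation, $\sum_{M \text{ maximal}}c^{D(M)}\ge c^{n}\sum_{M \text{ maximal}}|M|=c^{n}\sum_{e\in E(T)}\#\{M \text{ maximal}: e\in M\}\ge c^{n}(n-1)$, where the final inequality holds because every edge $e$ extends to at least one maximal matching containing it. Since the maximal matchings are distinct nonempty matchings and all weights are positive, $Z_\phi(T)\ge 1+\sum_{M \text{ maximal}}c^{D(M)}\ge 1+(n-1)c^{n}=Z_\phi(S_n)$, as desired; tracing the chain of inequalities (each maximal matching a single edge, $|M|=1$, and every edge incident to the center) shows that the star attains equality.
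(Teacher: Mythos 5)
Your proof is correct, but it takes a genuinely different route from the paper's. The paper combines two of its own results: Theorem~\ref{thr:minmaxmatchings} (every tree has at least $\lceil n/2\rceil$ maximal matchings) and Lemma~\ref{lem:sumdeg_maxmatching} (every maximal matching has degree sum at least $n$, strictly greater except in two cases), and then uses $c\ge 2$ to upgrade roughly $\frac n2$ matchings of weight $c^{n+1}$ into the needed $(n-1)c^n$. You bypass Theorem~\ref{thr:minmaxmatchings} entirely and instead prove a sharper, size-sensitive version of Lemma~\ref{lem:sumdeg_maxmatching}: since the uncovered vertices of a maximal matching form an independent set, $\sum_{u \text{ uncovered}}\deg(u)\le (n-1)-|M|$, whence $D(M)\ge n-1+|M|$; this is exactly the trade-off (fewer maximal matchings forces larger ones) that the paper instead obtains from its counting theorem. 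Your double-counting step $\sum_{M\text{ maximal}}|M|=\sum_{e}\#\{M\ni e\}\ge n-1$, using only that every edge extends to a maximal matching, then replaces the lower bound on the number of maximal matchings, and $c\ge 2$ enters through $c^{|M|}\ge c\,|M|$ rather than through a crude doubling. What each approach buys: the paper's proof is short given the machinery it has already built (and that machinery is of independent interest there), while yours is self-contained and elementary --- it would prove the corollary even in a paper that never established Theorem~\ref{thr:minmaxmatchings}. Both arguments use $c\ge 2$ in an essential way and both identify the star's matchings as the equality configuration; your closing parenthetical about tracing equality is fine but unnecessary, since the statement only asks for the inequality. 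One pedantic point: for $n\ge 2$ every maximal matching in a tree is nonempty, so your ``$1+\sum_{M\text{ maximal}}$'' bound has no double counting, but you should say you are assuming $n\ge 2$ (the case $n=1$ being trivial), since for a single vertex the empty matching is itself the unique maximal matching.
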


\begin{proof}
    First note that $S_n$ has $n-1$ nonempty matchings, each with weight $c^n$, so $Z_{\phi}(S_n)=(n-1)c^n+1$.
    For a tree $T$ that is not a star, by Theorem~\ref{thr:minmaxmatchings}, $T$ has at least $\lceil \frac{n}{2} \rceil$ maximal matchings. By Lemma~\ref{lem:sumdeg_maxmatching},  each maximal matching has degree sum strictly greater than $n$, with possibly one exception which has degree sum exactly equal to $n$. Thus we have 
    $Z_{\phi}(T)> \left(\ceilfrac{n}{2}-1\right) c^{n+1}+c^n+1 \ge (n-1)c^n+1=Z_{\phi}(S_n)$.
    The first inequality is strict since there are pendant edges which are not maximal matchings and contribute in the sum as well. The second inequality holds by our assumption $c \geq 2$.
\end{proof}

\begin{section}
{Extremal values of the weighted Hosoya index}\label{sec:HosoyaIndex}
\end{section}

Recall that the weighted Hosoya index is defined to be
$$Z(T, \omega) = \sum_{M \in \mathcal{M}(T)} \prod_{e \in M} \omega(e)$$
where the weight of the empty matching is $1$.
In this section, we characterize the extremal examples for some natural choices of weights that are vertex-degree-based, meaning $\omega(e) = \phi(i,j)$ where $i$ and $j$ are the degrees of the endpoints of $e$. We then write $Z(T, \omega)$ as $Z_\phi(T)$. 

We consider the following weight functions for $c > 0$:
\begin{align*} \phi_1(i,j) &= c^{i+j},\\
\phi_2(i,j) &= c^{ij},\\
\phi_3(i,j) &= (i+j)^c,\\
\phi_4(i,j) &= (ij)^c.
\end{align*}

Note that these examples are all symmetric in $i$ and $j$. It was shown in~\cite{CRUZ2022} that for $c < 0$, the star minimizes the weighted Hosoya index for $\phi_3$ and $\phi_4$ (also referred to as the general Randi\'{c} index and general sum-connectivity index, respectively). 
This can also be derived from the following proof that the star minimizes the weighted Hosoya index for $c>0$.

\begin{theorem}\label{thr:starminZ_phi}
Let $c > 0$. Then for every $n$-vertex tree $T$, we have $Z_{\phi_\ell}(T) \geq Z_{\phi_\ell}(S_n)$ for all $\ell \in \{1,2,3,4\}$. Equality holds in each case if and only if $T = S_n$.
\end{theorem}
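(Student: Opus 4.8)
The plan is to compute $Z_{\phi_\ell}(S_n)$ explicitly and then show $Z_{\phi_\ell}(T) > Z_{\phi_\ell}(S_n)$ for every tree $T \neq S_n$ by strong induction on $n$. Since all $n-1$ edges of $S_n$ join the centre (degree $n-1$) to a leaf (degree $1$), and no two of them are disjoint, only the empty matching and the $n-1$ single edges contribute, so $Z_{\phi_\ell}(S_n) = 1 + (n-1)\phi_\ell(n-1,1)$; it therefore suffices to show that the non-empty matchings of any other tree contribute strictly more than $(n-1)\phi_\ell(n-1,1)$. The engine of the induction is the leaf-deletion identity: if $u$ is a leaf with neighbour $v$ of degree $d$, then $Z_{\phi_\ell}(T) = A + \phi_\ell(1,d)\,B$, where $A$ collects the matchings avoiding $u$ and $B$ the matchings of $T \setminus\{u,v\}$.

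The feature distinguishing this from an ordinary Hosoya-index induction is that the weights are vertex-degree-based and hence \emph{non-local}: deleting $u$ lowers $\deg v$ by one, so $A$ is not literally $Z_{\phi_\ell}(T\setminus u)$ (the latter uses the smaller degree of $v$), and similarly for $B$. I would neutralise this by monotonicity. For $\phi_3,\phi_4$ (every $c>0$) and for $\phi_1,\phi_2$ when $c\ge 1$, each $\phi_\ell$ is nondecreasing in both arguments; a term-by-term comparison then gives $A \ge Z_{\phi_\ell}(T\setminus u)$ and $B \ge Z_{\phi_\ell}(T\setminus\{u,v\})$, because every matching receives a weight at least as large when the degrees of $v$ and its neighbours are read off in $T$ rather than in the smaller graph. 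Applying the inductive hypothesis to the tree $T\setminus u$ and, when $\deg v = 2$, to the tree $T\setminus\{u,v\}$, reduces the statement to a numeric inequality between $Z_{\phi_\ell}(S_{n-1})$, $Z_{\phi_\ell}(S_{n-2})$ and $Z_{\phi_\ell}(S_n)$, which I would verify using the convexity and monotonicity of $t\mapsto \phi_\ell(t,1)$, invoking Lemma~\ref{lem:karamata} where degree-weighted sums must be compared. Strictness, hence uniqueness of $S_n$, comes for free: any $T\neq S_n$ has a matching of size $2$, contributing a positive amount absent in the star.

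For the decreasing regime, $\phi_1$ and $\phi_2$ with $c<1$, monotonicity points the wrong way, but a direct per-edge bound is clean for $\phi_1$: for any edge $uv$ of an $n$-vertex tree, $u$ and $v$ have no common neighbour, so $\{u,v\}$ together with the remaining neighbours of $u$ and of $v$ are $\deg u+\deg v$ distinct vertices, giving $\deg u+\deg v\le n$; since $c<1$ this yields $\phi_1(\deg u,\deg v)=c^{\deg u+\deg v}\ge c^{n}=\phi_1(n-1,1)$ for every edge, so the single-edge matchings alone already give at least $(n-1)c^n$ and the higher matchings finish the bound. (This mirrors Corollary~\ref{exponential}, which settles $\phi_1$ for $c\ge 2$ via the maximal-matching count of Theorem~\ref{thr:minmaxmatchings}.) I expect the genuine obstacles to be twofold. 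First, the case $\deg v\ge 3$: then $T\setminus\{u,v\}$ is a \emph{forest}, and the naive inductive bound fails, since the star inequality is false for forests (for instance $S_{n-1}$ plus an isolated vertex has smaller weighted index than $S_n$); here $B$ must be bounded below directly, exploiting that the neighbours of $v$ inherit large degrees from $T$. Second, the intermediate range $1<c<2$ for $\phi_1,\phi_2$, and the products $\phi_2,\phi_4$ generally, where neither a maximal-matching estimate nor a per-edge estimate closes and one must retain enough size-$\le 2$ matchings in the lower bound. Carrying these through uniformly over all four weight functions is where the bulk of the work lies.
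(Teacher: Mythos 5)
There is a genuine gap, and it sits exactly where you placed your ``bulk of the work'': the induction hypothesis you propose is too weak, and the monotonicity step discards precisely the quantity needed to close it. Bounding $A \ge Z_{\phi_\ell}(T\setminus u)$ and invoking the hypothesis yields only $Z_{\phi_\ell}(S_{n-1})$, whose leading term (for $\phi_1$, say) is $(n-2)c^{n-1}$, short of $(n-1)c^n$ by a factor of order $c$; whether $\phi_\ell(1,d)B$ can recover the deficit depends delicately on which leaf you delete. Concretely, in the broom $\DB_{n-3,1}$, deleting a leaf $u$ attached to the degree-$(n-2)$ centre reduces your scheme (after dividing by $c^{n-1}$) to $(n-2)+c^2 \ge (n-1)c$, which factors as $(c-1)\bigl(c-(n-2)\bigr)\ge 0$ and fails for \emph{every} $c\in(1,n-2)$. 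A different leaf happens to rescue this tree, but you give no selection rule and no proof that a good choice always exists; and in the branch $\deg v\ge 3$ --- unavoidable for trees in which every support vertex has at least two leaf neighbours, such as double brooms --- you yourself note that $T\setminus\{u,v\}$ is a forest where the star inequality is false, and then say $B$ ``must be bounded below directly,'' which is the unproven heart of the matter, not a detail. Likewise $\phi_2$ with $c<1$ is genuinely open in your plan: the per-edge bound that works for $\phi_1$ (and which coincides with the paper's own parenthetical remark for that case) fails for $\phi_2$, since $\deg u\cdot\deg v$ can be as large as $\lfloor n^2/4\rfloor \gg n-1$, making some edge weights far \emph{smaller} than $\phi_2(n-1,1)$.

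The paper's proof supplies exactly the missing idea: it strengthens the statement with an auxiliary degree-increment parameter instead of absorbing the non-locality by monotonicity. Defining $Z_\phi(T,v,k)$ as the index in which $\deg(v)$ is read as $\deg(v)+k$, it proves $Z_{\phi_\ell}(T,v,k)\ge Z_{\phi_\ell}(S_n,x,k)$ for all $k\ge 0$, where $v$ is a non-leaf vertex with a leaf neighbour and $x$ is the star's centre. The decomposition is taken at $v$, removing all $s$ of its leaf neighbours at once:
\begin{equation*}
Z_\phi(T, v, k) \;=\; Z_\phi\bigl(T\setminus \{u_1, \dots,u_s\},\, v,\, s+k\bigr) \;+\; s\,\phi\bigl(1, \deg(v)+k\bigr)\prod_{i=1}^t Z_\phi(T_i, x_i, 1),
\end{equation*}
an \emph{exact} identity: the increments $s+k$ and $1$ record the true degrees in $T$, so no weight is lost, and your forest obstruction dissolves into a product over the components $T_i$, each handled by the strengthened hypothesis with the bump $1$ at its root $x_i$. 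The comparison with the star, carried out with the same parameter $k$, then reduces to inequalities such as $c^{2t}\prod_{i=1}^t(|T_i|-1)+c^{\,s+t+1-n}\ge 1$, verified uniformly for all $c>0$ by splitting into $c\ge 1$ (first term $\ge 1$) and $c<1$ (second term $\ge 1$, using $n\ge s+1+2t$) --- which simultaneously settles your ``intermediate range'' worry and $\phi_2$ with $c<1$. Without some such strengthening of the inductive statement, your reduction to a numeric inequality among $Z_{\phi_\ell}(S_{n-1})$, $Z_{\phi_\ell}(S_{n-2})$, $Z_{\phi_\ell}(S_n)$ does not go through.
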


\begin{proof}
Let $Z_\phi(T, v, k)$ be the weighted Hosoya index of $T$ where the weight of $v$ is increased by $k$. That is, if $uv = e \in E(T)$ for some $u$, then $\omega(e) = \phi(\deg(u), \deg(v)+k)$. We will use induction to prove a stronger statement:
\begin{claim}
Let $T$ be a tree on $n$ vertices, and let $v \in V(T)$ be a non-leaf vertex that has at least one leaf neighbor. For any nonnegative integer $k$ and $\ell \in \{1,2,3,4\}$, $Z_{\phi_\ell}(T, v, k) \geq Z_{\phi_\ell}(S_n, x, k)$ where $x$ is the center vertex of the star. 
\end{claim}
Observe that taking $k = 0$ gives the theorem statement.

We proceed by induction on $n$. We can check the first non-trivial base case ($n=4$) directly to see that the claim holds true for all $k \geq 0$ and all $c > 0$. 

Now suppose that for every $n' < n$, the claim holds for all $k \geq 0$.

Let $T$ be an $n$-vertex tree different from $S_n$, and let $k$ be a nonnegative integer.
Let $v \in T$ be a non-leaf vertex. Let $u_1, \dots, u_s$ be the $s \ge 1$
leaf neighbors of $v$ and let $x_1, \dots, x_t$ be the non-leaf neighbors.
Let $T_i$ be the component of $x_i$ in $T \setminus \{v, u_1, \dots, u_s\}$.
Then
\begin{equation}\label{eqn:hosoya-cpts}
Z_\phi(T, v, k) = Z_\phi(T\setminus \{u_1, \dots,u_s\}, v, s+k) + s \phi(1, \deg(v)+k)\prod_{i=1}^t Z_\phi(T_i, x_i, 1).
\end{equation}
Given a star graph $S_n$, let $z_n$ denote the center vertex.

First consider $\phi_1(i,j) = c^{i+j}$. We can calculate that $Z_{\phi_1}(S_n, z_n, k) = (n-1)c^{n+k}+1$. By \eqref{eqn:hosoya-cpts} and the inductive hypothesis,
\begin{align*} Z_{\phi_1}(T, v, k) &\geq Z_{\phi_1}(S_{n-s},z_{n-s}, s+k) + s\phi_1(1, s+t+k)\prod_{i=1}^t Z_{\phi_1}(S_{|T_i|},z_{|T_i|}, 1)\\
&\geq (n-s-1)c^{(n-s)+(s+k)}+1+sc^{(s+t)+k+1}\prod_{i=1}^t ((|T_i|-1)c^{|T_i|+1}+1).
\end{align*}
Observe that $\sum_{i=1}^t |T_i| = n-s-1$, so we have
\begin{align*}
Z_{\phi_1}(T, v, k) &\geq (n-s-1)c^{n+k}+1+sc^{s+t+k+1}\left(c^{(n-s-1)+t}\prod_{i=1}^t (|T_i|-1)+1\right)\\
&= (n-s-1)c^{n+k}+1+sc^{n+2t+k}\prod_{i=1}^t (|T_i|-1)+sc^{s+t+k+1}.\\
\end{align*}

We want the last expression to be at least $(n-1)c^{n+k}+1$. Recall that $s \geq 1$ by assumption. Dividing both sides of the desired inequality by $sc^{n+k}$ and rearranging, it suffices to show
$$c^{2t} \prod_{i=1}^t (|T_i|-1) +c^{s+t+1-n}\geq 1.$$
If $T$ is not isomorphic to $S_n$, then $t \geq 1$ and $|T_i| \geq 2$ for all $i$, so when $c \geq 1$, the first term is at least 1 and the inequality holds. The second term is also positive, so the inequality must be strict in this case. 

When $c < 1$, we use that $n = (s+1)+\sum_{i=1}^t |T_i| \geq s+1+2t$, so the second term is at least $1$, and as before, the inequality holds and is strict for $T \neq S_n$. 

(As a remark, note that a more direct argument works for $\phi_1$ in the case $c < 1$. Since $T$ is a tree, $\deg(u)+\deg(v) \leq n$ for every edge $uv$ in $T$. Thus, $Z_{\phi_1}(T) \geq (n-1)c^n + 1$ where equality holds if and only if $T$ is the star. However, this argument does not apply to the case of, for example, $\phi_2(i,j) = c^{ij}$.)

The calculations are almost identical for $\phi_2, \phi_3$, and $\phi_4$, so we omit many of the details here. Each computation begins by applying \eqref{eqn:hosoya-cpts} and the inductive hypothesis.

For $\phi_2$, we have $Z_{\phi_2}(S_n, z_n, k) = (n-1)c^{n+k-1}+1$ whereas for general $T$, we have
$$Z_{\phi_2}(T, v, k) \geq (n-s-1)c^{n+k-1}+1+sc^{n+t+k-1}\prod (|T_i|-1) + sc^{s+t+k}$$
and similarly to the case of $\phi_1$, we can show that this is at least $(n-1)c^{n+k-1}+1$ with equality if and only if $8T = S_n$. 

For $\phi_3$, we have $Z_{\phi_3}(S_n, z_n, k) = (n-1)(n+k)^c + 1$ and on the other hand
$$Z_{\phi_3}(T, v, k) \geq (n-s-1) (n+k)^c + 1 + s (s+t+k+1)^c \prod (|T_i|+1)^c .$$
Since $n+k < n+k+t= (s+t+k)+\sum |T_i|+1$ and each of the terms $(s+t+k)$ and $\sum |T_i|$ is at least $2$,
we know that $n+k \le (s+t+k+1)\prod (|T_i|+1)$ and so 
$$Z_{\phi_3}(T,v,k) \geq (n-s-1)(n+k)^c + 1 + s(n+k)^c = (n-1)(n+k)^c + 1$$ as desired.

Lastly, for $\phi_4$, we have $Z_{\phi_4}(S_n, z_n, k) = (n-1)(n+k-1)^c+1$ and
$$Z_{\phi_4}(T, v, k) \geq (n-s-1) (n+k-1)^c + 1 + s (s+t+k)^c \prod (|T_i|)^c.$$
Similar to the previous case, the result follows from 
$n+k-1 \le (s+t+k)\prod |T_i|$.
\end{proof}

From this and previous work on the Hosoya index, one might be tempted to conjecture that the path and the star are always extremal, even if not uniquely so. It turns out this is not the case. Recall the definitions of the double-broom $\DB_{a,b}$ (\ref{def:DB}) and the wide spider $W_n$ (\ref{def:wide-spider}).

\begin{theorem}\label{thr:exponential_max}
For $c$ sufficiently large in terms of $n$, the tree that (uniquely) maximizes $Z_{\phi_\ell}(T)$ among all trees of order $n$ is
\begin{itemize}
    \item\label{thm:exp_max1}$\ell=1$: $W_n$ if $n\ge 12$ is even and $\Sp_n$ if $n \geq 7$ is odd,
    \item\label{thm:exp_max2}$\ell = 2$: $\DB_{\floorfrac{n}{2}-1,\ceilfrac n2 -1}$ if $n \ge 6,$
    \item\label{thm:exp_max34}$\ell \in \{3,4\}$: $P_n$. 
\end{itemize}
\end{theorem}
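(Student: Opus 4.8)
The plan is to analyze each weight function separately, using the structure that for large $c$ the weighted Hosoya index is dominated by the matchings of maximum weight, so we must identify which trees contain matchings whose weight grows fastest. For a vertex-degree-based weight $\phi(i,j)$ and a matching $M$, the weight $\prod_{e \in M}\phi(i,j)$ is governed by the degree pairs along $M$; taking $c \to \infty$, the leading behavior is controlled by a single ``best'' matching, so the comparison between two trees reduces (to leading order in the relevant exponent) to comparing the maximum over matchings of the relevant degree statistic. Concretely, for $\phi_1(i,j)=c^{i+j}$ and $\phi_2(i,j)=c^{ij}$, the logarithm base $c$ of a matching's weight is $\sum_{uv \in M}(\deg u + \deg v)$ and $\sum_{uv \in M}\deg u \deg v$ respectively, while for $\phi_3,\phi_4$ the weight is polynomial in $c$ and one instead compares the product $\prod(\deg u + \deg v)$ or $\prod(\deg u \deg v)$ raised to the power $c$, again dominated as $c \to \infty$ by the matching maximizing that product. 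The first step is therefore to set up, for each $\ell$, the correct ``leading-order functional'' $\Phi_\ell(T) = \max_{M}(\text{statistic of }M)$ and argue that $\arg\max_T Z_{\phi_\ell}(T)$ coincides with $\arg\max_T \Phi_\ell(T)$ for $c$ large, with ties among matchings or trees broken by lower-order terms (the count of matchings achieving the maximal statistic).

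For $\ell=1$ the relevant quantity is the largest possible degree sum of a matching. Here the handshaking lemma gives $\sum_v \deg(v) = 2(n-1)$, and a matching contributes $\sum_{uv \in M}(\deg u + \deg v)$, which is the total degree of the covered vertices; maximizing this means covering the high-degree vertices while using as many edges as possible. I would argue that a single high-degree vertex is inefficient because only one incident edge enters any matching, so the optimum spreads degree out and, crucially, wants \emph{many} edges each incident to degree-$2$ (or moderate) vertices — this is exactly the structure of the spider $\Sp_n$ (odd $n$) and the balanced wide spider $W_n$ (even $n$), whose maximum-degree-sum matchings are the strong almost-perfect matchings counted in Theorems~\ref{thr:odd_sapm} and~\ref{thr:even_sapm_wpm}. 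The tie-breaking among trees achieving the same leading exponent is then settled by the \emph{number} of such maximum-weight matchings, which is precisely where the earlier extremal counts for \sapm's become the decisive lower-order term; this is the point at which $n\ge 7$ (odd) and $n \ge 12$ (even) enter.

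For $\ell=2$, $\phi_2(i,j)=c^{ij}$, the leading exponent is $\max_M \sum_{uv\in M}\deg u\deg v$, and because products of degrees reward concentration rather than spreading, the optimum is achieved by placing one edge between the two highest-degree vertices possible — this is the central edge of a double broom, whose endpoints have degrees $\lfloor n/2\rfloor$ and $\lceil n/2\rceil$, giving the maximum product $\lfloor n/2\rfloor\lceil n/2\rceil$. I would show no tree can beat this product (using that the two degrees along any single edge sum to at most $n$, so their product is maximized at the balanced split) and that $\DB_{\lfloor n/2\rfloor-1,\lceil n/2\rceil-1}$ is the unique maximizer once lower-order contributions are accounted for. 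For $\ell\in\{3,4\}$ the weights are polynomial in $c$, so the dominant term is the single matching maximizing $\prod(\deg u+\deg v)$ or $\prod(\deg u\deg v)$ to the $c$; since every factor is minimized at $2$ (the path's degrees) yet we are \emph{maximizing} a product over as many edges as possible, I expect a convexity/AM–GM argument, combined with the degree-sequence majorization already used in Theorem~\ref{thr:maxmatch}, to show $P_n$ wins — here the path maximizes the \emph{number} of edges in a single matching ($\lfloor n/2\rfloor$) and keeps all factors equal to the feasible value, and any deviation either shortens the matching or unbalances the factors unfavorably.

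The main obstacle I anticipate is making the ``$c$ large'' domination rigorous and uniform, and correctly handling ties at the leading exponent. It is not enough to compare the single largest-weight matching: two different trees may achieve the same maximal degree statistic, in which case the winner is decided by how many matchings attain it (and then, potentially, by successively lower strata of matchings). I would therefore prove a clean lemma of the form: if $\Phi_\ell(T_1) > \Phi_\ell(T_2)$ then $Z_{\phi_\ell}(T_1) > Z_{\phi_\ell}(T_2)$ for all sufficiently large $c$; and if $\Phi_\ell(T_1)=\Phi_\ell(T_2)$ then the comparison for large $c$ is governed by the number of maximum-weight matchings, reducing the problem to the \sapm{} counts from Section~\ref{sec:sapm}. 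The delicate part is establishing uniqueness of the maximizer, which requires showing the extremal tree strictly dominates \emph{every} competitor either at the leading exponent or at the first tie-breaking level; verifying that the threshold ``$c$ sufficiently large in terms of $n$'' suffices for all competing trees simultaneously (a finite comparison for each fixed $n$) is the technical heart of the argument.
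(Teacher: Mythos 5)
Your high-level framework --- rank trees by the maximum weight of a single matching, then break ties by the number of matchings attaining that weight, descending to lower strata as needed --- is exactly the paper's strategy, and your treatment of $\phi_1$ and $\phi_4$ matches the paper's proof in substance. Two execution points on the cases you sketched. For $\phi_1$ with $n$ even, the maximum-weight matching is the (unique) perfect matching, whose degree sum equals $2(n-1)$ in \emph{every} tree that has one, so nothing is optimized at the top level; all trees with a perfect matching tie there with coefficient $1$, trees without one lose outright, and the entire comparison happens at the \sapm{} stratum via Theorem~\ref{thr:even_sapm_wpm}. Your wording (``whose maximum-degree-sum matchings are the strong almost-perfect matchings'') is only correct for $n$ odd. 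For $\phi_2$, bounding a single edge by $\floorfrac{n^2}{4}$ is the easy half; you must also rule out matchings with $k\ge 2$ edges beating this, which the paper does by applying Karamata's inequality (Lemma~\ref{lem:karamata}) to the edge degree-sums $d_1,\dots,d_k$, using $d_i\ge 3$ and $\sum_i d_i\le n+2k-2$. This step is fillable, but your sketch omits it entirely.

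The genuine gap is $\phi_3$. You claim $P_n$ wins already at the leading order because ``any deviation either shortens the matching or unbalances the factors unfavorably.'' That is false: a $(3,1)$-edge contributes the factor $3+1=4$, exactly the same as a $(2,2)$-edge, so a caterpillar obtained from a path by attaching pendant leaves to internal vertices can attain exactly the same maximal product ($3\cdot 4^{(n-3)/2}$ for $n$ odd, $3^2\cdot 4^{(n-4)/2}$ for $n$ even) as the path; the paper exhibits two such trees in Figure~\ref{fig:oneconfigurationtoexplain}. Worse, for $n$ odd the left-hand tree there has \emph{two} matchings attaining the maximum, the same count as $P_n$, so even your first tie-breaking level (number of maximum-weight matchings) does not separate them; the paper must go a stratum deeper, showing that all remaining matchings of such a competitor have weight at most $(3\cdot 4^{(n-3)/2}/2)^c$, which loses to the path's matchings of weight $(3^2\cdot 4^{(n-5)/2})^c$. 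For the other configurations, and for $n$ even, the paper needs a parity argument along the diameter (the first degree-$3$ vertex would have to sit at an even position for the extremal \sapm{} to exist but at an odd position for the perfect matching to exist) to show competitors lose at the leading coefficient or the next stratum. So while your general ``successively lower strata'' lemma is the right safety net, your $\phi_3$ argument as written terminates early on a false premise, and the tie-breaking analysis you would then be forced into is precisely the longest and most delicate part of the paper's proof.
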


\begin{proof}[Proof for {\hyperref[thm:exp_max1]{$\phi_1$}}]
    Recall that $\phi_1(i,j) = c^{i+j}$. Suppose first that $n$ is even.
    For a perfect matching $M$, we must have $$\sum_{e=uv \in M} \left( \deg(u)+\deg(v) \right) =2(n-1).$$
    Thus, for $c \geq 1$, the leading term of $Z_{\phi_1}(T)$ is at most $c^{2n-2}$ (since every tree has at most one perfect matching). The second leading term corresponds to the weight of strong almost-perfect matchings in $T$, which must each have weight $c^{2n-4}$. Because we are taking $c$ sufficiently large, it is enough to recall from Theorem~\ref{thr:even_sapm_wpm} that the balanced wide spider uniquely maximizes the number of {\sapm}s for $n \geq 12$.
    
    If $n$ is odd, the leading term of $Z_{\phi_1}(T)$ is given by {\sapm}s which each have weight $c^{2n-3}$. From Theorem~\ref{thr:odd_sapm}, we know that the odd spider has the maximum number of {\sapm}s when $n \geq 7$.
    \end{proof}
    
    \begin{proof}[Proof for {\hyperref[thm:exp_max2]{$\phi_2$}}]
    To prove the result for $\phi_2(i,j) = c^{ij}$ we use the following claim.
    \begin{claim}
    Let $T$ be a tree of order $n \ge 6$ and $M$ a matching of $T$.
    Then $\sum_{uv \in M} \deg(u)\cdot \deg(v) \le \floorfrac{n^2}{4}$, with equality if and only if $T$ is the balanced double-broom $\DB_{\floorfrac{n}{2}-1,\ceilfrac{n}{2}-1}$ with diameter $3$ and $M$ consists of the central edge.
    \end{claim}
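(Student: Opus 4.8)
The plan is to bound the sum over a single matching $M$ by comparing the degrees of the matched vertices against the total number of edges, which is $n-1$ in a tree. The key observation is that for any matching $M$, the sets of endpoints are disjoint, so the degrees $\deg(u)$ for $u$ covered by $M$ are attached to distinct vertices. I would begin by writing $\sum_{uv \in M}\deg(u)\deg(v)$ and try to show it is maximized by concentrating all the degree on as few edges as possible — ideally a single edge — which pushes us toward the diameter-$3$ structure of a double broom.

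The cleanest route is the following. For each edge $uv \in M$, apply AM--GM in the form $\deg(u)\deg(v) \le \frac{1}{4}(\deg(u)+\deg(v))^2$. Summing over $M$, it would suffice to control $\sum_{uv\in M}(\deg(u)+\deg(v))$. Since $M$ is a matching, the vertices appearing are all distinct, so $\sum_{uv \in M}(\deg(u)+\deg(v)) = \sum_{w \text{ covered}} \deg(w) \le \sum_{w\in V(T)}\deg(w) = 2(n-1)$ by the handshaking lemma. The difficulty is that naively bounding each term by the global degree sum is too lossy when $M$ has several edges; the quadratic $\frac14 x^2$ is \emph{convex}, so for a \emph{fixed} total, spreading the degree across many edges \emph{decreases} the sum, while we want an \emph{upper} bound. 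Thus I would instead argue that to maximize $\sum \deg(u)\deg(v)$, the optimum configuration has $|M|=1$: if $M$ contains two or more edges, I would show one can push the degree mass onto a single edge without decreasing the sum, reducing to the case of a single matched edge $uv$ where $\deg(u)+\deg(v)\le n$ (this last bound holds because $u,v$ together with their private neighbors account for at most $n$ vertices, their shared adjacency being only the edge $uv$). Then $\deg(u)\deg(v)\le \frac{1}{4}(\deg(u)+\deg(v))^2 \le \floorfrac{n^2}{4}$ after taking the integer floor.

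The main obstacle I anticipate is making the ``reduce to a single edge'' step rigorous without a messy exchange argument, and simultaneously tracking the equality case. For equality, both inequalities must be tight: $\deg(u)=\deg(v)$ forces the two non-leaf degrees to be $\lfloor n/2\rfloor$ and $\lceil n/2\rceil$ (equal up to parity), and $\deg(u)+\deg(v)=n$ forces every vertex to be either $u$, $v$, or a leaf adjacent to exactly one of them — which is precisely the balanced double broom of diameter $3$, with $M$ the central edge $uv$. I would verify that in any tree of diameter at least $4$, or with $|M|\ge 2$ genuinely using nontrivial degrees, the bound is strict, so that the balanced double broom with its central edge is the unique maximizer for $n\ge 6$.
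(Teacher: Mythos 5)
Your overall strategy---AM--GM on each matched edge followed by concentrating the degree mass on a single edge---is in fact the same strategy the paper uses, but the concentration step, which you explicitly leave unexecuted, is exactly where the content of the claim lies, and with the inputs you allow yourself it is quantitatively false. The only global constraint you record is the handshaking bound $\sum_{uv\in M}(\deg(u)+\deg(v)) \le 2(n-1)$. If you now ``push mass onto one edge'' at the level of degree-sum sequences (the only way to avoid the tree-exchange argument you want to skip), the extremal configuration for $|M|=k\ge 2$ has $k-1$ edges of degree sum $3$ (the minimum possible for an edge of a tree with $n\ge 3$) and one edge of degree sum $2(n-1)-3(k-1)=2n-3k+1$, which exceeds $n$. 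For $k=2$ this gives the upper bound $\tfrac{9}{4}+\tfrac{(2n-5)^2}{4}$, which is larger than $\tfrac{n^2}{4}$ for \emph{every} $n$ (equivalently, $3n^2-20n+34>0$ always holds). So your per-edge bound $\deg(u)+\deg(v)\le n$, valid for an actual edge of a tree, is not preserved by abstract mass-pushing, and nothing in your argument rules out these bad configurations; making the pushing respect the tree structure is essentially a re-proof of the whole claim.

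The missing idea, which is the paper's key observation, is a refined total bound: the $n-2k$ vertices \emph{not} covered by $M$ each have degree at least $1$, so $\sum_{uv\in M}(\deg(u)+\deg(v)) \le 2(n-1)-(n-2k) = n+2k-2$. With this constraint concentration does work: the worst case is $k-1$ edges of degree sum $3$ and one of degree sum $n-k+1\le n$, and Karamata's inequality (Lemma~\ref{lem:karamata}, applied to $f(x)=x^2$, with $(3,\dots,3,\sum_i d_i-3(k-1))$ majorizing $(d_1,\dots,d_k)$) turns this into $\sum_i d_i^2 \le 9(k-1)+(n-k+1)^2 = n^2-(k-1)(2n-k-8) < n^2-1$ for $k\ge 2$ and $n\ge 6$, which simultaneously yields strictness and thus disposes of the equality case for $|M|\ge 2$. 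Your treatment of the single-edge case ($\deg(u)+\deg(v)\le n$ since $u$ and $v$ share no neighbors, with equality forcing the balanced double broom and $M$ its central edge) is correct and agrees with the paper; but without the refined degree-sum bound, the reduction to that case does not go through.
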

    \begin{claimproof}[Proof of Claim]
        Let $M = \{e_1,\dots, e_k\}$ and for every $i$, let $d_i$ be the sum of the degrees of the endpoints of $e_i$. Assume the edges are ordered such that $d_1 \leq d_2 \leq \cdots \leq d_k$.
        Since $n>2$, we know $d_i\ge 3$ for every $i$.
        There are $n-2k$ vertices not covered by $M$, so by the handshaking lemma, we know that 
        $\sum_{i=1}^k d_i \le 2(n-1)-(n-2k)=n+2k-2.$
        If $k=1$, then $M$ consists of a single edge $uv$ and by the AM-GM inequality,
        $$\deg(u)\deg(v) \le \floorfrac{d_1^2}{4} \le \floorfrac{n^2}{4}.$$
       
        When $k \ge 2$, it suffices to prove that 
        $$\sum_i \floorfrac{d_i^2}{4} \le \floorfrac{n^2}{4}.$$ 
        In this case, we apply Lemma~\ref{lem:karamata}. Consider the sequences 
        $A= (3,3,\dots, 3, \sum_{i=1}^k d_i - 3(k-1))$ with the first $k-1$ terms equal to 3, and $B = (d_1, \dots, d_k)$. Both sequences sum to $\sum_{i=1}^k d_i$ and since $d_i \geq 3$ for all $i$, we have that for any $\ell < k$, the sum of the first $\ell$ terms of $A$ is at most the sum of the first $\ell$ terms of $B$. This means that $A$ majorizes $B$, so we may apply Karamata's inequality with the convex function $f(x) = x^2$
        to get that
        $\sum_{i=1}^k d_i^2 \leq (k-1)3^2+ ((n+2k-2)-3(k-1))^2
        = n^2 -(k-1)(2n-k-8)<n^2-1,$ where we use that $n \geq \max\{6, 2k\}$ and conclude.
        
        Since the last inequality is strict, equality cannot be achieved by any matching of size at least two. In the case $k = 1$, the extremal tree must contain an edge whose endpoints each have degree $\frac{n}{2}$; the balanced double-broom is the unique example.
    \end{claimproof}
    
    Now let $T$ be a tree of order $n$ such that $T \not =\DB_{\floorfrac{n}{2}-1,\ceilfrac n2 -1}$. We know that $|\mathcal M(T)| \leq F_n$, the $n$th Fibonacci number. Thus,
    $$Z_{\phi_2}(T) \le F_n c^{\floorfrac{n^2}{4}-1}<c^{\floorfrac{n^2}{4}} < Z_{\phi_2}\left(\DB_{\floorfrac{n}{2}-1,\ceilfrac n2 -1}\right)$$
    where the second inequality requires our assumption that $c$ is sufficiently large in terms of $n$.
    \end{proof}
    
    \begin{proof}[Proof for {\hyperref[thm:exp_max34]{$\phi_3$}}]
    For $\phi_3(i,j) = (i+j)^c$, the leading term of $Z_{\phi_3}$ does not correspond with a unique graph. We first prove a general claim about maximizing the product of a set of integers, which follows from Karamata's inequality.
    
    \begin{claim}\label{clm:maxprod_whenbalanced}
        Assume integers $a_1,a_2, \ldots, a_k$ sum to $qk+r$, where $0 \le r <k$.
        Then 
        $\prod_{i=1}^k a_i \le (q+1)^r q^{k-r}$ and equality occurs only if $r$ of the $a_i$s equal $q+1$ and $k-r$ of them equal $q.$
    \end{claim}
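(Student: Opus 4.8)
The plan is to pass to logarithms and invoke Karamata's inequality (Lemma~\ref{lem:karamata}) with the strictly concave function $f(x)=\log x$. Since the $a_i$ are positive, the bound $\prod_{i=1}^k a_i \le (q+1)^r q^{k-r}$ is equivalent to $\sum_{i=1}^k \log a_i \le r\log(q+1)+(k-r)\log q$, so it suffices to compare $(a_i)$ against the \emph{balanced sequence} $(b_i)$ having $r$ entries equal to $q+1$ and $k-r$ entries equal to $q$. Concretely, I would sort so that $a_1 \ge a_2 \ge \cdots \ge a_k$ and set $b_1=\cdots=b_r=q+1$, $b_{r+1}=\cdots=b_k=q$; writing $N:=qk+r$, both sequences sum to $N$. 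The claim would then follow once I show that $(a_i)$ majorizes $(b_i)$, since Karamata with the concave $f=\log$ immediately gives $\sum_i \log a_i \le \sum_i \log b_i$, which is exactly the product bound.

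The crux, and the main obstacle, is establishing the partial-sum inequalities $\sum_{i=1}^j a_i \ge \sum_{i=1}^j b_i = jq+\min(j,r)$ for every $j$ (equality at $j=k$ being automatic, as the totals agree). I would argue by contradiction in two cases for $j<k$. If $j\ge r$ and $\sum_{i=1}^j a_i \le jq+r-1$, then $\sum_{i=j+1}^k a_i \ge (k-j)q+1$, so the largest trailing entry $a_{j+1}$ has average exceeding $q$ and hence $a_{j+1}\ge q+1$; by monotonicity $a_1,\dots,a_j\ge q+1$, giving $\sum_{i=1}^j a_i \ge j(q+1)$ and thus $j\le r-1$, contradicting $j\ge r$. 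If instead $j<r$ and $\sum_{i=1}^j a_i \le j(q+1)-1$, then $a_j\le q$, so $a_j,\dots,a_k\le q$, and summing yields $N\le (j(q+1)-1)+(k-j)q = kq+j-1$; since $N=kq+r$ this forces $r\le j-1$, contradicting $j<r$. In both cases the inequality holds, establishing the majorization.

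With the majorization in hand, Karamata (Lemma~\ref{lem:karamata}) delivers the bound, and because $\log$ is \emph{strictly} concave the inequality is strict unless $(a_i)$ and $(b_i)$ coincide as multisets, i.e.\ unless exactly $r$ of the $a_i$ equal $q+1$ and the remaining $k-r$ equal $q$, which is precisely the stated equality condition. I note that positivity of the $a_i$ is needed only to take logarithms, and is harmless since each $a_i$ is a positive integer in the intended application. As a sanity check on the route, the same conclusion follows from a short smoothing argument: whenever two parts differ by at least $2$, replacing $(a_i,a_j)$ by $(a_i-1,a_j+1)$ preserves the sum and strictly increases the product, so the maximum is attained exactly when all parts differ by at most $1$; I would nonetheless present the Karamata version to match the tools already developed in the paper.
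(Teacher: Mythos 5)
Your proof is correct and takes essentially the same approach as the paper's: sort the $a_i$ in decreasing order, verify by contradiction on partial sums that they majorize the balanced sequence with $r$ entries equal to $q+1$ and $k-r$ entries equal to $q$, and apply Karamata's inequality (Lemma~\ref{lem:karamata}) with the concave function $\log$. Your explicit handling of the equality case via strict concavity (and the smoothing remark) only spells out what the paper leaves implicit.
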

    \begin{claimproof}
        Let the $a_i$s be ordered such that $a_1 \ge a_2 \ge \ldots \ge a_k.$
        Then $\sum_{i=1}^{\ell} a_i \ge \ell(q+1)$ for every $\ell \le r.$
        If not, $a_{i} \le q$ for every $i \ge \ell$ and thus 
        $\sum_{i=1}^{k} a_i \le \ell(q+1)-1 + (k-\ell)q<kq+\ell \le kq+r,$ a contradiction.
        Similarly, the sum of the $\ell \le k-r$ smallest values among the $a_i$s is bounded by $\ell q.$
        This implies that the sequence $(a_1,a_2 \ldots, a_k)$ majorizes the sequence
        $$\left(\underbrace{q+1,q+1, \ldots, q+1}_{r },\underbrace{q,q, \ldots, q}_{k-r } \right).$$
        By Lemma~\ref{lem:karamata} applied to the concave function $\log(x),$ we conclude that 
        $\sum_i \log(a_i) \le r\log(q+1)+(k-r) \log(q).$
    \end{claimproof}

    \begin{claim}\label{clm:maxprodsum}
    Let $T$ be a tree of order $n \ge 4$ and $M$ a matching in $T$.
    Then $$\prod_{uv \in M} \left(\deg(u)+\deg(v)\right) \le 
    \begin{cases}
    3^2\cdot 4^{(n-4)/2} & \text{ if n is even,}\\
    3\cdot 4^{(n-3)/2} & \text{ if n is odd.}\\
    \end{cases}$$
    Equality is possible only if $M$ is a perfect matching (for $n$ even) or strong almost-perfect matching (for $n$ odd).
    Moreover, for all other matchings the upper bound can be reduced by a factor of $2$.
    \end{claim}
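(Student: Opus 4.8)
The plan is to reduce the entire statement to the arithmetic Claim~\ref{clm:maxprod_whenbalanced} on maximizing a product of integers with a bounded sum. Write $M=\{u_1v_1,\dots,u_kv_k\}$ and set $d_i=\deg(u_i)+\deg(v_i)$, so the quantity to bound is $\prod_{i=1}^k d_i$. Two elementary observations drive everything. Since $n\ge 3$ and $T$ is connected, no edge joins two leaves, so $d_i\ge 3$ for every $i$. And if $j:=n-2k$ is the number of uncovered vertices, the handshaking lemma gives $\sum_{i=1}^k d_i=2(n-1)-\sum_{w\text{ uncovered}}\deg(w)\le 2n-2-j$, with equality exactly when every uncovered vertex is a leaf.

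First I would fix $j$ (equivalently $k$) and bound the product. Because a product of positive integers with a bounded sum is largest when the sum is maximal and the terms are as balanced as possible, I apply Claim~\ref{clm:maxprod_whenbalanced} to the $k=(n-j)/2$ integers $d_i$ with total at most $2n-2-j$. For $j=0$ ($n$ even, $M$ a perfect matching) this gives $n/2$ terms summing to $2n-2$, whose balanced optimum is two $3$'s and $(n-4)/2$ fours, namely $3^2\cdot 4^{(n-4)/2}$. For $j=1$ with the uncovered vertex a leaf ($n$ odd, $M$ a \sapm) the sum is $2n-3$ over $(n-1)/2$ terms, whose balanced optimum is one $3$ and $(n-3)/2$ fours, namely $3\cdot 4^{(n-3)/2}$. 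These are precisely the stated bounds, which I abbreviate $B_n$, and they are attained, for instance by the perfect matching of $P_n$ (resp. the \sapm of $P_n$ avoiding an endpoint).

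Next I would show these values dominate all other $j$, which proves the global bound and the equality characterization at once. The same balanced computation for larger $j$ yields strictly smaller values: for $n$ even, $j=2$ forces all $d_i=4$, giving $4^{(n-2)/2}=\tfrac{4}{9}B_n$, and the value decreases further as $j$ grows; for $n$ odd, $j\ge 3$ gives at most $\tfrac{5}{12}B_n$. Hence $\prod d_i\le B_n$, with equality only when $j$ is extremal, the uncovered vertices are leaves (so that the degree sum is maximal), and the $d_i$ are balanced; in particular equality forces $M$ to be a perfect matching when $n$ is even and a \sapm when $n$ is odd.

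Finally, the factor-$2$ refinement follows from the same case analysis \emph{almost} uniformly: for $n$ even, every non-perfect matching has $j\ge 2$ and product at most $\tfrac{4}{9}B_n<\tfrac12 B_n$, and for $n$ odd every matching with $j\ge 3$ has product at most $\tfrac{5}{12}B_n<\tfrac12 B_n$. The single delicate case, which I expect to be the main obstacle, is $n$ odd with $j=1$ but the uncovered vertex \emph{not} a leaf: the sum then drops only to $2n-4$, and the balanced bound is $3^2\cdot 4^{(n-5)/2}=\tfrac34 B_n$, a value that is genuinely realized (for example by $P_n$ avoiding its third vertex). Thus the literal factor-$2$ cannot hold for these near-perfect, non-strong matchings; what is true, and all the later weighted-Hosoya application requires, is a uniform estimate $\prod d_i\le \rho\, B_n$ with $\rho=\tfrac34<1$ for every non-extremal matching. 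I would therefore record the honest constant $\tfrac34$ here, noting that any $\rho<1$ makes such matchings lower-order once the edge weights are raised to a large power.
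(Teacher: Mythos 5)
Your argument is essentially correct, and --- more importantly --- your suspicion about the last sentence of the claim is justified: as literally stated, the factor-$2$ refinement is \emph{false} for odd $n$, and your counterexample is valid. In $P_5$ the matching $\{v_1v_2,v_4v_5\}$ avoids only the (non-leaf) middle vertex and has weight $3\cdot 3=9$, which exceeds half of the bound $3\cdot 4=12$; in general the almost-perfect matching of $P_n$ ($n$ odd) avoiding the third vertex has weight $3^2\cdot 4^{(n-5)/2}=\frac{3}{4}\cdot 3\cdot 4^{(n-3)/2}$. The paper's own proof takes a different route from yours: it pairs up \emph{all} $n$ vertices by an arbitrary permutation (so pairs need not be edges), bounds the product of all $\lfloor n/2\rfloor$ pair-sums by Claim~\ref{clm:maxprod_whenbalanced}, and then divides out the pairs not belonging to $M$, each of which has degree-sum at least $2$. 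That division is what produces the factor $2$, and it is sound whenever at least one pair lies outside $M$ --- i.e., for every non-perfect matching when $n$ is even, and for every matching missing at least three vertices when $n$ is odd. But for an almost-perfect matching of an odd tree avoiding a non-leaf vertex, \emph{every} pair is a matching edge and there is nothing to divide by; the paper dismisses this case with ``similar arguments hold \ldots\ when $M$ is not an SAPM,'' which is exactly the gap you located. Writing $n=2k+1$, the only loss there is in the degree sum ($\le 4k-2$ rather than $4k-1$ when the avoided vertex has degree $2$), which gives your factor $3/4$ and nothing better.

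Your proposed repair (a uniform constant $\rho=3/4$ for all non-extremal matchings, with the stronger factor $2$ holding in the cases where the division argument applies) is the right one, and the paper itself implicitly concedes it: in the application to $\phi_3$, the stated leading terms of $Z_{\phi_3}(P_n)$ for odd $n$ include $(k-1)\cdot\bigl(3^2\cdot 4^{k-2}\bigr)^c$, i.e., $k-1$ matchings of weight exactly $\frac34$ of the maximum --- inconsistent with the literal factor-$2$ statement, consistent with your constant. As you note, every later use only needs \emph{some} fixed $\rho<1$ separating non-extremal matchings from the extremal ones, so the applications survive with $\rho = 3/4$. Two loose ends to tighten in your write-up: (i) you assert without proof that the balanced bound decreases as the number $j$ of uncovered vertices grows; this follows by observing that appending a part equal to $2$ converts any configuration of $k-1$ parts summing to $S-2$ into one of $k$ parts summing to $S$, so each increase of $j$ by $2$ costs at least a factor $2$ (this is, in effect, the paper's division argument in reverse); and (ii) you invoke Claim~\ref{clm:maxprod_whenbalanced} with the sum only bounded above rather than exactly prescribed, which is harmless but deserves the one-line remark that the balanced maximum is nondecreasing in the sum.
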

    \begin{claimproof}[Proof of Claim]
        Let $V=\{v_1, \ldots, v_n\}$ be an arbitrary permutation of the vertices and $d_i = \deg(v_{2i-1})+\deg(v_{2i})$ (observe that $v_{2i-1}v_{2i}$ is not necessarily an edge of $T$).
        If $n=2k$ is even, then  $\sum_{i=1}^k d_i = 2n-2 = 4k-2$ by the handshaking lemma and by Claim~\ref{clm:maxprod_whenbalanced}, $\prod_{i=1}^k d_i \leq 3^2 \cdot 4^{k-2}.$
        Equality happens only when all $d_i$ are $3$ or $4$. If $M$ is not a perfect matching and covers vertices $\{v_1, \dots, v_{2m}\}$ (without loss of generality), then $2m < 2k$ so we may divide both sides by $d_k \geq 2$ to get the final part of the claim.
        
        If $n=2k+1$ is odd, then $\sum_{i=1}^k d_i\le 2(n-1)-1=4k-1$ and $\prod_{i=1}^k d_i \le 3\cdot 4^{k-1}$ by Claim~\ref{clm:maxprod_whenbalanced}.
        Similar arguments hold for equality and when $M$ is not an SAPM as in the even case.
    \end{claimproof}
    We say $uv$ is an $(a,b)$-edge if $\deg(u) = a, \deg(v) = b$. To determine which tree achieves equality in the previous claim, a trivial but crucial observation is that $\deg(u)+\deg(v)=4$ if and only if $uv$ is a $(2,2)$- or $(3,1)$-edge.
    The path $P_n$ is a tree that attains equality in Claim~\ref{clm:maxprodsum}, but not the unique one.
    
    First, consider the case where $n=2k+1$ is odd.
    The leading terms for $Z_{\phi_3}(P_n)$ are $2\cdot (3\cdot 4^{k-1})^c+(k-1)\cdot (3^2 \cdot 4^{k-2})^c.$
   
    Let $T \not= P_n$ be a tree of order $n$.
    The (strong) almost-perfect matching(s) for which equality is attained in Claim~\ref{clm:maxprodsum} consist of $(3,1)$- and $(2,2)$-edges with exactly one $(2,1)$-edge.
    Observe that the extremal tree must then consist of a central path (the diameter) with pendant leaves connected to the internal vertices.
    There are at most two choices for an \sapm\ that attains equality in Claim~\ref{clm:maxprodsum}.
    If there is only one $(2,1)$-edge in $T$ (an example is presented in Figure~\ref{fig:oneconfigurationtoexplain} on the left), then there are two leaves with the same neighbor of degree $3$ and hence only $2$ \apm's in total (since an \apm\ must contain exactly one of the two neighboring leaves). 
    That implies whenever $c$ is sufficiently large that
    $$Z_{\phi_3}(T)< 2\cdot (3\cdot 4^{k-1})^c+F_n \cdot (3\cdot 4^{k-1}/2)^c<Z_{\phi_3}(P_n).$$ 
    
    If there are two $(2,1)$-edges in $T$, let $u$ and $v$ be the two leaves at the end of the diameter. Observe that we cannot have perfect matchings of both $T \setminus \{u\}$ and $T \setminus \{v\}$; this is because $T$ has a $(3,1)$-edge (since $T \neq P_n$) which must be contained in both matchings. Deleting the endpoints of the $(3,1)$-edge splits the tree into two components, both of which have odd order in one of $T \setminus \{u\}$ or $T \setminus \{v\}$. 
    So, in this case there is at most one matching for which equality in Claim~\ref{clm:maxprodsum} holds. Thus the leading term in $Z_{\phi_3}(T)$ is smaller than $2\cdot (3\cdot 4^{k-1})^c$ and the result follows again.
    
    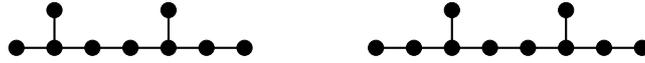
\begin{figure}[ht]

		\begin{center}
		\begin{tikzpicture}[scale=1.4]
			{
			    \draw[thick] (3.5,0) -- (0.5,0);
				\foreach \x in {1,2,...,7}{\draw[fill] (0.5*\x,0) circle (0.05);}
				\foreach \x in {2,5}{\draw[thick] (0.5*\x,0) -- (0.5*\x,0.5);
				\draw[fill] (0.5*\x,0.5) circle (0.05);
				}

			}
			\end{tikzpicture}\quad \hspace{1cm}
			\begin{tikzpicture}[scale=1.4]
			{
			    \draw[thick] (3.5,0) -- (.,0);
				\foreach \x in {0,1,2,...,7}{\draw[fill] (0.5*\x,0) circle (0.05);}
				\foreach \x in {2,5}{\draw[thick] (0.5*\x,0) -- (0.5*\x,0.5);
				\draw[fill] (0.5*\x,0.5) circle (0.05);
				}

			}
			\end{tikzpicture}
		\end{center}
\caption{Trees for which $Z_{\phi_3}(T)$ and $Z_{\phi_3}(P_n)$ have the same leading term}\label{fig:oneconfigurationtoexplain}
\end{figure}
    
    Next, we consider the case where $n$ is even.
    A tree $T$ that attains equality in Claim~\ref{clm:maxprodsum} has maximum degree at most $3$, every vertex of degree $3$ has a neighboring leaf, and $T$ has a perfect matching (see the right hand side of Figure~\ref{fig:oneconfigurationtoexplain} for an example).
    As before, $T$ consists of a diameter with pendant leaves.
    The second largest contribution to $Z_{\phi_3}(T)$ would be due to a strong almost-perfect matching $M$, for which $\prod_{uv \in M} \left(\deg(u)+\deg(v)\right) \le 2^{n-2}$ by Claim~\ref{clm:maxprodsum}, and equality occurs only if the avoided pair are the endpoints of the diameter, and the degree $3$ vertices in $M$ are paired with leaves.
    
    Let the diameter have vertices $v_1, v_2, \dots, v_k$ where $\{v_1, v_k\}$ is the avoided pair, and let $i$ be minimum such that $\deg(v_i) = 3$. In order for $M$ to cover the vertices $v_2, \dots, v_{i-1}$, we must have $i$ even. But $T$ has a perfect matching, and in order for the perfect matching to cover $v_1, v_2, \dots, v_{i-1}$, we must have $i$ odd - a contradiction unless $T$ contains no vertices of degree 3, i.e. $T = P_n$.
    \end{proof}

    \begin{proof}[Proof for {\hyperref[thm:exp_max34]{$\phi_4$}}]
    Finally, to prove the result for $\phi_4(i,j) = (ij)^c$ we use the following claim.
    \begin{claim}
        Let $T$ be a tree on $n \ge 2$ vertices.
        Then the product of the degrees of all vertices is at most $2^{n-2}$, with equality if and only if $T$ is a path $P_n$.
    \end{claim}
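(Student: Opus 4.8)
The plan is to reduce the claim to a statement about the degree sequence alone and then invoke the majorization machinery already developed in the paper. Writing $d_1 \ge d_2 \ge \cdots \ge d_n$ for the sorted degree sequence of $T$, the handshaking lemma gives $\sum_{i=1}^n d_i = 2(n-1)$. The path $P_n$ has sorted degree sequence $(2,2,\dots,2,1,1)$ with $n-2$ twos, and $\prod_i d_i = 2^{n-2}$ for this sequence. So it suffices to show that among all $n$-vertex trees the product of the degrees is maximized exactly by this degree sequence, and that the only tree realizing it is $P_n$.

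First I would show that the sorted degree sequence of $P_n$ is majorized by that of every $n$-vertex tree, by the same argument as in the proof of Theorem~\ref{thr:maxmatch}. For $j \le n-2$ one needs $\sum_{i=1}^j d_i \ge 2j$: if instead $\sum_{i=1}^j d_i < 2j$, then since the $d_i$ are sorted in decreasing order we get $j d_j \le \sum_{i=1}^j d_i < 2j$, forcing $d_j = 1$ and hence $d_i = 1$ for all $i \ge j$; summing then gives $\sum_{i=1}^n d_i < n + j \le 2n-2$, contradicting the handshaking lemma. At $j = n-1$ both partial sums equal $2n-3$ (every tree has a leaf, so $d_n = 1$), and at $j=n$ both equal $2n-2$. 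Thus the tree's degree sequence majorizes the path's. Applying Lemma~\ref{lem:karamata} to the strictly concave function $\log$ then yields $\sum_i \log d_i \le \sum_i \log(\text{path degrees})$, i.e. $\prod_i d_i \le 2^{n-2}$, with the inequality strict unless the two degree sequences coincide.

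For the equality case, strictness of Karamata forces the degree sequence to be $(2,\dots,2,1,1)$, and the only connected graph with exactly two vertices of degree $1$ and all others of degree $2$ is the path $P_n$. The main obstacle is essentially bookkeeping: confirming the partial-sum comparison at the last coordinates (handled above), checking that the factor $\log 1 = 0$ contributed by leaves does not disrupt the Karamata comparison, and verifying that the strictly-concave strictness clause of Lemma~\ref{lem:karamata} applies so that any tree other than $P_n$ is strictly suboptimal.

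A clean fully self-contained alternative avoiding Karamata is induction on $n$. The base case $n=2$ gives $\prod = 1 = 2^0$. For $n \ge 3$, delete a leaf $u$ whose neighbor $v$ has degree $d$; since $T$ is connected with $n\ge 3$ vertices, $d \ge 2$, so in $T' = T\setminus u$ the degree of $v$ drops to $d-1 \ge 1$ and all other degrees are unchanged. Hence $\prod_{w} \deg_T(w) = \tfrac{d}{d-1}\prod_{w} \deg_{T'}(w) \le \tfrac{d}{d-1}\, 2^{\,n-3} \le 2^{\,n-2}$, using the inductive bound on $T'$ and $\tfrac{d}{d-1}\le 2$. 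Equality requires $d = 2$ and $T' = P_{n-1}$ with $v$ an endpoint of that path, which means $T = P_n$.
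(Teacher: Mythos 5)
Your proposal is correct, but both of your arguments differ from the paper's, which is a two-line application of AM--GM: every tree has at least two leaves, so after setting aside two degree-$1$ vertices the handshaking lemma says the remaining $n-2$ degrees sum to $2n-4$, hence their product is at most $2^{n-2}$ by AM--GM, with equality forcing all of them to equal $2$ --- i.e.\ exactly two leaves, and the path is the only tree with exactly two leaves. Your first route (majorizing the path's degree sequence, exactly as in the $k=2$ case of Theorem~\ref{thr:maxmatch}, then applying Lemma~\ref{lem:karamata} to the concave function $\log$) is sound, including the equality analysis via strict concavity and the check that $d_n=1$ makes the partial sums agree at $j=n-1$; but it is heavier machinery than needed, since with the two leaves pinned at degree $1$ the remaining comparison is against a constant sequence, which is precisely the AM--GM/Jensen special case of Karamata. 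Your second route --- induction by deleting a leaf $u$ whose neighbor $v$ has degree $d\ge 2$, using $\prod_T \deg = \frac{d}{d-1}\prod_{T'}\deg \le 2\cdot 2^{n-3}$ --- is genuinely different: it avoids convexity entirely, is fully self-contained, and delivers the equality case ($d=2$, $T'=P_{n-1}$, $v$ an endpoint) cleanly; its only cost is being slightly longer than the paper's one-liner. Either of your arguments would serve as a valid replacement proof.
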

    \begin{claimproof}
        A tree has at least $2$ leaves, so by the handshaking lemma the other $n-2$ vertices have degree $2$ on average.
        The claim follows from the AM-GM inequality and the fact that the path is the only tree with exactly two leaves.
    \end{claimproof}
    As such, the leading term of $Z_{\phi_4}(P_n)$ is $\left( 2^{n-2}\right)^c$, while for any other tree $T$ of order $n$, we have $Z_{\phi_4}(T) \le F_n \left( 3\cdot 2^{n-4}\right)^c,$ which is smaller than $\left( 2^{n-2}\right)^c$ once e.g. $c>2n.$
\end{proof}

\begin{section}
{Further Directions}\label{sec:future}
\end{section}

We conclude with some further open problems.

\subsection{Maximum number of strong $k$-almost-perfect matchings for $k\ge 3$}\label{sec:k-sapm}

We can generalize the notion of an almost-perfect matching to a {\em $k$-almost-perfect matching} (\kapm), a matching which covers all but $k$ vertices. Similarly, a {\em strong $k$-almost-perfect matching} (\ksapm) covers all but $k$ leaves. Observe that $k \in \{1, 2\}$ recovers our original notion of {\apm} and \sapm. For $k \geq 3$, we can consider the question of which trees have the maximum number of {\ksapm}s.

Recall a spider is the unique tree of order $n$ whose legs all have length $2$, except possibly one leg of length $1$ when $n$ is even and a spider-trio is a tree constructed from three spiders whose center vertices are attached to a single new vertex.
In Theorem~\ref{thr:even_sapm_gen}, for $n$ sufficiently large, we showed that the spider-trio maximizes the number of {\sapm}s. 
For $n$ sufficiently large, one could hope that a generalized 
$(k+1)$-spider, presented in Figure~\ref{fig:GoodConstruction_k_sapm} for $k=7$, is extremal.
This might be true for small $k$, but is false for $k \ge 13$. When $k=13$ 
a tree consisting of spiders connected to a path has more {\ksapm}s than the $(k+1)$-spider. Asymptotically, however, the generalized $(k+1)$-spider has the optimal order.

\begin{figure}[ht]

\begin{center}

        \begin{tikzpicture}[x=0.6cm, y=0.6cm]
    {
    \foreach \t in {0,1,2,3,4,5,6,7}{  
	\foreach \x in {0,1,2,3,4}{\draw[thick] (1+2.5*\t,2) -- (0.5*\x+2.5*\t,1);}	
	\foreach \x in {0,1,2,3,4}{\draw[thick] (0.5*\x+2.5*\t,0) -- (0.5*\x+2.5*\t,1);}	
	
    \foreach \x in {0,1,2,3,4}{\draw[fill] (0.5*\x+2.5*\t,1) circle (0.1);}
    \foreach \x in {0,1,2,3,4}{\draw[fill] (0.5*\x+2.5*\t,0) circle (0.1);}
    \draw[fill] (1+2.5*\t,2) circle (0.1);
    }
    
    \foreach \t in {0,1,2,3,4,5,6,7}{  
	\foreach \x in {0,1,2,3,4}{\draw[thick] (1+2.5*\t,2) -- (9.75,3);}	
	}
    
	}

    \draw[fill] (9.75,3) circle (0.1);
    \end{tikzpicture}\\
\end{center}
\caption{A generalized $8$-spider}
\label{fig:GoodConstruction_k_sapm}
\end{figure}

\begin{proposition}\label{lem:spider-leading-term}
    The number of strong $k$-almost-perfect matchings in a tree of order $n$ is $\Theta_k(n^k).$
\end{proposition}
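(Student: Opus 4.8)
The plan is to read the statement as a claim about $g_k(n)$, the maximum number of strong $k$-almost-perfect matchings ($k$-SAPMs) over all trees of order $n$. Since a $k$-SAPM covers $n-k$ vertices by a perfect matching, such matchings exist only when $n \equiv k \pmod 2$; so I would prove the lower bound for $n$ of that parity (for other $n$ the count is simply $0$) while establishing the upper bound $g_k(n) = O_k(n^k)$ unconditionally. The two bounds together give $g_k(n)=\Theta_k(n^k)$, and since $\binom{n}{k} = \Theta_k(n^k)$ the constants match.

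For the upper bound I would count $k$-SAPMs by their avoided sets. A $k$-SAPM is determined by a choice of $k$ leaves $S$ to avoid together with a perfect matching of $T \setminus S$ covering every remaining vertex. A tree has at most $n$ leaves, so there are at most $\binom{n}{k}$ choices of $S$. The key observation is that deleting a set of leaves from a tree leaves a connected graph: any two surviving vertices are joined by a path in $T$ that never uses a leaf as an interior vertex, so that path survives, whence $T \setminus S$ is again a tree. By Lemma~\ref{lem:treepm} it then has at most one perfect matching, and therefore $g_k(n) \le \binom{n}{k} = O_k(n^k)$.

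For the lower bound I would use the generalized spider $T$ of Figure~\ref{fig:GoodConstruction_k_sapm}: take $k+1$ odd spiders $S_0, \dots, S_k$ (each leg of length $2$), with $a_i$ legs in $S_i$, and join all their centers $c_0, \dots, c_k$ to one new vertex $z$. Rather than characterizing all $k$-SAPMs, I would exhibit enough of them directly. Matching $z$ to $c_0$ and completing $S_0$ by pairing each middle vertex with its leaf, then for each $i \ge 1$ choosing one leg of $S_i$, using the center edge into that leg (so the leg's leaf is uncovered) and completing the other legs, produces a matching covering every vertex except one chosen leaf in each of $S_1, \dots, S_k$; this is a valid $k$-SAPM. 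Distinct choices give distinct avoided sets, so the number of $k$-SAPMs is at least $\prod_{i=1}^{k} a_i$. Since $n = (k+2) + 2\sum_{i=0}^{k} a_i$, choosing the $a_i$ as balanced as possible makes each $a_i = \Theta_k(n)$, whence $\prod_{i=1}^{k} a_i = \Omega_k(n^k)$, matching the upper bound.

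Both bounds are individually routine; the only care points are the parity remark and the observation that deleting leaves keeps the tree connected, so that Lemma~\ref{lem:treepm} applies to give at most one perfect matching per avoided set. The main conceptual obstacle, were one to push toward exact extremal trees, would be to characterize \emph{all} $k$-SAPMs of a given tree --- the finer count $\sum_{C}\prod_{i \ne C} a_i$ (obtained by also varying which spider absorbs $z$) follows from the same odd-component argument used for the spider-trio in Theorem~\ref{thr:even_sapm_gen}, but it is unnecessary for the order-of-magnitude statement $\Theta_k(n^k)$, which is exactly what keeps the proof short.
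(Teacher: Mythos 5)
Your proof is correct and follows essentially the same route as the paper: the upper bound via $\binom{n}{k}$ choices of avoided leaf set (you make explicit the uniqueness of the perfect matching of $T\setminus S$, which the paper leaves implicit), and the lower bound via the same $(k+1)$-spider construction of Figure~\ref{fig:GoodConstruction_k_sapm}, counting $\Omega_k(n^k)$ matchings. Your added care about parity and about balancing the spider sizes for arbitrary admissible $n$ is a slight refinement of the paper's argument, not a different approach.
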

\begin{proof}
    The number of {\ksapm}s is bounded by 
    $\binom{n}{k}=O_k(n^k)$.
    For the lower bound, let $T$ be the union of $k+1$ odd spiders
    all of order $2a+1$, whose centers are connected with an additional vertex $v$
    as in Figure~\ref{fig:GoodConstruction_k_sapm}. 
    Then $n=(k+1)(2a+1)+1$ and the number of {\ksapm}s
    in $T$ is $(k+1)a^k \sim \frac{n^k}{2^k(k+1)^{k-1}}=\Omega_k(n^k).$ 
\end{proof}

We can also determine some structural properties that the extremal tree(s) must have.

\begin{proposition}\label{lem:noleavesatdist2ingen}
    If $T$ contains two leaves that share a common neighbor, then the number of strong $k$-almost-perfect matchings is at most $2\binom{n}{k-1}=O(n^{k-1}).$
\end{proposition}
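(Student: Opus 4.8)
The plan is to count strong $k$-almost-perfect matchings by partitioning them according to how they interact with the two sibling leaves. Let $u$ and $u'$ be the two leaves sharing a common neighbor. Since no matching can cover both $u$ and $u'$ (they have the same single neighbor, and a matching edge at that neighbor can only cover one of them), every matching—and in particular every strong $k$-almost-perfect matching—must leave at least one of $u, u'$ uncovered. A \ksapm{} leaves exactly $k$ leaves uncovered, so at least one of $u, u'$ is among these $k$ avoided leaves.

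The key step is to bound the count via the avoided set. A \ksapm{} is determined by (the edge set of) a matching whose set of avoided vertices is precisely a $k$-subset of leaves. So I would count by choosing the avoided set $A$ of size $k$ and asking how many \ksapm{}s avoid exactly $A$; since $T$ is a tree, $T \setminus A$ has at most one perfect matching by Lemma~\ref{lem:treepm}, so each valid $A$ contributes at most one \ksapm. Hence the number of \ksapm{}s is at most the number of $k$-subsets $A$ of leaves such that $A$ contains at least one of $u, u'$. The number of such subsets is at most the number of $k$-subsets containing $u$ plus the number containing $u'$, namely $2\binom{n-1}{k-1} \le 2\binom{n}{k-1}$, since fixing one element of the $k$-subset to be $u$ (respectively $u'$) leaves $\binom{n-1}{k-1}$ choices for the remaining $k-1$ elements. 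This immediately yields the stated bound $2\binom{n}{k-1} = O(n^{k-1})$.

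The main obstacle, though minor, is being careful that this double count is a genuine upper bound rather than overcounting in a way that loses the factor structure. Here the subtlety is actually in our favor: the union bound over ``$A$ contains $u$'' and ``$A$ contains $u'$'' double-counts those \ksapm{}s that avoid \emph{both} $u$ and $u'$, so the true count is if anything smaller, and $2\binom{n}{k-1}$ remains a valid upper bound. I would state explicitly that each avoided set corresponds to at most one \ksapm{} (via uniqueness of perfect matchings in forests), so the injection from \ksapm{}s into admissible avoided sets is what drives the bound; the combinatorial estimate $\binom{n-1}{k-1} \le \binom{n}{k-1}$ then closes it. This makes the asymptotic claim $O(n^{k-1})$ immediate for fixed $k$, completing the proof.
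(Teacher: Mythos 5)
Your proof is correct and takes essentially the same approach as the paper's: both observe that every \ksapm{} must avoid at least one of the two sibling leaves, and then bound the count by $2\binom{n}{k-1}$, the number of ways to choose the remaining $k-1$ avoided leaves in each of the two cases. The uniqueness step you spell out (each avoided set yields at most one matching, by uniqueness of perfect matchings in forests) is the same fact the paper uses implicitly here and makes explicit just afterwards in Proposition~\ref{lem:noedgeinT[N]}.
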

\begin{proof}
    If two leaves, $\ell_1$ and $\ell_2,$ have the same neighbor, then no matching can contain both of them. Thus, in any {\kapm}
    at least one of the two is in the avoided set of $k$ leaves. 
    For both $\ell_1$ and $\ell_2$, the number of {\kapm}s is bounded 
    by the number of ways to choose the remaining $k-1$ leaves to avoid.
\end{proof}

As a consequence, for $n$ sufficiently large, we can partition the vertices
into three sets: the set of leaves $L$, the set of (unique) neighbors of the leaves 
$N := N(L)$ and the set of vertices $X$ that are at distance at least two from any leaf.
Observe that $\lvert L\rvert = \lvert N \lvert$ and $\lvert X \rvert + 2\lvert L \rvert =n$. For a subset of vertices $S$, let $T[S]$ denote the subtree induced by $S$.

\begin{proposition}\label{lem:noedgeinT[N]}
    The number of strong $k$-almost-perfect matchings that contain an edge in $T[N]$ is $O_k(n^{k-1}).$
\end{proposition}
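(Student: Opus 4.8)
The plan is to exploit the rigidity that the partition into $L$, $N$, and $X$ imposes on strong $k$-almost-perfect matchings. First I would record the structural consequence of $|L| = |N|$: the map sending each leaf to its unique neighbor is a bijection onto $N$, so every vertex $v \in N$ has exactly one leaf neighbor, which I denote $\ell(v)$, and $\ell$ is injective. The key observation is then the following. Suppose a \ksapm\ $M$ contains an edge $v_1 v_2$ with both endpoints in $N$. Since $\ell(v_1)$ has $v_1$ as its only neighbor and $v_1$ is already matched to $v_2$ by $M$, the leaf $\ell(v_1)$ cannot be covered by $M$; the same holds for $\ell(v_2)$. Hence both $\ell(v_1)$ and $\ell(v_2)$ lie in the set of $k$ avoided leaves, and they are distinct because $\ell$ is injective. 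So an internal edge of $T[N]$ forces two of the $k$ avoided leaves to be prescribed.

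Second, I would reduce the counting of matchings to the counting of avoided sets. A \ksapm\ is determined by its set of avoided leaves: removing these $k$ leaves from $T$ leaves a forest, and by Lemma~\ref{lem:treepm} each component of a forest has at most one perfect matching, so the perfect matching of the remainder is unique when it exists. Consequently it suffices to bound the number of admissible $k$-subsets of leaves that arise this way.

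Third, I would assemble the count. By the first step, every \ksapm\ containing an edge of $T[N]$ has an avoided set of the form $\{\ell(v_1), \ell(v_2)\} \cup S$, where $v_1 v_2$ is an edge of $T[N]$ contained in the matching and $S$ is a $(k-2)$-subset of the remaining leaves. Since $T[N]$ is a subgraph of a tree it has at most $|N| - 1 < n$ edges, and there are at most $\binom{n}{k-2}$ choices for $S$. Using the uniqueness from the second step, the number of such matchings is therefore at most $n\binom{n}{k-2} = O_k(n^{k-1})$.

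The main obstacle is entirely concentrated in the first step: recognizing that a matched edge inside $T[N]$ leaves both adjacent pendant leaves uncovered, thereby consuming two of the $k$ avoided slots and cutting the number of free choices from $k$ down to $k-2$. Once this is in hand, the edge count of $T[N]$ contributes only a linear factor and the bound $O_k(n^{k-1})$ follows immediately, with no delicate estimates to manage.
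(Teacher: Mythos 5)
Your proof is correct and follows essentially the same route as the paper: both arguments note that a \ksapm{} is determined by its avoided set of leaves, that a matched edge inside $T[N]$ forces the two pendant leaves of its endpoints into the avoided set, and then multiply the at most $n$ choices of edge by the $O_k(n^{k-2})$ choices of the remaining avoided leaves. Your write-up simply makes explicit two points the paper leaves implicit (injectivity of the leaf--neighbor correspondence, and uniqueness of the completing perfect matching via Lemma~\ref{lem:treepm}).
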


\begin{proof}
    A {\ksapm} $M$ is uniquely determined by the avoided set of $k$ leaves.
    If there is an edge in $T[N] \cap M$, then the endpoints have neighbors in $L$ which must also be in the avoided set, together with $k-2$ other leaves.
    There are fewer than $n-1$ edges in $T[N]$ and $O(n^{k-2})$ choices for the other $k-2$ leaves, which results in a total of $O(n^{k-1})$ {\ksapm}s that contain an edge in $T[N]$.
\end{proof}

Thus, there are few {\ksapm}s that contain an edge between two vertices in $N$.
We suspect the extremal trees are spiders connected to a relatively
small $X$, but the structure of $X$ seems difficult to characterize.
Surprisingly, when $T[X]$ is a path, the optimal number of
leaves in each spider is not monotone as the following example illustrates.
 \begin{examp}
    When $T$ is the union of $8$ spiders whose centers are connected with a path, such that $T$ is symmetric, the number of strong $4$-\apm's is maximized when the ratios of their sizes (counted from outer to inner spiders) is approximately $\frac{27}{16} \colon 1 \colon \frac{9}{16} \colon \frac{3}{4}.$
\end{examp}

\begin{figure}[ht]

\begin{center}

    \begin{tikzpicture}[x=0.6cm, y=0.6cm]

    \foreach \t in {0,1,2,3,4,5,6,7}{  
	\foreach \x in {0,1,2}{\draw[thick] (1+2.5*\t,2) -- (\x+2.5*\t,1);}				
	\foreach \x in {0,1,2}{\draw[thick] (\x+2.5*\t,1) -- (\x+2.5*\t,0);}
	
    \foreach \x in {0,1,2}{\draw[fill] (\x+2.5*\t,1) circle (0.1);}
	\foreach \x in {0,1,2}{\draw[fill] (\x+2.5*\t,0) circle (0.1);}
    \draw[fill] (1+2.5*\t,2) circle (0.1);
	}
	
	\draw[thick] (1,2) -- (18.5,2);
	
	\draw [decorate,decoration={brace,amplitude=4pt},xshift=0pt,yshift=0pt] (2,-0.2)--(0,-0.2)  node [black,midway,yshift=-0.3cm]{$a$};
	
	\draw [decorate,decoration={brace,amplitude=4pt},xshift=0pt,yshift=0pt] (4.5,-0.2)--(2.5,-0.2)  node [black,midway,yshift=-0.3cm]{$b$};
	
	\draw [decorate,decoration={brace,amplitude=4pt},xshift=0pt,yshift=0pt] (7,-0.2)--(5,-0.2)  node [black,midway,yshift=-0.3cm]{$c$};
	
	\draw [decorate,decoration={brace,amplitude=4pt},xshift=0pt,yshift=0pt] (9.5,-0.2)--(7.5,-0.2)  node [black,midway,yshift=-0.3cm]{$d$};
	
	\draw [decorate,decoration={brace,amplitude=4pt},xshift=0pt,yshift=0pt] (12,-0.2)--(10,-0.2)  node [black,midway,yshift=-0.3cm]{$d$};
	
	\draw [decorate,decoration={brace,amplitude=4pt},xshift=0pt,yshift=0pt] (14.5,-0.2)--(12.5,-0.2)  node [black,midway,yshift=-0.3cm]{$c$};
	
	\draw [decorate,decoration={brace,amplitude=4pt},xshift=0pt,yshift=0pt] (17,-0.2)--(15,-0.2)  node [black,midway,yshift=-0.3cm]{$b$};
	
	\draw [decorate,decoration={brace,amplitude=4pt},xshift=0pt,yshift=0pt] (19.5,-0.2)--(17.5,-0.2)  node [black,midway,yshift=-0.3cm]{$a$};
	
	\end{tikzpicture}\\
\end{center}
\caption{A construction where $T[X]=P_8$}
\label{fig:8spiders}
\end{figure}
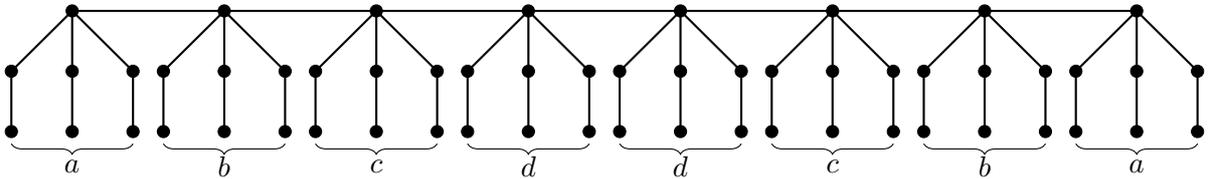

    To see this, let the number of leaves of the $8$ spiders be $a,b,c,d,d,c,b,a$ respectively, as presented in Figure~\ref{fig:8spiders}.
    Then $2(a+b+c+d)=\frac{n-8}{2}.$
    A strong $4$-\apm\ can only be obtained when taking four leaves from different spiders in such a way that the centers of the other four form a matching in the path.
    The total number of choices for these leaves is equal to $(b^2 + (2c + 2d)b + d^2)a^2 + 2c((c + 2d)b + d^2)a + c^2d^2.$
    Using a small computer verification\footnote{\url{https://github.com/StijnCambie/MRC_trees_projects}, document OptimalDistributionBehaviour.}, we conclude that this homogenous multivariate polynomial is maximized when $(a \colon b \colon c \colon d) = \left(\frac{27}{16} \colon 1 \colon \frac{9}{16} \colon \frac{3}{4}\right).$
    
The preceding discussion may provide some intuition to tackle the following question:
\begin{question}
For $k\ge 3$, characterize the trees which maximize the number of strong $k$-almost-perfect matchings.
\end{question}

We can generalize the notion of {\ksapm} even further to that of a {\em connected matching}, which is a matching $M$ in a tree $T$ such that $T[V(M)]$ is a connected graph. 
The notion of connected matching (among some other variants) was defined in~\cite{GHHL05}.
Observe that every {\ksapm} is a connected matching. Conversely, when $n$ is odd, a connected matching with $\frac{n-1}2$ edges is an \sapm.
One could ask similar questions about characterizing the extremal trees for connected matchings. 
It may be interesting to consider these type of questions for connected matchings in general graphs as well, or classes such as $d$-regular graphs which are well-studied in matching theory.

\subsection{Weighted Hosoya index}

Regarding applications to the weighted Hosoya index, it would be interesting to determine the extremal trees for various other choices of degree-based weight function. In particular, can we characterize when the path and the star are the maximizer and minimizer, respectively? 

For example, consider the function 
$$\phi(i,j) = \varphi^{16\lfloor \frac{ij}{16}\rfloor}$$
where $\varphi$ is the golden ratio. While this choice of weight function may seem rather artificial, it is interesting to note that neither $P_n$ nor $S_n$ are extremal; one can check that $Z_\phi(P_n) = F_n \sim \varphi^n$ and $Z_\phi(S_n) \sim (n-1)\varphi^{n-1}$. Here $F_n$ is the $n^{th}$ Fibonacci number.
In fact, a weight of at least approximately $\varphi^{8n/3}$ can be achieved by the construction in Figure~\ref{fig:largeZphi} below.

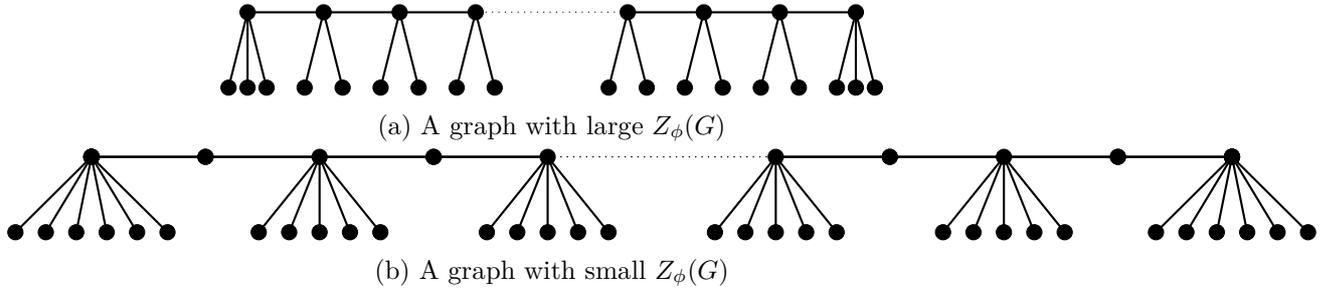
\begin{figure}[ht]
\centering
\begin{minipage}[b]{.9\linewidth}
\begin{center}
    \begin{tikzpicture}[scale=0.7]
    {
	\foreach \x in {0,1,2,3,5,6,7,8}{
	\foreach \a in {-0.25,0.25}{
	\draw[thick] (\x+\a,0) -- (\x,1);
	\draw[fill] (\x+\a,0) circle (0.1);}	\draw[fill] (\x,1) circle (0.1);		
	}
	
    \foreach \x in {0,8}{
    \draw[fill] (\x,0) circle (0.1);
    \draw[thick] (\x,0) -- (\x,1);}
	\draw[thick] (0,1) -- (3,1);
	\draw[dotted] (5,1) -- (3,1);
	\draw[thick] (8,1) -- (5,1);		
	}
	\end{tikzpicture}\\
\subcaption{A graph with large $Z_{\phi}(G)$}
\label{fig:largeZphi}
\end{center}
\end{minipage}\quad\begin{minipage}[b]{.9\linewidth}

\begin{center}
    \begin{tikzpicture}[scale=0.7]
    {
	\foreach \x in {2.4,4.8,7.2,9.6}{
	\foreach \a in {-0.8,-0.4,0,0.4,0.8}{
	\draw[thick] (\x+\a,0) -- (\x,1);
	\draw[fill] (\x+\a,0) circle (0.1);}	\draw[fill] (\x,1) circle (0.1);		
	}
    \foreach \x in {0,12}{
    \foreach \a in {-1,-0.6,...,0.6,1}{
    \draw[fill] (\x,1) circle (0.1);
	\draw[thick] (\x+\a,0) -- (\x,1);
	\draw[fill] (\x+\a,0) circle (0.1);
	}
	\foreach \x in {1,2,5,4}{
	\draw[fill] (2.4*\x-1.2,1) circle (0.1);
	}
	\draw[thick] (12,1) -- (7.2,1);
	\draw[dotted] (4.8,1) -- (7.2,1);
	\draw[thick] (4.8,1) -- (0,1);
	}}
	\end{tikzpicture}\\
\subcaption{A graph with small $Z_{\phi}(G)$}
\label{fig:smallZphi}
\end{center}
\end{minipage}
\caption{Two trees indicating that $P_n$ and $S_n$ are not extremal}\label{fig:2treesforZphi}
\end{figure}

On the other hand, a weight of at most $8^{n/7}< \varphi^{3n/4}$ is given by the construction in Figure~\ref{fig:smallZphi}.

One might try to generalize the results of Section~\ref{sec:HosoyaIndex} to answer the following:
\begin{question}
Can we fully characterize the class of functions $\phi$ for which $Z_\phi(T)$ is minimized by the star, or maximized by the path?
\end{question}

\paragraph{Acknowledgments} The authors would like to express their gratitude to the American Mathematical Society for organizing the Mathematics Research Community workshops where this
work began, in the workshop ``Trees in Many Contexts,'' and to Wanda Payne for contributing to the initial discussions. This event was supported by the National Science Foundation under Grant Number DMS $1916439$. 

The first author has been supported by the Institute for Basic Science (IBS-R029-C4) and a postdoctoral fellowship by the Research Foundation Flanders (FWO) with grant number 1225224N. The fourth author was supported by the Knut and Alice Wallenberg Foundation (KAW 2017.0112) and the Swedish research council (VR), grant  2022-04030.




\newpage

\noindent
{\bf Stijn Cambie}\\
Department of Computer Science,\\
KU Leuven Campus Kulak-Kortrijk,\\
Kortrijk, Belgium\\
E-mail: {\it stijn.cambie@hotmail.com}

\vspace{0.1cc}

\noindent
{\bf Bradley McCoy}\\
Department of Computer Science,\\
James Madison University,\\
Harrisonburg, VA, USA\\
E-mail: {\it mccoy2ba@jmu.edu}

\vspace{0.1cc}

\noindent
{\bf Gunjan Sharma}\\
Applied Mathematics,\\
Illinois Institute of Technology,\\
Chicago, IL, USA\\
E-mail: {\it gsharma7@hawk.iit.edu}

\vspace{0.1cc}

\noindent
{\bf Stephan Wagner}\\
Institute of Discrete Mathematics,\\
TU Graz,\\
Graz, Austria\\
and\\
Department of Mathematics,\\
Uppsala University,\\
Uppsala, Sweden\\
E-mail: {\it stephan.wagner@tugraz.at} 

\vspace{0.1cc}

\noindent
{\bf Corrine Yap}\\
School of Mathematics,\\
Georgia Institute of Technology,\\
Atlanta, GA, USA\\
E-mail: {\it math@corrineyap.com}

\end{document}